\definecolor{lightgray}{rgb}{0.8, 0.8, 0.8}
\definecolor{darkgray}{rgb}{0.7, 0.7, 0.7}
\newcounter{todocounter}
\newcommand{\minisec}[1]{\noindent{\sc #1.}}
\theoremstyle{plain}
\newtheorem{theorem}{Theorem}[section]
\newtheorem{proposition}[theorem]{Proposition}
\newtheorem*{egf*}{Exponential Growth Formula}
\newtheorem*{thm-xi-grs-main*}{Theorem~\ref{thm-xi-grs-main}}
\theoremstyle{definition}
\newfont{\footsc}{cmcsc10 at 8truept}
\newfont{\footbf}{cmbx10 at 8truept}
\newfont{\footrm}{cmr10 at 10truept}
\renewenvironment{abstract}{
	\begin{list}{}%
	{\setlength{\rightmargin}{1in}%
	\setlength{\leftmargin}{1in}}%
	\item[]\ignorespaces\begin{small}}%
	{\end{small}\unskip\end{list}%
}
\newcommand{\Av}{\operatorname{Av}}
\newcommand{\Sub}{\operatorname{Sub}}
\newcommand{\PropSub}{\Sub^{<}}
\newcommand{\C}{\mathcal{C}}
\renewcommand{\O}{\mathcal{O}}
\newcommand{\gr}{\mathrm{gr}}
\newcommand{\lgr}{\underline{\gr}}
\newcommand{\ugr}{\overline{\gr}}
\newcommand{\st}{\::\:}
\newcommand{\ml}{\operatorname{ml}}
\newcommand\mybullet{\raisebox{-5pt}{\normalsize \ensuremath{\bullet}}}
\newcommand\mycirc{\raisebox{-5pt}{\normalsize \ensuremath{\circ}}}
\def\absdot{\@ifnextchar[{\@absdotlabel}{\@absdotnolabel}}
	\def\@absdotlabel[#1]#2{%
		\node at #2 {\normalsize \mybullet};
		\node at #2 [below=2pt] {\ensuremath{#1}};
	}
	\def\@absdotnolabel#1{%
		\node at #1 {\normalsize \mybullet};
	}
\def\absdothollow{\@ifnextchar[{\@absdothollowlabel}{\@absdothollownolabel}}
	\def\@absdothollowlabel[#1]#2{%
		\node at #2 {\normalsize \textcolor{white}{\mybullet}};
		\node at #2 {\normalsize \mycirc};
		\node at #2 [below=2pt] {\ensuremath{#1}};
	}
	\def\@absdothollownolabel#1{%
		\node at #1 {\normalsize \textcolor{white}{\mybullet}};
		\node at #1 {\normalsize \mycirc};
	}
\newcommand{\plotperm}[1]{
	\foreach \j [count=\i] in {#1} {
		\absdot{(\i,\j)};
	};
}
\newcommand{\plotpartialperm}[1]{
	\foreach \i/\j in {#1} {
		\absdot{(\i,\j)};
	};
}
\newcommand{\plotpermbox}[4]{
	\draw [darkgray, very thick, rounded corners=0.01, line cap=round]
		({#1-0.5}, {#2-0.5}) rectangle ({#3+0.5}, {#4+0.5});
}
\newcommand{\plotpermgraph}[1]{
	\foreach \j [count=\i] in {#1} {
		\foreach \b [count=\a] in {#1} {
			\ifthenelse{\a<\i \AND \b>\j}{\draw (\a,\b)--(\i,\j);}{}
		};
	};
	\plotperm{#1};
}
\newcommand{\plotpermdyckpath}[1]{
	\draw [ultra thick, rounded corners=0.01, line cap=round] (0.5,0.5)
	\foreach \step in {#1} {
		\ifnum\step=1
			-- ++(0,1)
		\else
			-- ++(1,0)
		\fi
	};
}
\newcommand{\matrixpermwithzeros}[2]{
	\foreach \y [count=\x] in {#2} {
		\foreach \j in {1, 2, ..., #1} {
			\ifthenelse{\j=\y}{
				\node at (\x,\j) {\tiny $\mathbf{1}$};
			}{
				\node at (\x,\j) {\textcolor{darkgray}{\tiny $0$}};
			}
		}
	}
}
\newcommand{\plotdyckpath}[1]{
	\draw[ultra thick, line cap=round] (0.5,0)
	\foreach \step in {#1} {
		\ifnum\step=1
			-- ++(1,1)
		\else
			-- ++(1,-1)
		\fi
	};
}
\newcommand{\arcskinnyplain}[2]{
	\draw (#1,0) arc (180:0:{(#2-#1)/2});
}
\newcommand{\matchsmall}[1]{
	\begin{tikzpicture}[scale=.1, anchor=base]
		\def\h{0};
		\def\maxh{0};
		\foreach \i/\j in {#1} {
			\pgfmathparse{\j-\i};
			\let\h\pgfmathresult;
			\pgfmathifthenelse{\h>\maxh}{\h}{\maxh};
			\global\let\maxh\pgfmathresult;
		};
		\pgftransformyscale{{4.5/\maxh}};
		\foreach \i/\j in {#1} {
			\arcskinnyplain{\i}{\j};
		};
	\end{tikzpicture}
}
\newcommand{\matchpermsmall}[1]{
	\begin{tikzpicture}[scale=.1, anchor=base]
		\foreach \j [count=\n] in {#1} {};
		\def\h{0};
		\def\maxh{0};
		\foreach \j [count=\i] in {#1} {
			\pgfmathparse{2*\n+1-\j-\i};
			\let\h\pgfmathresult;
			\pgfmathifthenelse{\h>\maxh}{\h}{\maxh};
			\global\let\maxh\pgfmathresult;
		};
		\pgftransformyscale{{4.5/\maxh}};
		\foreach \j [count=\i] in {#1} {
			\arcskinnyplain{\i}{{2*\n+1-\j}};
		};
	\end{tikzpicture}
}
\newcommand{\plotpinsequence}[1]{
	\absdot{(0,0)}{};
	\edef\n{0}
	\edef\s{0}
	\edef\e{0}
	\edef\w{0}
	\edef\x{0}
	\edef\y{0}
	\foreach \pin [remember=\pin as \oldpin (initially 1), count=\i] in {#1} {
		\ifthenelse{\pin=1 \OR \pin=2}{
			\ifthenelse{\oldpin=3}{
				\xdef\x{\number\numexpr\e-1}
			}{
				\xdef\x{\number\numexpr\w+1}
			}
			\ifnum\i=1 
				\pgfmathparse{\e+1}
 				\xdef\e{\pgfmathresult}
			\fi	
		}{ 
			\ifthenelse{\oldpin=1}{
				\xdef\y{\number\numexpr\n-1}
			}{
				\xdef\y{\number\numexpr\s+1}
			}
			\ifnum\i=1 
				\pgfmathparse{\s-1}
 				\xdef\s{\pgfmathresult}
			\fi	
		}
		\ifnum\pin=1 
			\pgfmathparse{\n+2}
 			\xdef\n{\pgfmathresult}		
			\absdot{(\x,\n)}{};
			\ifnum\i>1
				\draw (\x,\n) -- (\x,\y-0.5);
			\else
			\fi
		\fi
		\ifnum\pin=2 
			\pgfmathparse{\s-2}
 			\xdef\s{\pgfmathresult}
			\absdot{(\x,\s)}{};
			\ifnum\i>1
				\draw (\x,\s) -- (\x,\y+0.5);
			\else
			\fi
		\fi
		\ifnum\pin=3 
			\pgfmathparse{\e+2}
 			\xdef\e{\pgfmathresult}
			\absdot{(\e,\y)}{};
			\ifnum\i>1
				\draw (\e,\y) -- (\x-0.5,\y);
			\else
			\fi
		\fi
		\ifnum\pin=4 
			\pgfmathparse{\w-2}
 			\xdef\w{\pgfmathresult}
			\absdot{(\w,\y)}{};
			\ifnum\i>1
				\draw (\w,\y) -- (\x+0.5,\y);
			\else
			\fi
		\fi		
	};
}
\newcommand{\gridsmallhoriz}[1]{
	\begin{tikzpicture}[scale=1, anchor=base]
    \def\gridheight{1};
    \foreach \dir [count=\gridwidth] in {#1} {
    };
	  \pgftransformxscale{{0.225/\gridwidth}};
		\pgftransformyscale{{0.225/\gridheight}};
    \foreach \dir [count=\i] in {#1} {
      \ifthenelse{\dir>0}{
		    \draw [semithick, line cap=round] ({\i-1}, 0)--(\i, 1);
      }{
        \draw [semithick, line cap=round] ({\i-1}, 1)--(\i, 0);
      };
    };
  \end{tikzpicture}
}
\newcommand{\gridhoriz}[1]{
	\begin{tikzpicture}[scale=1, anchor=base]
	  \pgftransformxscale{0.225};
		\pgftransformyscale{0.225};
    \foreach \dir [count=\i] in {#1} {
      \ifthenelse{\dir>0}{
		    \draw [semithick, line cap=round] ({\i-1}, 0)--(\i, 1);
      }{
        \draw [semithick, line cap=round] ({\i-1}, 1)--(\i, 0);
      };
    };
  \end{tikzpicture}
}
\newcommand{\gridsmallvert}[1]{
	\begin{tikzpicture}[scale=1, anchor=base]
    \def\gridwidth{1};
    \foreach \dir [count=\gridheight] in {#1} {
    };
	  \pgftransformxscale{{0.225/\gridwidth}};
		\pgftransformyscale{{0.225/\gridheight}};
    \foreach \dir [count=\i] in {#1} {
      \ifthenelse{\dir>0}{
		    \draw [semithick, line cap=round] (0, {\i-1})--(1, \i);
      }{
        \draw [semithick, line cap=round] (0, \i)--(1, {\i-1});
      };
    };
  \end{tikzpicture}
}
\newcommand{\gridverysmallvert}[1]{
	\begin{tikzpicture}[scale=1, anchor=base]
    \def\gridwidth{1};
    \foreach \dir [count=\gridheight] in {#1} {
    };
	  \pgftransformxscale{{0.175/\gridwidth}};
		\pgftransformyscale{{0.175/\gridheight}};
    \foreach \dir [count=\i] in {#1} {
      \ifthenelse{\dir>0}{
		    \draw [semithick, line cap=round] (0, {\i-1})--(1, \i);
      }{
        \draw [semithick, line cap=round] (0, \i)--(1, {\i-1});
      };
    };
  \end{tikzpicture}
}
\title{\sc Growth Rates of Permutation Classes:\\ Categorization up to the Uncountability Threshold}
\author{%
\begin{tabular}{ccc}
Jay Pantone\footnotemark[1]
&\rule{3pt}{0pt}&
Vincent Vatter\footnote{Both authors were partially supported by the National Science Foundation under Grant Number DMS-1301692.}\\[-0.25ex]
\small Department of Mathematics, Statistics, and Computer Science
&&
\small Department of Mathematics\\[-0.5ex]
\small Marquette University
&&
\small University of Florida\\[-0.5ex]
\small Milwaukee, WI USA
&&
\small Gainesville, Florida USA\\[-1.5ex]
\end{tabular}
}
\titleformat{\section}{\large\sc}{\thesection.}{1em}{}
\date{}
\begin{document}
\maketitle

\pagestyle{main}

\begin{abstract}
In the antecedent paper to this it was established that there is an algebraic number $\xi\approx 2.30522$ such that while there are uncountably many growth rates of permutation classes arbitrarily close to $\xi$, there are only countably many less than $\xi$. Here we provide a complete characterization of the growth rates less than $\xi$. In particular, this classification establishes that $\xi$ is the least accumulation point from above of growth rates and that all growth rates less than or equal to $\xi$ are achieved by finitely based classes. A significant part of this classification is achieved via a reconstruction result for sum indecomposable permutations. We conclude by refuting a suggestion of Klazar, showing that $\xi$ is an accumulation point from above of growth rates of finitely based permutation classes.
\end{abstract}

\section{Introduction}

We are concerned here with the problem of determining the complete list of all growth rates of permutation classes. To be concrete, a \emph{permutation class} is a downset of permutations under the \emph{containment order}, in which $\sigma$ is \emph{contained} in $\pi$ if $\pi$ has a (not necessarily consecutive) subsequence that is \emph{order isomorphic} to $\sigma$ (i.e., has the same relative comparisons). If $\sigma$ is contained in $\pi$ then we write $\sigma\le\pi$; otherwise we say that $\pi$ \emph{avoids} $\sigma$.

Given a permutation class $\C$ we denote by $\C_n$ the subset of $\C$ consisting of its members of length $n$ (we think of permutations in one-line notation, so \emph{length} means the number of symbols). The Marcus--Tardos Theorem~\cite{marcus:excluded-permut:} (formerly the Stanley--Wilf Conjecture) shows that for every proper permutation class $\C$ (meaning, every class except that containing all permutations), the cardinalities $|\C_n|$ grow at most exponentially. Thus the \emph{upper} and \emph{lower growth rates} of the permutation class $\C$, defined respectively by
\[
	\ugr(\C) = \limsup_{n\rightarrow\infty}\sqrt[n]{|\C_n|}
	\quad\mbox{and}\quad
	\lgr(\C) = \liminf_{n\rightarrow\infty}\sqrt[n]{|\C_n|}
\]
are finite for every proper permutation class $\C$. When these two quantities are equal (which is conjectured to hold for all classes and is known to hold for all classes in this work) we denote their common value by $\gr(\C)$ and call it the \emph{growth rate} of $\C$. For a thorough introduction to permutation classes, including all of the notions reviewed here, we refer the reader to the second author's survey~\cite{vatter:permutation-cla:} in the \emph{CRC Handbook of Enumerative Combinatorics}.

Work on determining the set of growth rates of permutation classes has identified several notable phase transitions where both the set of growth rates and the corresponding permutation classes undergo dramatic changes:
\begin{itemize}
\item Kaiser and Klazar~\cite{kaiser:on-growth-rates:} showed that $2$ is the least accumulation point of growth rates and determined all growth rates below $2$. They also showed that every class of growth rate less than the golden ratio $\varphi$ has eventually polynomial enumeration, and thus growth rate $0$ or $1$.
\item Vatter~\cite{vatter:small-permutati:} established that there are uncountably many permutation classes of growth rate $\kappa\approx 2.20557$ (a specific algebraic integer), but only countably many of growth rate less than $\kappa$. In the same paper, all growth rates under $\kappa$ are characterized. This characterization shows that there is a sequence of accumulation points of growth rates which themselves accumulate at $\kappa$, making it the least second-order accumulation point of growth rates.
\item Albert, Ru\v{s}kuc, and Vatter~\cite{albert:inflations-of-g:} established that every class of growth rate less than $\kappa$ has a rational generating function, while there are (by an elementary counting argument) classes of growth rate $\kappa$ with non-rational (and even non-D-finite) generating functions.
\item Bevan~\cite{bevan:intervals-of-pe:}, refining work of Albert and Linton~\cite{albert:growing-at-a-pe:} and Vatter~\cite{vatter:permutation-cla:lambda:}, established that the set of growth rates contains every real number above $\lambda_B\approx 2.35698$ and that the set contains an infinite sequence of intervals whose infimum is $\theta_B\approx 2.355256$. (Both $\lambda_B$ and $\theta_B$ are specific algebraic integers.)
\end{itemize}

\begin{figure}
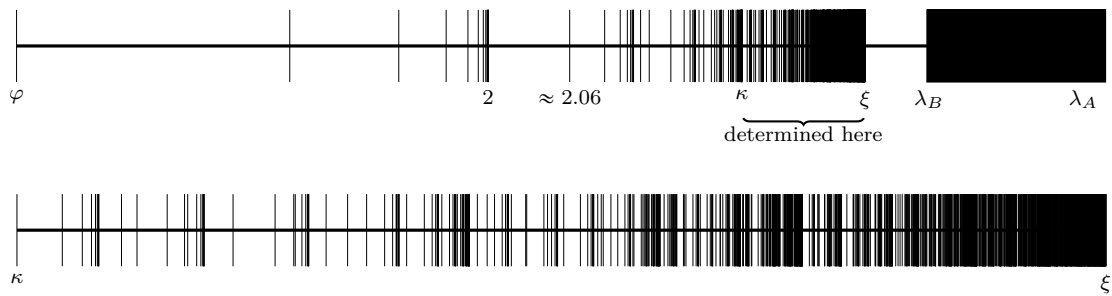

\begin{footnotesize}
\begin{center}
	\input{gr-graph-phi-to-2p5.tex}
	\\[10pt]
	\input{gr-graph-kappa-to-xi.tex}
\end{center}
\end{footnotesize}
\caption{The set of all growth rates of permutation classes between the golden ratio $\varphi$ and $2.5$, as presently known, including the results of this paper. Each growth rate is represented by a thin line, and thus the seemingly thicker lines indicate accumulation points of growth rates. The area to the right of $\lambda_B$ is completely black because all real numbers at least $\lambda_B$ are known to be growth rates of permutation classes.}
\label{fig-set-of-growth-rates}
\end{figure}

Most recently, in the antecedent paper to this, Vatter~\cite{vatter:growth-rates-of:} established that while there are uncountably many growth rates of permutation classes in every neighborhood of the algebraic integer
\[
	\xi=\mbox{the unique positive root of $x^5-2x^4-x^2-x-1$}\approx 2.30522,
\]
there are only countably many growth rates under $\xi$. In this work we completely determine these growth rates. The known set of growth rates of permutation classes, including those characterized here, is shown in Figure~\ref{fig-set-of-growth-rates}.

While our work builds on \cite{vatter:growth-rates-of:}, we use only one of its results. To state this result we need a few definitions. First, the \emph{(direct) sum} of the permutations $\pi$ of length $k$ and $\sigma$ of length $\ell$ is defined by
\[
	(\pi\oplus\sigma)(i)
	=
	\left\{\begin{array}{ll}
	\pi(i)&\mbox{for $i\in[1,k]$},\\
	\sigma(i-k)+k&\mbox{for $i\in[k+1,k+\ell]$}.
	\end{array}\right.
\]
The sum of $\pi$ and $\sigma$ is shown pictorially on the left of Figure~\ref{fig-sums}. The analogous operation depicted on the right of Figure~\ref{fig-sums} is called the \emph{skew sum} and denoted $\sigma\ominus\pi$. The permutation $\pi$ is said to be \emph{sum indecomposable} if it cannot be written as the sum of two nonempty permutations. Otherwise, $\pi$ is called \emph{sum decomposable} and in this case we can write $\pi$ uniquely as $\alpha_1\oplus\cdots\oplus\alpha_k$ where the $\alpha_i$ are sum indecomposable; in this case the $\alpha_i$ are called the \emph{sum components} of $\pi$.

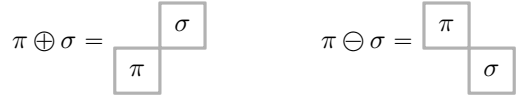
\begin{figure}
\begin{center}
	$\pi\oplus\sigma=$
	\begin{tikzpicture}[baseline=(current bounding box.center), scale=0.2]
		\plotpermbox{0}{0}{2}{2};
		\plotpermbox{3}{3}{5}{5};
		\node at (1,1) {$\pi$};
		\node at (4,4) {$\sigma$};
	\end{tikzpicture}
\quad\quad\quad\quad
	$\pi\ominus\sigma=$
	\begin{tikzpicture}[baseline=(current bounding box.center), scale=0.2]
		\plotpermbox{0}{3}{2}{5};
		\plotpermbox{3}{0}{5}{2};
		\node at (1,4) {$\pi$};
		\node at (4,1) {$\sigma$};
	\end{tikzpicture}
\end{center}
\caption{The sum and skew sum operations.}
\label{fig-sums}
\end{figure}

We say that the class $\C$ is \emph{sum closed} if $\pi\oplus\sigma\in C$ for all $\pi,\sigma\in\C$. Given a permutation class $\C$, its \emph{sum~closure}, denoted by $\bigoplus\C$, is the smallest sum closed permutation class containing $\C$. Equivalently,
\[
	\bigoplus\C
	=
	\{\pi_1\oplus\pi_2\oplus\cdots\oplus\pi_k : \mbox{$\pi_i\in\C$ for all $i$}\}.
\]
It follows from the supermultiplicative version of Fekete's Lemma that every sum closed permutation class has a proper growth rate (a fact first observed by Arratia~\cite{arratia:on-the-stanley-:}). The only result we use from \cite{vatter:growth-rates-of:} is the following.

\begin{theorem}[Vatter~{\cite[Theorem 9.7]{vatter:growth-rates-of:}}]
\label{thm-xi-main}
There are only countably many growth rates of permutation classes below $\xi$ but uncountably many growth rates in every open neighborhood of it. Moreover, every growth rate of a permutation class less than $\xi$ is achieved by a sum closed permutation class.
\end{theorem}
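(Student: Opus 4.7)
My plan is to prove the three assertions together through a structural classification of sum closed classes with growth rate below $\xi$. The starting observation is that every permutation $\pi$ in a class $\C$ decomposes uniquely as $\pi = \alpha_1 \oplus \cdots \oplus \alpha_k$ with each $\alpha_i$ sum indecomposable and thus lying in the set $\C^{\mathrm{si}} \subseteq \C$ of sum indecomposable members of $\C$. Writing $f_\C(x) = \sum_{n \geq 1} |\C^{\mathrm{si}}_n|\, x^n$ one obtains
\[
	\sum_{n \geq 0} |\C_n|\, x^n \;\leq\; \frac{1}{1 - f_\C(x)},
\]
and hence $\gr(\C) \leq 1/\rho_\C$ where $\rho_\C$ is the smallest positive root of $f_\C(\rho) = 1$. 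If $\C$ were sum closed this inequality would be an equality, which alone would establish the last assertion of the theorem. In general $\bigoplus \C^{\mathrm{si}}$ need not be contained in $\C$, and the sum closed realizer of $\gr(\C)$ must be chosen as $\bigoplus S$ for a suitably mutually compatible subset $S \subseteq \C^{\mathrm{si}}$.

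The heart of the argument is therefore a structural classification: I would aim to prove that any class with growth rate less than $\xi$ has the bulk of its sum indecomposable elements lying in one of countably many canonical families, likely subclasses of geometric grid classes or staircase constructions, since these are what drive growth below $\xi$ in the known classifications at $2$ and $\kappa$. Within such a canonical family one can extract $S \subseteq \C^{\mathrm{si}}$ with $\bigoplus S \subseteq \C$ and $\gr(\bigoplus S) = \gr(\C)$, yielding a sum closed realizer. Countability of the growth rates below $\xi$ then follows because only countably many canonical families arise, each producing only countably many generating function equations, and hence only countably many roots below $\xi$.

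For uncountability in every neighborhood of $\xi$, I would construct an explicit family $\{\D_T : T \subseteq \mathbb{N}\}$ of sum closed classes whose growth rates accumulate at $\xi$. The subset $T$ is to be encoded in the lengths of the sum indecomposable atoms used to generate $\D_T$, chosen so that the dominant singularity of $f_{\D_T}$ depends sensitively on $T$. A careful asymptotic analysis should then show that distinct infinite subsets $T$ give distinct growth rates, yielding uncountably many values in every open neighborhood of $\xi$.

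The main obstacle will be the structural classification underpinning countability: showing that $\gr(\C) < \xi$ forces the sum indecomposables of $\C$ into rigid canonical families. This is delicate because multiple natural families have growth rates accumulating at $\xi$ from below; the polynomial $x^5 - 2x^4 - x^2 - x - 1$ defining $\xi$ hints at a five-fold interaction between basic building blocks, and isolating which interactions stay below $\xi$ while ruling out all others will require the bulk of the combinatorial work.
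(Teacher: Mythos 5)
The theorem you are attempting to prove is not established in the present paper at all: it is imported wholesale from the antecedent work of Vatter~\cite{vatter:growth-rates-of:} (his Theorem~9.7), and the paper explicitly says this is the only result it uses from that source. So there is no ``paper's own proof'' of the countability and sum-closed-realizer assertions here to compare against; the only part the present paper re-derives is the uncountability claim, via Propositions~\ref{prop-xi-uncountable} and \ref{prop-xi-open-neighborhood} in Section~\ref{sec-xi}.

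Where your sketch does overlap with the paper is the uncountability argument, and there you are on the right track in spirit: the paper realizes the family $\{\D_T\}$ you describe as the sum closed classes sandwiched between $\bigoplus\PropSub(U^o)$ and $\bigoplus\Sub(U^o)$, where $U^o$ is the antichain of inflated odd-length increasing oscillations; varying which antichain members are retained gives generating functions $1/(1-\sum s_n x^n)$ with $(1,1,2,3,4^\infty)\preceq(s_n)\preceq(1,1,2,3,4,4,5,4,5,4,\dots)$, and the analytic lemma (Proposition~\ref{prop-xi-uncountable}) shows distinct sequences give distinct growth rates. Your plan to ``encode $T$ in the lengths of the sum indecomposable atoms'' is exactly this, though you leave the explicit construction and the separating analytic estimate unspecified.

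The real gap is in the countability and sum-closed-realizer parts, which you identify as the ``heart of the argument'' and then describe only as a goal (``I would aim to prove that any class with growth rate less than $\xi$ has the bulk of its sum indecomposable elements lying in one of countably many canonical families''). This is precisely the hard part: it is the subject of the entire antecedent paper, and involves a long chain of structure theory (substitution decompositions, grid classes, slicings, well quasi-order arguments, and more). Gesturing at ``geometric grid classes or staircase constructions'' does not supply that classification. Without it you have neither countability nor the sum-closed realizer. There is also a small confusion in your framing: you write that if $\C$ is sum closed the generating-function inequality becomes an equality, ``which alone would establish the last assertion of the theorem.'' But the last assertion is about \emph{all} classes of growth rate below $\xi$, not just sum closed ones; the statement is trivially true for a class that is already sum closed (it realizes its own growth rate), and the nontrivial content is precisely the case where $\C$ is not sum closed, which your reduction does not resolve.
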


Every permutation class can be specified by a set of permutations that its members avoid, i.e., as
\[
	\Av(B)=\{\pi\st\pi\mbox{ avoids all $\beta\in B$}\}.
\]
Indeed, we may take $B$ to be an antichain (a set of pairwise incomparable elements), and in this case $B$ is unique and is called the \emph{basis} of $\C$. Note that a class is sum closed if and only if all of its basis elements are sum indecomposable. One can also define a permutation class as the \emph{downward closure} of a set $X$ of permutations, i.e., as
\[
	\Sub(X)
	=
	\{\pi\st\pi\le\tau\mbox{ for some $\tau\in X$}\}.
\]
We also make use of the \emph{proper downward closure} of a set $X$ of permutations,
\[
	\PropSub(X)
	=
	\{\pi\st\pi<\tau\mbox{ for some $\tau\in X$}\}.
\]

We define the \emph{generating function} for a class $\C$ of permutations as
\[
	\sum_{\pi\in \C} x^{|\pi|}=\sum_{n\ge 0}|\C_n|x^n,
\]
where $|\pi|$ denotes the length of $\pi$. It is easy to compute generating functions for sum closed classes, assuming we know enough about the sum indecomposable members, as we record below.

\begin{proposition}
\label{prop-enum-oplus-closure}
The generating function for a sum closed permutation class is $1/(1-g)$, where $g$ denotes the generating function for the nonempty sum indecomposable permutations in the class.
\end{proposition}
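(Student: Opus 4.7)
The plan is to invoke the unique sum decomposition of permutations together with the standard symbolic-method identity for sequences. Concretely, every nonempty permutation $\pi$ admits a unique expression $\pi=\alpha_1\oplus\cdots\oplus\alpha_k$ with each $\alpha_i$ sum indecomposable, as already recalled in the paragraph introducing sum components. Augmenting this with the empty decomposition ($k=0$) for the empty permutation, one obtains a length-preserving bijection between the set of \emph{all} permutations and the set of finite sequences (possibly empty) of nonempty sum indecomposable permutations.

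The only thing to check is that this bijection restricts to a bijection between $\C$ and the finite sequences of nonempty sum indecomposable elements of $\C$. First I would argue the forward direction: if $\pi=\alpha_1\oplus\cdots\oplus\alpha_k\in\C$, then each $\alpha_i\le\pi$, so each $\alpha_i\in\C$ because $\C$ is a permutation class (i.e., downward closed under $\le$); moreover, each $\alpha_i$ is by construction sum indecomposable. For the reverse direction, if $\alpha_1,\dots,\alpha_k$ are sum indecomposable members of $\C$, then $\alpha_1\oplus\cdots\oplus\alpha_k\in\C$ by a simple induction on $k$ using the hypothesis that $\C$ is sum closed.

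Having established this bijection, the generating function identity is immediate from the symbolic method (\textsc{Seq} construction): if $g(x)$ enumerates the nonempty sum indecomposable members of $\C$, then finite sequences of such objects are enumerated by
\[
    \sum_{k\ge 0} g(x)^k \;=\; \frac{1}{1-g(x)},
\]
which by the bijection above equals $\sum_{\pi\in\C}x^{|\pi|}$, as claimed.

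There is no substantive obstacle here; the statement is essentially bookkeeping for the unique sum decomposition. The only minor point that must not be glossed over is the appeal to downward closure of $\C$ to guarantee that the individual components $\alpha_i$ of an element of $\C$ themselves lie in $\C$ (the definition of ``sum closed'' only states closure under forming sums, not under taking components).
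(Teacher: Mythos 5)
The paper states Proposition~\ref{prop-enum-oplus-closure} without any proof, treating it as routine bookkeeping (``It is easy to compute generating functions for sum closed classes\ldots''). Your argument is correct and is exactly the standard justification one would write out: unique sum decomposition gives a length-preserving bijection between permutations and sequences of sum indecomposable permutations, downward closure of $\C$ ensures the components of a member of $\C$ remain in $\C$, sum closure ensures sequences of sum indecomposable members of $\C$ produce members of $\C$, and then the \textsc{Seq} construction yields $1/(1-g)$. You are also right to flag that both closure properties are needed --- downward closure for one direction of the bijection and sum closure for the other --- since this is the one small point that a hasty reader might overlook. No gap.
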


Growth rates can be determined from generating functions via the following result.

\begin{egf*}[see Flajolet and Sedgewick~{\cite[Section IV.3.2]{flajolet:analytic-combin:}}]
The upper growth rate of a permutation class is equal to the reciprocal of the least positive singularity of its generating function.
\end{egf*}

We say that the sequence $(s_n)$ can be \emph{realized} if there is a permutation class with precisely $s_n$ sum indecomposable permutations for every $n$. In light of Theorem~\ref{thm-xi-main}, our task in this paper is to determine the realizable sequences corresponding to sum closed permutation classes of growth rate less than $\xi$. Our main result is the following.

\begin{thm-xi-grs-main*}
The set of growth rates of permutation classes below $\xi$ can be characterized by a finite set of infinite families of algebraic numbers, collected in Tables~\ref{table-short-good-realizable} and \ref{table-long-good-realizable}.
\end{thm-xi-grs-main*}

We further establish with Theorem~\ref{thm-xi-fb} that every growth rate of a permutation class less than or equal to $\xi$ is achieved by a finitely based class.

In the next section we review the notions of monotone intervals and quotients and state basic consequences of the connection between sum indecomposable permutations and connected graphs. In Section~\ref{sec-xi} we describe several classes with growth rates near $\kappa$ and $\xi$ that reappear later in our arguments. In Section~\ref{sec-reconstruction} we establish that, with two notable exceptions, sum indecomposable permutations are uniquely determined by their sum indecomposable subpermutations. In Section~\ref{sec-taper} we employ this result to establish certain necessary conditions on realizable sequences. These conditions are further refined in Sections~\ref{sec-narrowing-1} and \ref{sec-narrowing-2}. In Section~\ref{sec-realizing} we show that all sequences not eliminated by these considerations are realizable and prove Theorem~\ref{thm-xi-grs-main}. In Section~\ref{sec-fb} we prove Theorem~\ref{thm-xi-fb} and exhibit a counterexample to a suggestion of Klazar~\cite{klazar:overview-of-som:}. We conclude in Section~\ref{sec-conclusion} with a discussion of the obstacles that would have to be overcome to extend our characterization.

\section{Monotone Intervals and Inversion Graphs}
\label{sec-monoints-graphs}

An \emph{interval} in the permutation $\pi$ is a set of contiguous indices $I=[a,b]$ such that the set of values $\pi(I)=\{\pi(i) : i\in I\}$ is also contiguous, and an interval is \emph{nontrivial} if it contains more than one but fewer than all of the entries of $\pi$. The \emph{substitution decomposition} describes how a permutation is built up from a \emph{simple permutation} (one with no nontrivial intervals) via repeated inflations by intervals. (Given a permutation $\sigma$ of length $m$ and nonempty permutations $\alpha_1,\dots,\alpha_m$, the \emph{inflation} of $\sigma$ by $\alpha_1,\dots,\alpha_m$---denoted $\sigma[\alpha_1,\dots,\alpha_m]$---is the permutation of length $|\alpha_1|+\cdots+|\alpha_m|$ obtained by replacing each entry $\sigma(i)$ by an interval that is order isomorphic to $\alpha_i$ in such a way that the intervals themselves are order isomorphic to $\sigma$.)

\begin{figure}
\begin{center}
	\begin{tikzpicture}[scale=0.2]
		\draw[lightgray, fill, rotate around={-45:(2,4)}] (2,4) ellipse (25pt and 60pt);
		\draw[lightgray, fill, rotate around={45:(4.5,1.5)}] (4.5,1.5) ellipse (20pt and 40pt);
		\draw[lightgray, fill, rotate around={-45:(7.5,7.5)}] (7.5,7.5) ellipse (30pt and 80pt);
		\plotpermbox{0.5}{0.5}{9.5}{9.5};
		\plotperm{3,4,5,2,1,6,7,8,9};
	\end{tikzpicture}
\end{center}
\caption{The plot of $345216789$, whose monotone quotient is $213$.}
\label{fig-479832156}
\end{figure}
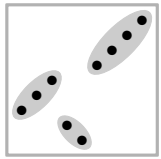

For the results we establish, a different and in some sense weaker decomposition is required. A \emph{monotone interval} in a permutation is an interval whose entries are monotone (increasing or decreasing). Given a permutation $\pi$, we define the \emph{monotone quotient} of $\pi$ to be the shortest permutation $\mu$ such that $\pi$ is an inflation of $\mu$ by monotone permutations. Alternatively, the monotone quotient of $\pi$ can be found by contracting all maximal length monotone intervals to single entries. For example, the monotone quotient of $345216789$ is $213$, because $345216789 = 213[123,21,1234]$. This construction is unique because if two monotone intervals intersect then their union must itself be monotone. Note that the monotone quotient of a sum indecomposable permutation is itself sum indecomposable and that monotone quotients may have nontrivial monotone intervals themselves (unlike the case with the usual substitution decomposition).

Given a permutation $\pi$ and an entry $x$ of $\pi$, we denote by $\pi-x$ the permutation that is order isomorphic to $\pi$ with $x$ removed, and we call $\pi-x$ a \emph{child} of $\pi$. For example, the set of children of $2314$ is
\[
	\{2314-2, 2314-3, 2314-1, 2314-4\}
	=
	\{213, 123, 231\}.
\]
Our interest in monotone intervals and quotients comes from the following fact, which is easily established by using induction on the number of entries between $x$ and $y$.

\begin{proposition}
\label{prop-child-mono}
We have $\pi-x=\pi-y$ for entries $x$ and $y$ of a permutation $\pi$ if and only if $x$ and $y$ lie in the same monotone interval of $\pi$.	
\end{proposition}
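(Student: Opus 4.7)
My plan is to analyze both implications by a direct position-by-position comparison of $\pi-x$ and $\pi-y$, using that removing an entry and standardizing shifts every position above it and every value above it down by $1$.

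For ($\Leftarrow$), suppose $x$ and $y$ both belong to a monotone interval $M$ of $\pi$, occupying positions $[a,b]$ and values $[c,d]$, and (by symmetry) assume $M$ is increasing. Every entry outside $M$ has position outside $[a,b]$ and value outside $[c,d]$, so its position-shift and value-shift are identical when either $x$ or $y$ is removed from inside $M$. Meanwhile, the surviving entries $M\setminus\{x\}$ in $\pi-x$ and $M\setminus\{y\}$ in $\pi-y$ each standardize to an increasing monotone block of length $|M|-1$ at positions $[a,b-1]$ with values $[c,d-1]$. Hence the two permutations coincide.

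For ($\Rightarrow$), place $x$ at position $i$ and $y$ at position $j>i$, and by a symmetry (complementing $\pi$ if necessary) assume $v_x<v_y$. Writing $v_q:=\pi(q)$, the $q$-th entry of $\pi-y$ is $v_q$ shifted down by $1$ if $v_q>v_y$, while the $q$-th entry of $\pi-x$ is $v_{q+1}$ shifted down by $1$ if $v_{q+1}>v_x$; equating these for $q\in\{i,i+1,\dots,j-1\}$ gives the recurrence
\[
v_q - [v_q>v_y] \;=\; v_{q+1} - [v_{q+1}>v_x],
\]
where $[\,\cdot\,]$ is the Iverson bracket. At $q=i$ (so $v_q=v_x$) this forces $v_{i+1}=v_x+1$, and induction on $q$ yields $v_{i+k}=v_x+k$ for $k=0,1,\dots,j-i$. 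Consequently positions $i,i+1,\dots,j$ carry the consecutive values $v_x,v_x+1,\dots,v_y$ in increasing order, which is precisely a monotone interval containing $x$ and $y$.

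The one point needing a little care is verifying that the recurrence propagates cleanly: each intermediate $v_{i+k}=v_x+k$ must lie strictly in $(v_x,v_y)$ so that $[v_q>v_y]=0$ and $[v_{q+1}>v_x]=1$ throughout. This is pinned down by the terminal instance of the recurrence at $q=j-1$, which forces $v_{j-1}=v_y-1$ and hence $v_y-v_x=j-i$; the chain of intermediate values then lands exactly in $(v_x,v_y)$ without collision, and no additional case analysis arises.
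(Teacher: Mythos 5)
Your proof is correct, and it instantiates, with explicit bookkeeping, the inductive sketch the paper alludes to (``easily established using induction on the number of entries between $x$ and $y$''). The forward ($\Leftarrow$) direction is clean. For the ($\Rightarrow$) direction, your recurrence
$v_q - [v_q>v_y] = v_{q+1} - [v_{q+1}>v_x]$ on $q\in\{i,\dots,j-1\}$ is set up correctly and does force $v_{i+k}=v_x+k$.

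The only spot I would tighten is the final ``care'' paragraph. As written it reads circularly: you invoke the terminal instance at $q=j-1$ to conclude $v_y-v_x=j-i$, but going from $v_{j-1}=v_y-1$ to that equality already uses the full forward chain $v_{j-1}=v_x+(j-1-i)$, which is exactly what you are in the middle of justifying. The cleaner way to close the loop is to rule out the ``wrong'' Iverson case at each step directly: if $[v_{i+k}>v_y]=1$, the recurrence would give $v_{i+k+1}=v_{i+k}-1=v_x+k-1$, colliding with $v_{i+k-1}$ (or, when $k=0$, forcing $v_x>v_y$), a contradiction; and $v_{i+k}=v_y$ is impossible since $i+k<j$. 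This keeps the induction self-contained and makes ``the chain lands in $(v_x,v_y)$ without collision'' a proved fact rather than an observation you appeal to midstream. With that patch the argument is airtight and matches the paper's intended approach.
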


As a consequence of Proposition~\ref{prop-child-mono}, if $\pi$ has monotone quotient $\mu$ then by deleting entries of $\pi$ corresponding to distinct entries of $\mu$ we obtain distinct children.

We also make frequent use of the connection between permutations and graphs and a few basic graph-theoretic results. Given a permutation $\pi$ of length $n$, its \emph{inversion graph} $G_\pi$ (a.k.a. a \emph{permutation graph}) is the graph on the vertices $[n]=\{1,2,\dots,n\}$ in which $i\sim j$ if and only if the entries $\pi(i)$ and $\pi(j)$ form an inversion in $\pi$, i.e., if $i<j$ and $\pi(i)>\pi(j)$. Note that the graph $G_\pi$ is connected if and only if $\pi$ is sum indecomposable (because two entries that form an inversion must lie in the same sum component). As each entry of $\pi$ corresponds to a vertex of $G_\pi$, we commit a slight abuse of language by referring (for example) to the degree of an entry of $\pi$ when we mean the degree of the corresponding vertex of $G_\pi$. In a similar way, we talk about the \emph{leaves} of $\pi$ when we mean the entries of $\pi$ that correspond to leaves of $G_\pi$ (i.e., vertices of degree $1$).

The containment order on permutations corresponds to the induced subgraph order on inversion graphs in the sense that if $\sigma\le\pi$ then $G_\pi$ contains an induced subgraph isomorphic to $G_\sigma$. Note that this is true even though the mapping $\pi \to G_\pi$ is not injective; in particular, $G_{\pi}$ and $G_{\pi^{-1}}$ are isomorphic for all permutations $\pi$. We extend our definition of child to graphs by saying that $H$ is a \emph{child} of $G$ if $H=G-v$ for some vertex $v$.

If the inversion graph $G_\pi$ is a path, we call $\pi$ an \emph{increasing oscillation}. This term dates back to Murphy's thesis~\cite{murphy:restricted-perm:}, though note that under our definition, the permutations $1$, $21$, $231$, and $312$ are increasing oscillations while in other works they are not. By direct construction, the increasing oscillations can be seen to be precisely the sum indecomposable permutations that are order isomorphic to subsequences of the \emph{increasing oscillating sequence},
\[
	2,4,1,6,3,8,5,\dots,2k,2k-3,\dots.
\]
Increasing oscillations and their inflations arise repeatedly throughout this work and are depicted in Figures~\ref{fig-infinite-antichain}, \ref{fig-four-antichains}, \ref{fig-mu11-kids}, \ref{fig-5-doesnt-imply-5}, and \ref{fig-forced-extra-kids}.

We denote by $\O_I$ the downward closure of the set of increasing oscillations. There are two increasing oscillations of each length $n\ge 3$, and they are inverses of each other. We arbitrarily choose those beginning with $2$ to call the \emph{primary type}. The two primary type increasing oscillations are
\[
	2 4 1 6 3 8 5 \cdots n (n-3) (n-1)
\]
for even $n\ge 4$, and
\[
	2 4 1 6 3 8 5 \cdots (n-4) n (n-2).
\]
for odd $n\ge 5$.

We conclude this section by stating two graph-theoretic results we appeal to later. Recall that a \emph{cut vertex} in a graph is a vertex whose removal increases the number of connected components.

\begin{proposition}
\label{prop-not-cut}
Every connected graph with at least two vertices contains at least two vertices that are not cut vertices, and the only connected graphs with precisely two non-cut vertices are paths. Correspondingly, every sum indecomposable permutation of length at least two has at least two entries whose removal leaves the permutation sum indecomposable (though the resulting children may be identical).
\end{proposition}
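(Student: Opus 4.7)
The plan is to reduce the permutation assertion to the graph claims and then prove the latter with a short spanning-tree argument. Observe first that $\pi$ is sum indecomposable exactly when $G_\pi$ is connected and that $G_{\pi-x}=G_\pi-x$; hence the entries $x$ of $\pi$ for which $\pi-x$ remains sum indecomposable are precisely those $x$ that are non-cut vertices of $G_\pi$. Once the two graph-theoretic claims are established the permutation corollary follows at once, and the parenthetical remark about identical children is consistent with Proposition~\ref{prop-child-mono}.

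For the first claim, I would take any spanning tree $T$ of $G$. If $v$ is a leaf of $T$, then $T-v$ is a spanning tree of $G-v$, so $G-v$ is connected and $v$ is not a cut vertex of $G$. Since every tree on at least two vertices has at least two leaves, this already yields two non-cut vertices of $G$.

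For the second claim, suppose that $G$ has exactly two non-cut vertices. By the argument above any spanning tree $T$ has at most two leaves, so $T$ is itself a path $w_1 w_2\cdots w_n$; moreover the endpoints $w_1$ and $w_n$ must be the two non-cut vertices of $G$, so every interior $w_i$ with $2\le i\le n-1$ is a cut vertex. It remains to show that $G$ has no edges outside of $T$. Suppose for contradiction that $w_a w_b\in E(G)\setminus E(T)$ with $b\ge a+2$, and choose any index $i$ with $a<i<b$. In $G-w_i$ the surviving subpaths $w_1\cdots w_{i-1}$ and $w_{i+1}\cdots w_n$ are joined by the chord $w_a w_b$, so $G-w_i$ is connected---contradicting that $w_i$ is a cut vertex. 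Hence $G=T$ is a path.

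I do not anticipate a genuine obstacle here; the only step that requires a moment of thought is the chord elimination, and it is forced once one observes that any chord jumping over an interior vertex of the spanning path would preserve connectivity after that vertex is removed.
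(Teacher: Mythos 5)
The paper states Proposition~\ref{prop-not-cut} without proof, introducing it with ``We conclude this section by stating two graph-theoretic results we appeal to later,'' so there is no argument of the authors' to compare against. Your proof is correct and self-contained: the reduction via $G_{\pi-x}=G_\pi-x$ identifies non-cut vertices with entries whose removal preserves sum indecomposability, the spanning-tree/leaf argument delivers two non-cut vertices, and the chord-elimination step correctly shows that any edge $w_aw_b$ outside the spanning path would make every intermediate $w_i$ ($a<i<b$) a non-cut vertex, contradicting that all interior vertices of the path must be cut vertices, and hence forcing $G=T$. The only cosmetic caveat is that the permutation version, like the graph version, tacitly assumes length at least two (removing the sole entry of $\pi=1$ is not covered), but this matches the hypothesis ``at least two vertices'' and causes no harm in the paper's applications.
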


In the following result, we call the complete bipartite graph $K_{1,3}$ a \emph{claw}, and thus the inversion graphs of the permutations $2341$ and $4123$ are claws. Also, the only permutations whose inversion graphs are cycles are $321$ and $3412$.

\begin{proposition}
\label{prop-non-path-child}
Every connected graph that is neither a path, a cycle, nor a claw has a connected child that is not a path. Correspondingly, every sum indecomposable permutation that is not an increasing oscillation, $321$, $2341$, $3412$, or $4123$ contains a sum indecomposable child that is not an increasing oscillation.
\end{proposition}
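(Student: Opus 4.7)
The plan is to reduce the permutation claim to the corresponding graph-theoretic statement and prove the latter by contradiction. The reduction is immediate from the established correspondence $\pi \leftrightarrow G_\pi$: connected inversion graphs encode sum-indecomposable permutations, removing an entry corresponds to removing a vertex, and paths, the cycles $C_3, C_4$, and the claw $K_{1,3}$ correspond exactly to increasing oscillations, $\{321, 3412\}$, and $\{2341, 4123\}$, respectively.

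For the graph version, suppose toward a contradiction that $G - u$ is a path for every non-cut vertex $u$. Since $G$ is not a path, Proposition~\ref{prop-not-cut} yields three non-cut vertices $u_1, u_2, u_3$. Because each $G - u_i$ is a path on $n-1$ vertices with $n-2$ edges, we get $\deg(u_i) = m - n + 2 =: d$, and every vertex of degree at least $3$ other than $u_i$ must lie in $N(u_i)$ (else it would remain of degree $\geq 3$ in $G - u_i$). If some vertex $v$ had $\deg(v) \geq 4$, then for any non-cut $u \neq v$ (which exists by Proposition~\ref{prop-not-cut}), $v$ would retain degree $\geq 3$ in $G - u$, contradicting that $G - u$ is a path; hence $\Delta(G) \leq 3$ and $d \in \{1, 2, 3\}$.

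The argument concludes by case analysis on $d$. When $d = 1$, $G$ is a tree whose non-cut vertices are its leaves, and the constraint forces each of the three leaves to be adjacent to every degree-$3$ vertex; this is only possible if $G = K_{1,3}$, which is excluded. When $d = 2$, $G$ is unicyclic but can have no leaves (a leaf would be a non-cut vertex of degree $1 \neq 2$), so $G$ is a cycle, also excluded. When $d = 3$, the mutual-adjacency constraint forces $\{u_1, u_2, u_3\}$ to be a triangle; let $a_i$ denote the third neighbor of $u_i$. If some $a_i$ has degree $3$, then the constraint applied to each $u_j$ forces $a_1 = a_2 = a_3 = v$ with $N(v) = \{u_1, u_2, u_3\}$, giving $G = K_4$; but then $G - u = K_3$ is not a path, contradicting the assumption. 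Otherwise every $a_i$ has degree exactly $2$, and in $G - u_1$ the unique vertex of degree $1$ is $a_1$ (the other neighbors $u_2, u_3$ drop from degree $3$ to $2$, and $G$ itself has no leaves for the reason given in the $d = 2$ case), so the path $G - u_1$ would have only one endpoint, a contradiction.

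The main technical work lies in the $d = 3$ case, where I must carefully verify the two subcases: that a degree-$3$ third neighbor forces $G = K_4$, and that in the remaining subcase the only vertex whose degree drops to $1$ in $G - u_1$ is $a_1$ itself. The other cases amount to short edge-counting arguments once the identity $\deg(u) = m - n + 2$ is in hand.
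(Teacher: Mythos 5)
The paper states Proposition~\ref{prop-non-path-child} without proof, treating it as a routine graph-theoretic fact, so there is no proof in the text against which to compare; your job is to supply one and you have done so correctly. The reduction to the graph statement is sound: the preimages of paths, cycles, and claws under $\pi\mapsto G_\pi$ are exactly the increasing oscillations, $\{321,3412\}$, and $\{2341,4123\}$, and removing an entry matches removing a vertex. Your graph argument is a clean, self-contained edge-count: assuming every connected child $G-u$ (for $u$ non-cut) is a path forces all non-cut vertices to share the common degree $d=m-n+2$, and the constraint that no vertex of degree $\geq 3$ can survive deletion of a non-adjacent non-cut vertex gives $\Delta(G)\le 3$, hence $d\in\{1,2,3\}$; the three cases terminate in $K_{1,3}$, a cycle, or a contradiction via $K_4$ or a single-endpoint count, all of which are excluded. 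The case analysis is complete and each step checks out. One small redundancy worth noting when polishing: in the $d=3$ subcase where some $a_i$ has degree $3$, the conclusion $G=K_4$ already contradicts $m=n+1$ directly ($K_4$ has $m=6=n+2$), so you do not actually need the further observation that $G-u=K_3$ is not a path — either contradiction suffices. Otherwise the argument is correct as written.
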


\section{The Neighborhoods of $\kappa$ and $\xi$}
\label{sec-xi}

Here we briefly survey certain classes that lie at or near the growth rates $\kappa$ and $\xi$. First, because $\O_I$ is sum closed and contains precisely two sum indecomposable permutations of each length $n\ge 3$, we see from Proposition~\ref{prop-enum-oplus-closure} that the generating function of $\O_I$ is
\[
	\frac{1}{1-\left(x+x^2+\frac{2x^3}{1-x}\right)}
	=
	\frac{1-x}{1-2x-x^3}.
\]
We can use the Exponential Growth Formula to compute the growth rate of $\O_I$. In order to express this growth rate as the root of a polynomial (as opposed to the reciprocal of the root of a polynomial), we take the factor responsible for the least positive singularity, $1-2x-x^3$, replace $x$ by $x^{-1}$ to obtain $1-2x^{-1}-x^{-3}$, and then multiply by $x^3$ to get $x^3-2x^2-1$. The growth rate of $\O_I$ is the greatest positive root of this polynomial, namely $\kappa$.

\begin{figure}
\begin{center}
	\begin{tikzpicture}[scale=0.2, baseline=(current bounding box.center)]
		\draw[lightgray, fill, rotate around={-45:(1.5,2.5)}] (1.5,2.5) ellipse (20pt and 40pt);
		\draw[lightgray, fill, rotate around={-45:(9.5,10.5)}] (9.5,10.5) ellipse (20pt and 40pt);
		\plotpermgraph{2,3,5,1,7,4,9,6,10,11,8};
		\plotpermbox{0.5}{0.5}{11.5}{11.5};
	\end{tikzpicture}
\quad\quad
	\begin{tikzpicture}[scale=0.2, baseline=(current bounding box.center)]
		\node at (0,6.5) {$=$};
	\end{tikzpicture}
\quad\quad
	\begin{tikzpicture}[scale={0.2*(12/16)}, baseline=(current bounding box.center)]
		\absdot{(-2,1)}{};
		\absdot{(2,1)}{};
		\foreach \i in {3,5,7,9,11,13,15}{
			\absdot{(0,\i)}{};
		}
		\absdot{(-2,17)}{};
		\absdot{(2,17)}{};
		\draw (-2,1)--(0,3);
		\draw (2,1)--(0,3);
		\draw (0,3)--(0,15);
		\draw (0,15)--(-2,17);
		\draw (0,15)--(2,17);
	\end{tikzpicture}
\end{center}
\caption{Two drawings of the inversion graph of $\mu_{11}\in U^o$, a double-ended fork.}
\label{fig-infinite-antichain}
\end{figure}
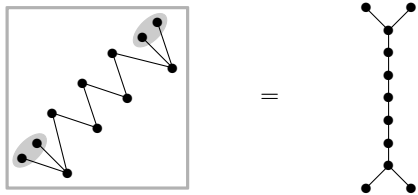

We can use increasing oscillations to build a variety of infinite antichains of permutations whose downward closures also have growth rate $\kappa$. Two such antichains of permutations, one contained in the other, are central to this work; these are denoted $U^o$ and $U$. To construct $U^o$ we take a primary type increasing oscillation (i.e., beginning with $2$) of odd length at least $2i-1\ge 5$ and inflate the two leaves by copies of $12$ (these pairs of entries are called \emph{anchors}). This produces a permutation $\mu_{2i+1}$ of length $2i+1\ge 7$ (whether $U^o$ contains $23451$ differs from paper to paper; we exclude it throughout this work). An example is shown in Figure~\ref{fig-infinite-antichain}.

The inversion graph of $\mu_{2i+1}\in U^o$ is therefore a \emph{double-ended fork}, meaning that it is formed from a path on $2i-3$ vertices by adding four leaves, two adjacent to each endpoint of the original path. The set of double-ended forks clearly forms an infinite antichain under the induced subgraph order. Because $G_\sigma$ is an induced subgraph of $G_\pi$ whenever $\sigma\le\pi$, it follows that the set $U^o$ also forms an infinite antichain of permutations.

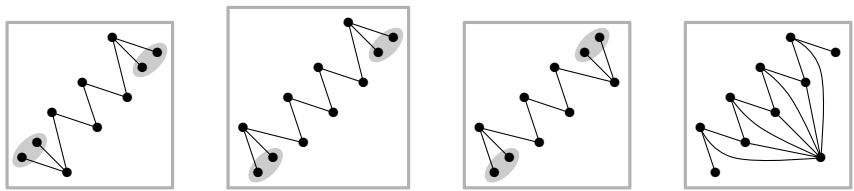
\begin{figure}
\begin{footnotesize}
\begin{center}
	\begin{tikzpicture}[scale=0.2]
		\draw[lightgray, fill, rotate around={-45:(1.5,2.5)}] (1.5,2.5) ellipse (20pt and 40pt);
		\draw[lightgray, fill, rotate around={-45:(9.5,8.5)}] (9.5,8.5) ellipse (20pt and 40pt);
		\plotpermgraph{2,3,5,1,7,4,10,6,8,9};
		\plotpermbox{0.5}{0.5}{10.5}{10.5};
	\end{tikzpicture}
\quad\quad
	\begin{tikzpicture}[scale=0.2]
		\draw[lightgray, fill, rotate around={-45:(2.5,1.5)}] (2.5,1.5) ellipse (20pt and 40pt);
		\draw[lightgray, fill, rotate around={-45:(10.5,9.5)}] (10.5,9.5) ellipse (20pt and 40pt);
		\plotpermgraph{4,1,2,6,3,8,5,11,7,9,10};
		\plotpermbox{0.5}{0.5}{11.5}{11.5};
	\end{tikzpicture}
\quad\quad
	\begin{tikzpicture}[scale=0.2]
		\draw[lightgray, fill, rotate around={-45:(2.5,1.5)}] (2.5,1.5) ellipse (20pt and 40pt);
		\draw[lightgray, fill, rotate around={-45:(8.5,9.5)}] (8.5,9.5) ellipse (20pt and 40pt);
		\plotpermgraph{4,1,2,6,3,8,5,9,10,7};
		\plotpermbox{0.5}{0.5}{10.5}{10.5};
	\end{tikzpicture}
\quad\quad
	\begin{tikzpicture}[scale=0.2]
		\plotperm{4,1,6,3,8,5,10,7,2,9};
		\plotpermbox{0.5}{0.5}{10.5}{10.5};
		\draw (1,4)--(2,1); 
		\draw (1,4)--(4,3)--(3,6)--(6,5)--(5,8)--(8,7)--(7,10)--(10,9);
		\draw plot [smooth, tension=0.75] coordinates { (1,4) (3.25,2) (9,2) };
		\draw plot [smooth, tension=0.75] coordinates { (3,6) (5,4) (9,2) };
		\draw (4,3)--(9,2); 
		\draw plot [smooth, tension=0.75] coordinates { (5,8) (7,6) (9,2) };
		\draw (6,5)--(9,2); 
		\draw plot [smooth, tension=0.75] coordinates { (7,10) (9,7.75) (9,2) };
		\draw (8,7)--(9,2); 
	\end{tikzpicture}
\end{center}
\end{footnotesize}
\caption{From left to right, typical members of four closely related antichains: $U^e$, $(U^o)^{-1}$, $(U^e)^{-1}$, and a typical member of a different antichain based on increasing oscillations.}
\label{fig-four-antichains}
\end{figure}

We further denote by $U$ the set of all permutations whose inversion graphs are isomorphic to double-ended forks on $6$ or more vertices. It follows that $U$ contains $U^o$ together with three other types of members: inflated increasing oscillations of primary type and even length, $U^e$, and the inverses of both $U^o$ and $U^e$. Members of these three additional sets are shown in Figure~\ref{fig-four-antichains}. That figure also includes (on the right) a member of a different antichain based on increasing oscillations. Indeed, there are a great many ways to form infinite antichains from increasing oscillations; the reason that $U^o$ is central to this work while the others are not is essentially because the growth rate of $\bigoplus\Sub(U^o)$ is small.

\begin{figure}
\begin{center}
	\begin{tikzpicture}[scale=0.2]
		\draw[lightgray, fill, rotate around={-45:(1.5,2.5)}] (1.5,2.5) ellipse (20pt and 40pt);
		\plotpermgraph{2, 3, 5, 1, 7, 4, 9, 6, 10, 8};
		\plotpermbox{0.5}{0.5}{10.5}{10.5};
	\end{tikzpicture}
\quad\quad
	\begin{tikzpicture}[scale=0.2]
		\draw[lightgray, fill, rotate around={-45:(8.5,10.5)}] (8.5,10.5) ellipse (20pt and 40pt);
		\plotpermgraph{3, 5, 2, 7, 4, 9, 6, 10, 11, 8};
		\plotpermbox{0.5}{1.5}{10.5}{11.5};
	\end{tikzpicture}
\quad\quad
	\begin{tikzpicture}[scale=0.2]
		\plotpermgraph{2, 4, 1, 6, 3, 8, 5, 10, 7, 9};
		\plotpermbox{0.5}{0.5}{10.5}{10.5};
	\end{tikzpicture}
\quad\quad
	\begin{tikzpicture}[scale=0.2]
		\plotpermgraph{3, 1, 5, 2, 7, 4, 9, 6, 10, 8};
		\plotpermbox{0.5}{0.5}{10.5}{10.5};
	\end{tikzpicture}
\end{center}
\caption{The four sum indecomposable permutations of length $10$ properly contained in $\mu_{2i+3}\in U^o$ for sufficiently large $i$. From left to right, these are the head, the tail, the primary body, and the non-primary body.}
\label{fig-mu11-kids}
\end{figure}
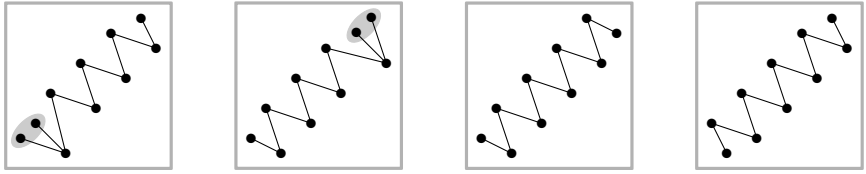

Consider a particular member, say $\mu_{2i+7}$, of $U^o$ for $i\ge 0$. The downward closure of this permutation contains the sum indecomposable permutations $1$, $21$, $231$, $312$, $2341$, $2413$, and $3142$. It also contains $4$ sum indecomposable permutations of length $n$ for each $n$ satisfying $5\le n\le 2i+4$, of the forms shown in Figure~\ref{fig-mu11-kids}. As we increase $n$ beyond $2i+4$, we begin to lose some of these sum indecomposable subpermutations. For $n\ge 2i+5$, $\Sub_n(\mu_{2i+7})$ no longer contains the non-primary body and for $n\ge 2i+6$, $\Sub_n(\mu_{2i+7})$ no longer contains the primary body. A similar analysis applies to members of $U^e$, though they contain $4$ sum indecomposable subpermutations of length $4$ (the additional permutation, $4123$, is contained in the tails of members of $U^e$). We collect these computations below.

\begin{proposition}
\label{prop-U-sum-indecomp-kids}
The sequence counting sum indecomposable permutations contained in $\mu_{2i+7}$ for $i\ge 0$ is $1,1,2,3,4^{2i},3,2,1$. The sequence counting sum indecomposable permutations contained in $\mu_{2i+6}$ for $i\ge 0$ is $1,1,2,4^{2i},3,2,1$.
\end{proposition}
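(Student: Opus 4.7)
The proof will proceed by direct analysis of the inversion graphs. Both $G_{\mu_{2k+7}}$ and $G_{\mu_{2k+6}}$ are split-end paths, and hence trees: each has a central body path on $2k+3$ (respectively $2k+2$) vertices, with two pendant leaves attached to each body endpoint arising from the $12$-inflations of the underlying oscillation's leaves. Sum indecomposable subpermutations correspond to connected induced subgraphs of $G_\mu$, and since $G_\mu$ is a tree, to subtrees. I classify these subtrees by triples $(\ell_L, \ell_R, S)$, where $\ell_L, \ell_R \in \{0, 1, 2\}$ count the anchor leaves included at each end and $S$ is the included body sub-path; connectivity forces $S$ to contain the left split-end vertex $v_L$ whenever $\ell_L \geq 1$ and $v_R$ whenever $\ell_R \geq 1$. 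By Proposition~\ref{prop-child-mono} the two leaves within each anchor lie in a common monotone interval and produce the same subpermutation, so the triple $(\ell_L, \ell_R, S)$ captures distinct subperms up to these equivalent leaf choices.

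The central observation is that the subpermutation of $\mu$ obtained by deleting exactly one entry from each anchor (call it the \emph{extended body}) is the primary-type increasing oscillation of length $2k+5$ (respectively $2k+4$) used to construct $\mu$. Its inversion graph is the full body of $G_\mu$ together with one leaf from each anchor, an induced path. Consequently, any subtree of $G_\mu$ with $\ell_L \leq 1$ and $\ell_R \leq 1$ is an induced sub-path of this primary oscillation, yielding an increasing oscillation of length $n$ as a subpermutation; as the length-$n$ window slides across the oscillation, consecutive subperms alternate between the primary type and its inverse (the secondary type), so whenever there are at least two windows we obtain both. Subtrees with $\ell_L = 2$ or $\ell_R = 2$ instead contain a vertex of degree $3$ in the induced subgraph and produce a \emph{head} or \emph{tail} subpermutation whose inversion graph contains $K_{1,3}$ and is therefore distinct from any oscillation.

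Counting these four subperms (primary, secondary, head, tail) accounts for the value $4$ in the middle range of the sequence. The top-end values are handled directly: removing one or two entries while preserving connectivity restricts us to the leaves of $G_\mu$, which are exactly the anchor entries, and applying Proposition~\ref{prop-child-mono} gives the counts $1, 2, 3$ at lengths $2k+7, 2k+6, 2k+5$ for $\mu_{2k+7}$ and analogously for $\mu_{2k+6}$. The initial lengths $1, 2, 3$ are immediate, with the triangle-freeness of the tree ruling out $321$.

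The main obstacle is tracking the distinction between $U^o$ and $U^e$ correctly at length $4$. For $\mu_{2k+7} \in U^o$ (underlying oscillation of odd length), both anchor-derived $K_{1,3}$ substructures have a \emph{low-center} vertex, since in the odd-length primary oscillation both inversion-graph leaves have ``pit'' neighbors, and both yield the subpermutation $2341$; at length $4$ the head and tail reduce to just this $K_{1,3}$ and therefore coincide, giving count $3$ rather than $4$. For $\mu_{2k+6} \in U^e$ (underlying oscillation of even length), the right-end $K_{1,3}$ instead has a \emph{high-center} vertex, since the right leaf of the even-length primary oscillation has a ``peak'' neighbor, and yields the distinct subpermutation $4123$; combined with the two length-$4$ oscillations $2413$ and $3142$ from the extended body (present when $k \geq 1$) this gives count $4$ at length $4$. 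When $k = 0$ the extended body has length only $4$, providing just one window and hence only the primary oscillation $2413$, so the count at length $4$ becomes $3$ and the sequence reduces to $1, 1, 2, 3, 2, 1$.
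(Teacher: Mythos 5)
Your argument is correct and parallels the paper's own sketch, which likewise classifies the sum indecomposable subpermutations of the split-end-path members into four families --- the paper's head, tail, primary body, and non-primary body --- and tracks the length-$4$ and top-end boundary counts, including the $k=0$ degeneracies. The one step you leave implicit (as the paper does too, simply displaying the four types in a figure) is that head and tail give \emph{distinct} subpermutations at every length $n$ in the middle range; this is easily supplied by noting that for $n\ge 5$ the unique degree-$3$ vertex of the head's inversion graph is the minimum entry of that subpermutation, while the tail's is not.
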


Moreover, it is easy to establish that
\[
	\gr(\Sub(U))=\gr(\Sub(U^o))=\gr(\O_I)=\kappa.
\]
One method is the following. The permutation $\tau$ is said to be a \emph{$p$-point extension} of $\pi$ if $\tau$ can be obtained from $\pi$ by inserting $p$ or fewer entries. The class of all $p$-point extensions of members of the class $\C$ is further denoted $\C^{+p}$. Clearly $|\C_n^{+1}|\le (n+1)^2|\C_n|$, from which it follows that $\ugr(\C^{+p})=\ugr(\C)$ and $\lgr(\C^{+p})=\lgr(\C)$ for all classes $\C$ and natural numbers $p$. The growth rate claim above  holds because $\Sub(U^o)\subseteq\Sub(U)\subseteq\O_I^{+2}$.

Because $U^o$ is an antichain, no member of $U^o$ is contained in any other permutation from $\Sub(U^o)$. Thus every class of the form $\Sub(U^o)\setminus T$ for $T\subseteq U^o$ has growth rate $\kappa$, establishing the following result, which was first obtained by Murphy and Vatter [unpublished] and reported in the conclusion of Klazar~\cite{klazar:on-the-least-ex:}.

\begin{proposition}
\label{prop-uncountable-kappa}
There are uncountably many permutation classes of growth rate $\kappa$.	
\end{proposition}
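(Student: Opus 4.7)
The plan is to directly leverage the infinite antichain $U^o$ constructed above, essentially formalizing the argument already sketched in the paragraph preceding the proposition. The strategy is to produce an injective map from subsets of $U^o$ to permutation classes of growth rate $\kappa$; since $U^o$ is countably infinite, its power set has cardinality $2^{\aleph_0}$, which will give the conclusion.

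Concretely, for each $T \subseteq U^o$, I would define
\[
    \C_T = \Sub(U^o) \setminus T
\]
and verify three things in sequence. First, $\C_T$ is actually a permutation class, i.e., a downset under containment. Given $\pi \in \C_T$ and $\sigma \le \pi$, since $\pi \in \Sub(U^o)$ there exists $\tau \in U^o$ with $\pi \le \tau$, hence $\sigma \le \tau$, so $\sigma \in \Sub(U^o)$. If by contradiction $\sigma \in T \subseteq U^o$, then $\sigma$ and $\tau$ are both in $U^o$ with $\sigma \le \tau$; the antichain property forces $\sigma = \tau$, hence $\pi = \sigma \in T$, contradicting $\pi \in \C_T$. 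Second, the assignment $T \mapsto \C_T$ is injective: if $T_1 \ne T_2$, pick $\pi$ in their symmetric difference; then $\pi$ lies in exactly one of the two classes.

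Third, each $\C_T$ has growth rate exactly $\kappa$. For this I would sandwich $\C_T$ between two classes of known growth rate $\kappa$. Clearly $\C_T \subseteq \Sub(U^o)$, and since $\Sub(U^o) \subseteq \O_I^{+2}$ we already have $\ugr(\C_T) \le \gr(\Sub(U^o)) = \kappa$. For the lower bound, observe that $\PropSub(U^o) = \Sub(U^o) \setminus U^o \subseteq \C_T$; since $U^o$ contains at most one permutation of each length, removing it alters $|\Sub(U^o)_n|$ by at most $1$ at each length, so $\PropSub(U^o)$ still has growth rate $\kappa$ and consequently $\lgr(\C_T) \ge \kappa$.

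There is essentially no obstacle here; all the real work has been done in constructing the antichain $U^o$, verifying its antichain property via the split-end path structure of its inversion graphs, and computing $\gr(\Sub(U^o)) = \kappa$. The only thing the proof adds is the packaging argument that an infinite antichain in any poset yields $2^{\aleph_0}$ distinct downsets obtained by deleting arbitrary subsets of the antichain, combined with the trivial squeeze on growth rates. If anything, the mildly delicate point is confirming that the downset property survives deletion of antichain elements, which is where the antichain hypothesis on $U^o$ is essential.
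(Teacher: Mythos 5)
Your proposal is correct and is essentially identical to the paper's own argument, which consists of the one-line observation preceding the proposition: since $U^o$ is an antichain, each $\Sub(U^o)\setminus T$ for $T\subseteq U^o$ is a distinct permutation class, and all have growth rate $\kappa$ by the sandwich $\O_I\subseteq\PropSub(U^o)\subseteq\Sub(U^o)\setminus T\subseteq\Sub(U^o)\subseteq\O_I^{+2}$. You have simply spelled out the routine verifications (downset, injectivity, growth-rate squeeze) that the paper leaves implicit.
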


The results of \cite{vatter:small-permutati:} establish that classes of growth rate less than $\kappa$ cannot contain infinite antichains (i.e., they are \emph{well-quasi-ordered}), and thus Proposition~\ref{prop-uncountable-kappa} is best possible.

The antichain $U^o$ allows us to construct uncountably many permutation classes, but they all have the same growth rate, $\kappa$. In order to construct uncountably many classes with different growth rates, we examine classes lying between $\bigoplus\PropSub(U^o)$ and $\bigoplus\Sub(U^o)$. It follows from our justification of Proposition~\ref{prop-U-sum-indecomp-kids} that the sequence of sum indecomposable permutations in $\PropSub(U^o)$ is $1,1,2,3,4^\infty$. (Note that this is \emph{not} the sequence of sum indecomposable permutations in $\PropSub(U)$ as that class contains two different types of tails of each length as well as an extra sum indecomposable permutation of length $4$.) Consequently, the generating function of $\bigoplus\PropSub(U^o)$ is
\[
	\frac{1}{1-\left(x+x^2+2x^3+3x^4+4\frac{x^5}{1-x}\right)}
	=
	\frac{1-x}{1-2x-x^3-x^4-x^5}.
\]
This shows that the growth rate of $\bigoplus\PropSub(U^o)$ is the greatest positive root of $x^5-2x^4-x^2-x-1$, i.e., $\xi$.

Given infinite sequences or finite sequences padded with zeros, $(r_n)$ and $(t_n)$, we say that $(r_n)$ is \emph{dominated} by $(t_n)$, and write $(r_n)\preceq (t_n)$, if $r_n\le t_n$ for all $n$. We observed above that the sequence counting sum indecomposable permutations in $\PropSub(U^o)$ is $1,1,2,3,4^\infty$. Because $U^o$ itself contains one sum indecomposable permutation of each odd length at least $7$, it follows that the sequence counting sum indecomposable permutations in $\Sub(U^o)$ is $1,1,2,3,4,4,5,4,5,4,\dots$. We can therefore construct sum closed subclasses of $\bigoplus\Sub(U^o)$ with $s_n$ sum indecomposable permutations of length $n$ for every sequence $(s_n)$ satisfying
\[
	(1,1,2,3,4^\infty)\preceq (s_n)\preceq (1,1,2,3,4,4,5,4,5,4,\dots).
\]
Our next result shows that each of these classes has a different growth rate.

\begin{proposition}
\label{prop-xi-uncountable}
For every different sequence $(s_n)$ satisfying
\[
	(1,1,2,3,4^\infty)\preceq (s_n)\preceq (1,1,2,3,4,4,5,4,5,4,\dots),
\]
the growth rate of $1/\left(1-\sum s_nx^n\right)$ is different.
\end{proposition}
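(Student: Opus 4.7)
The plan is to reduce the statement to one about roots of explicit power series, and then separate distinct sequences via a numerical bound. First, I observe that every admissible $(s_n)$ coincides with the minimal sequence $(1,1,2,3,4^\infty)$ except possibly at odd indices $n \geq 7$, where $s_n \in \{4, 5\}$. Admissible sequences therefore biject with subsets $S \subseteq \{7, 9, 11, \ldots\}$ via $S = \{n : s_n = 5\}$, and I write $F_S(x) = \sum_n s_n x^n$ for the corresponding series.

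Since $F_S$ has nonnegative coefficients and is continuous and strictly increasing on $(0,1)$ with $F_S(0) = 0$ and $F_S(x) \to \infty$ as $x \to 1^-$, there is a unique $\rho_S \in (0,1)$ with $F_S(\rho_S) = 1$. By the Exponential Growth Formula, the growth rate of $1/(1 - F_S(x))$ is then $1/\rho_S$. Monotonicity in the coefficients gives $\rho_S \leq \rho_\emptyset = 1/\xi$ for every admissible $S$, so in particular $\rho_S < 1/\sqrt{2}$, using the numerical inequality $\xi > \sqrt{2}$.

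The heart of the argument, and the main obstacle, is injectivity of the map $S \mapsto \rho_S$. I would argue by contradiction: assume $\rho := \rho_S = \rho_{S'}$ for $S \neq S'$. Then $\sum_n (s_n - s'_n)\rho^n = 0$, where the nonzero summands are supported on odd indices $n \geq 7$ with values in $\{-1, +1\}$. Let $m$ be the least such index, and WLOG let the coefficient at $\rho^m$ be $+1$. Then
\[
0 \;\geq\; \rho^m - \sum_{k \geq 1} \rho^{m+2k} \;=\; \rho^m - \frac{\rho^{m+2}}{1 - \rho^2},
\]
forcing $\rho^2/(1-\rho^2) \geq 1$, that is, $\rho \geq 1/\sqrt{2}$. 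This contradicts $\rho < 1/\sqrt{2}$ from the previous paragraph, so $S \mapsto \rho_S$ is injective. The numerical coincidence $\xi > \sqrt{2}$ is essential here: the separation argument would fail for a similar family whose minimal growth rate were below $\sqrt 2$.

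Finally, for the accumulation clause I note that for $S \subseteq \{N, N+2, \ldots\}$ we have $F_S(x) - F_\emptyset(x) \leq x^N/(1-x^2)$, which tends to zero uniformly on compact subsets of $[0,1)$ as $N \to \infty$. Hence $\rho_S \to 1/\xi$, and since there are $2^{\aleph_0}$ subsets of $\{N, N+2, \ldots\}$ for each fixed $N$ and the map $S \mapsto \rho_S$ is injective, every open neighborhood of $\xi$ contains uncountably many distinct growth rates of the specified form.
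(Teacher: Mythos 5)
Your proof is correct and is essentially the same argument as the paper's. The core of both is the observation that at the singularity $\rho$, a single $+1$ at the least odd index of disagreement cannot be canceled by all the $-1$'s at larger odd indices because $\rho^m > \rho^{m+2}/(1-\rho^2)$ once $\rho < 1/\sqrt{2}$; the paper packages this as a direct comparison of two bounding series (using the cruder cutoff $\rho < 1/2$, since the growth rates exceed $2$), whereas you phrase it as a contradiction starting from $\sum (s_n - s'_n)\rho^n = 0$ and correctly isolate $1/\sqrt{2}$ as the sharp threshold. Your explicit uniform-convergence argument for the accumulation clause (via $F_S - F_\emptyset \leq x^N/(1-x^2) \to 0$) fills in a step the paper leaves implicit, but it is the intended reasoning.
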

\begin{proof}
Let $(s_n)$ and $(t_n)$ be two such sequences and suppose that the least index for which they disagree is $2i+1$ (the cannot disagree at an even index). Without loss of generality we may assume that $s_{2i+1}=4$ while $t_{2i+1}=5$. Define
\[
\begin{array}{rcccccccccccccl}
	g(x)
	&=&
	s_1x&+&s_2x^2&+&\cdots&+&s_{2i}x^{2i}&+&4x^{2i+1}&+&4x^{2i+2}&+&\cdots\\
	&=&
	t_1x&+&t_2x^2&+&\cdots&+&t_{2i}x^{2i}&+&4x^{2i+1}&+&4x^{2i+2}&+&\cdots.
\end{array}
\]
The growth rate of the series expansion of $1/\left(1-\sum s_nx^n\right)$ is therefore at most the growth rate of the series expansion of
\[
	\frac{1}{1-\left(g(x)+\frac{x^{2i+3}}{1-x^2}\right)}
\]
while the growth rate of the series expansion of $1/\left(1-\sum t_nx^n\right)$ is at least the growth rate of the series expansion of
\[
	\frac{1}{1-\left(g(x)+x^{2i+1}\right)}.
\]
We claim that the first quantity is smaller than the second. Because both growth rates are known to be at least $2$, it suffices to consider $0<x<\nicefrac{1}{2}$. For these values of $x$, we have
\[
	\frac{x^{2i+3}}{1-x^2}
	<
	\frac{4}{3}x^{2i+3}
	<
	\frac{1}{3}x^{2i+1}
	<
	x^{2i+1}.
\]
Therefore the smallest real singularity of $1/\left(1-\sum s_nx^n\right)$ is greater than the smallest real singularity of $1/\left(1-\sum t_nx^n\right)$, implying the inequality of growth rates claimed above.
\end{proof}

Proposition~\ref{prop-xi-uncountable} and our previous observations imply the following, first established in~\cite{vatter:permutation-cla:lambda:}.

\begin{proposition}
\label{prop-xi-open-neighborhood}
Every open neighborhood of $\xi\approx 2.30522$ contains uncountably many growth rates of permutation classes.
\end{proposition}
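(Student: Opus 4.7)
The plan is to extract the accumulation statement directly from the preceding Proposition~\ref{prop-xi-uncountable} by restricting attention to an uncountable subfamily of realizable sequences whose generating functions are forced to be close to that of the lower bound. For each integer $M\ge 5$, let $\mathcal{F}_M$ consist of those sequences $(s_n)$ satisfying the two-sided bound of Proposition~\ref{prop-xi-uncountable} and agreeing with the lower bound $(1,1,2,3,4^{\infty})$ in every position $n\le M$. The only free positions are the odd indices $n>M$ with $n\ge 7$, each independently equal to $4$ or $5$, so $\mathcal{F}_M$ is uncountable, and Proposition~\ref{prop-xi-uncountable} already guarantees that distinct members of $\mathcal{F}_M$ produce distinct growth rates.

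It remains to verify that as $M$ grows, these growth rates cluster near $\xi$. Write $L(x)=x+x^2+2x^3+3x^4+4x^5/(1-x)$ for the generating function of the lower bound, whose least positive root of $1-L(x)=0$ is $1/\xi$ by the computation preceding Proposition~\ref{prop-xi-uncountable}. For any $(s_n)\in\mathcal{F}_M$ the series $G_s(x)=\sum s_n x^n$ differs from $L(x)$ by at most one extra $x^n$ per odd $n>M$, yielding the uniform bound
\[
    L(x)\;\le\;G_s(x)\;\le\;L(x)+\frac{x^{M+1}}{1-x^2}\qquad(0<x<1).
\]
By Proposition~\ref{prop-enum-oplus-closure} and the Exponential Growth Formula, the growth rate attached to $(s_n)$ is the reciprocal of the least positive root $\rho_s$ of $1=G_s(x)$, and the sandwich inequality yields $\rho_M^{*}\le \rho_s\le 1/\xi$, where $\rho_M^{*}$ denotes the corresponding root for the perturbed equation $1=L(x)+x^{M+1}/(1-x^2)$.

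The final step is a routine perturbative estimate: since $L$ is analytic near $1/\xi$ with $L'(1/\xi)>0$ and the perturbation $x^{M+1}/(1-x^2)$ tends to $0$ uniformly on any compact subinterval of $[0,1)$ as $M\to\infty$, we have $\rho_M^{*}\to 1/\xi$, and therefore $1/\rho_M^{*}\to \xi$. Given any $\epsilon>0$, choosing $M$ so large that $1/\rho_M^{*}<\xi+\epsilon$ places every growth rate arising from $\mathcal{F}_M$ inside $[\xi,\xi+\epsilon)$, exhibiting uncountably many in that neighborhood. The only step that requires a bit of care is this continuity estimate, but it poses no real obstacle; the genuine work has already been carried out in Proposition~\ref{prop-xi-uncountable}.
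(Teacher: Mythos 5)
Your proposal is correct. The paper gives no separate proof of Proposition~\ref{prop-xi-open-neighborhood}; it is stated as an immediate consequence of Proposition~\ref{prop-xi-uncountable}, whose proof in the paper establishes only the injectivity of the map $(s_n)\mapsto$ growth rate and leaves the accumulation-at-$\xi$ step to the reader. Your argument --- restrict to sequences $\mathcal{F}_M$ agreeing with $(1,1,2,3,4^\infty)$ through index $M$, note that the free odd positions beyond $M$ still give uncountably many choices, and use the sandwich $L(x)\le G_s(x)\le L(x)+x^{M+1}/(1-x^2)$ together with $\rho_M^*\to 1/\xi$ to trap all the resulting growth rates in $[\xi,\xi+\epsilon)$ --- is exactly the natural way to fill that gap, and it is sound. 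One point worth stating explicitly rather than implicitly: passing from ``growth rate of $1/(1-\sum s_nx^n)$'' to ``growth rate of a permutation class'' requires that each $(s_n)\in\mathcal{F}_M$ is realized by an actual class, which the paper supplies just before Proposition~\ref{prop-xi-uncountable} via the subclasses $\bigoplus\bigl(\Sub(U^o)\setminus T\bigr)$ for $T\subseteq U^o$.
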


\section{Reconstruction}
\label{sec-reconstruction}

As established by Theorem~\ref{thm-xi-main}, the study of growth rates under $\xi$ is intimately related to the properties of sum indecomposable permutations. Here we establish a general result about sum indecomposable permutations themselves which we apply in the next section to restrict sequences of sum indecomposable permutations in permutation classes.

The permutation $\pi$ is said to be \emph{set reconstructible} if it is the unique permutation with its set of children. Smith~\cite{smith:permutation-rec:} was the first to establish that all permutations of length at least $5$ are set reconstructible (this is the permutation analogue of the graph-theoretic Set Reconstruction Conjecture first stated by Harary~\cite{harary:on-the-reconstr:}), and shortly thereafter Raykova~\cite{raykova:permutation-rec:} gave a simpler proof. Among several examples, the two increasing oscillations of length $4$, $2413$ and $3142$, show that this result is best possible.

\begin{theorem}[Smith~\cite{smith:permutation-rec:}]
\label{thm-reconstruction}
Every permutation of length at least $5$ is uniquely determined by its set of children.
\end{theorem}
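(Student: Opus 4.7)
The natural approach is induction on the length $n$, with the base case $n=5$ verified by direct enumeration of the $120$ permutations of length $5$. This is where the bound is sharp: at length $4$, the two increasing oscillations $2413$ and $3142$ share the same set of children, showing that $n=5$ is the smallest viable starting point. At length $3$ one has the similar collision $\{132,231\}$, confirming that no weaker hypothesis will suffice.

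For the inductive step with $n \geq 6$, suppose $\pi$ and $\pi'$ of length $n$ have the same set of children $S$. I would first recover enough of the substitution decomposition of $\pi$ from $S$ to reduce to a core (near-)simple case. Since $G_{\pi-x}$ is an induced subgraph of $G_\pi$, and sum indecomposability corresponds to connectedness of the inversion graph (as noted in Section~\ref{sec-monoints-graphs}), whether $\pi$ is sum decomposable can be read off from $S$. If $\pi=\alpha_1 \oplus \cdots \oplus \alpha_k$ with $k \geq 2$, the children group naturally according to which $\alpha_i$ has been perturbed, and the inductive hypothesis applied componentwise (each $\alpha_i$ has length strictly less than $n$) recovers the individual $\alpha_i$. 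The skew decomposable case is symmetric.

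The heart of the proof is the case when $\pi$ is both sum and skew indecomposable. Here I would use Proposition~\ref{prop-child-mono} to translate the problem into one about monotone intervals: the number of distinct children in $S$ equals the length of the monotone quotient $\mu$ of $\pi$, and the ``missing'' multiplicities in passing from the set to the full deck correspond to the sizes of the monotone intervals. By examining, for each $C \in S$, the candidate insertions of a single entry into $C$ which produce a permutation whose own set of children is contained in $S$, one constrains both $\mu$ and its interval sizes tightly enough to reconstruct $\pi$. Proposition~\ref{prop-non-path-child} is useful here: it identifies the short list of sum indecomposable permutations all of whose sum indecomposable children are increasing oscillations, which must be handled as explicit special cases alongside the increasing oscillations themselves.

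The main obstacle I anticipate is disambiguating $\pi$ from $\pi^{-1}$ in the simple, sum-indecomposable case, since $G_\pi \cong G_{\pi^{-1}}$ and so abstract graph reconstruction cannot distinguish them. The point is that the children of $\pi$ are themselves \emph{permutations}, not merely inversion graphs, so for $n \geq 5$ the decks of $\pi$ and $\pi^{-1}$ carry positional information beyond their isomorphism type. Verifying that this positional information survives the passage from deck to set for $n \geq 5$ — and, in particular, ruling out the kind of coincidental collision that occurs at $n=4$ — is the crux of the argument; it is precisely here that the hypothesis $n \geq 5$ should be invoked most delicately.
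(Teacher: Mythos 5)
The paper does not prove this theorem; it cites it to Smith and remarks that Raykova gave a simpler proof, which is later \emph{adapted} (not reproduced) for the sum-indecomposable variant in Proposition~\ref{prop-no-problems}. So there is no in-paper proof to compare against. Taking your sketch on its own terms, the overall shape (base case by enumeration, induction on $n$, and a case split on the substitution decomposition) is reasonable and is genuinely different from the Raykova-style argument the paper later adapts --- that argument instead compares the specific children $\pi-n$ and $\pi-\pi(n)$, recognizes and dispatches a few special configurations up front, and closes the induction via a short contradiction showing that if $\pi-n\neq\tau-n$ then some $\pi-x$ lies in $K(\pi)\setminus K(\tau)$. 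Your route is more in the spirit of graph-reconstruction proofs that reduce to the 2-connected case.

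However, two of the steps in your sketch are not yet proofs. In the sum-decomposable case, ``apply induction componentwise'' breaks down whenever a component $\alpha_i$ has length at most $4$, which is exactly where the known collisions live ($2413$ vs.\ $3142$, $132$ vs.\ $231$); you would need a separate argument (e.g., isolating a child obtained by deleting an entire short component, or arguing from the lengths and multiplicities of children of distinct sum types) to pin those components down, and also to justify that children of $\pi$ can be unambiguously attributed to a single component when deletion changes the number of sum blocks. More seriously, the ``heart of the proof'' --- the sum- and skew-indecomposable case --- is described only as ``examine candidate insertions of a single entry into $C$ which produce a permutation whose children lie in $S$,'' which restates the reconstruction problem rather than solving it. Proposition~\ref{prop-child-mono} does tell you $|S|$ equals the length of the monotone quotient, and Proposition~\ref{prop-non-path-child} does isolate the exceptional permutations whose sum-indecomposable children are all increasing oscillations, but neither gives you the mechanism that actually forces uniqueness of $\pi$; in particular your final paragraph explicitly concedes that the $\pi$ versus $\pi^{-1}$ disambiguation --- the crux, as you say --- remains to be done. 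As it stands the proposal is a plausible plan of attack but not a proof, and the parts left open are exactly the parts that make the theorem nontrivial.
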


We establish an analogous result for sum indecomposable permutations here. To this end, given a sum indecomposable permutation $\pi$ we let $K(\pi)$ denote its set of \emph{sum indecomposable} children,
\[
	K(\pi)=\{\pi-x \st \text{$x$ is an entry of $\pi$ and $\pi-x$ is sum indecomposable}\}.
\]
We also define a closely related set, consisting of the sum indecomposable permutations with at most $m$ sum indecomposable children,
\[
	K^{(m)}
	=
	\{\text{sum indecomposable }\pi \st |K(\pi)|\le m\}.
\]

The main result of this section, below, establishes that $K(\pi)$ uniquely determines $\pi$ except in one notable case.

\begin{theorem}
\label{thm-reconstruction-K}
Every sum indecomposable permutation of length at least $5$ that is not an increasing oscillation is uniquely determined by its set of sum indecomposable children $K(\pi)$.
\end{theorem}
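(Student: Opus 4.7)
The plan is to reduce Theorem~\ref{thm-reconstruction-K} to Smith's Theorem~\ref{thm-reconstruction}. Suppose $\pi$ and $\pi'$ are sum indecomposable permutations of length $n\ge 5$, neither an increasing oscillation, with $K(\pi)=K(\pi')$. Write $D(\pi)$ for the full set of children of $\pi$. Smith's theorem yields $\pi=\pi'$ as soon as $D(\pi)=D(\pi')$, and since the sum indecomposable children already coincide by hypothesis, the whole task is to show that the \emph{sum decomposable} children of $\pi$ can be recovered from $K(\pi)$ alone.

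Via the inversion graph correspondence, a child $\pi-x$ is sum decomposable precisely when $x$ is a cut vertex of $G_\pi$, in which case the sum components of $\pi-x$ are read off from the connected components of $G_\pi-x$ in the order their vertices appear in $\pi$. Since the length hypothesis $n\ge 5$ rules out the small-length exceptions, Proposition~\ref{prop-non-path-child} furnishes an anchor $\rho=\pi-x_0\in K(\pi)$ that is itself not an increasing oscillation. I would then view $\pi$ as the single-entry extension of $\rho$ obtained by restoring $x_0$, and try to show that for each sum decomposable candidate $\tau$ of length $n-1$, whether $\tau\in D(\pi)$ is decidable from $K(\pi)$. The tool is grandchild cross-checking: for every other $\rho'=\pi-x'\in K(\pi)$, the common grandchildren of $\pi$ reached through $\rho$ and through $\rho'$ are forced, and any putative $\tau=\pi-y$ with $y$ a cut vertex must be consistent with these forced grandchildren together with the induced-subgraph constraints on $G_\pi$ imposed by the members of $K(\pi)$.

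The main obstacle is the degeneracy of Proposition~\ref{prop-child-mono}: entries lying in a common monotone interval of $\pi$ yield identical children, so $K(\pi)$ loses multiplicity information that a naive reconstruction would want to exploit. The argument must therefore be supplemented by a careful analysis of the monotone quotient of $\pi$, using the non-oscillating anchor $\rho$ to pin down the positions of its monotone runs and to rule out alternative sum decompositions of candidate children. The role of the excluded increasing oscillations is transparent in this framework: when $G_\pi$ is a path, Proposition~\ref{prop-non-path-child} provides no non-oscillating anchor, $K(\pi)$ collapses to at most the two increasing oscillations of length $n-1$, and $\pi$ cannot be distinguished from its inverse---precisely the advertised exception.
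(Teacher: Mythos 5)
Your plan is to reduce the theorem to Smith's reconstruction theorem by arguing that the full child set $D(\pi)$ can be recovered from $K(\pi)$, i.e.\ that the sum \emph{decomposable} children are forced. This is a genuinely different strategy from the paper's, which never attempts to recover $D(\pi)$. Instead the paper proceeds in stages: it characterizes $K^{(1)}$ and $K^{(2)}$ directly (Propositions~\ref{prop-K1}--\ref{prop-K2-recon}), then shows that one can recognize from $K(\pi)$ whether $\pi-n$ is sum decomposable and reconstruct $\pi$ in that case (Propositions~\ref{prop-recognize-not-SI} and \ref{prop-reconstruct-not-SI}), and finally handles the generic case where both $\pi-n$ and $\pi-\pi(n)$ are sum indecomposable by an induction \`a la Raykova (Proposition~\ref{prop-no-problems}): if $K(\pi)=K(\tau)$ but $\pi-n\neq\tau-n$, then by the induction hypothesis $K(\pi-n)\neq K(\tau-n)$, and a $\sigma$ in the symmetric difference is lifted to a member of $K(\pi)\setminus K(\tau)$, a contradiction. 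So the paper argues \emph{inductively through grandchildren} rather than reconstructing $D(\pi)$ and invoking Smith.

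The problem is that your version of the key step is not actually carried out. Everything hinges on the claim that membership of a sum decomposable candidate $\tau$ in $D(\pi)$ is decidable from $K(\pi)$, and the mechanism offered for this is ``grandchild cross-checking'' and a ``careful analysis of the monotone quotient.'' These are names for arguments, not arguments: you do not exhibit the cross-check that forces a particular decomposable child in or out, nor do you say how the monotone-quotient degeneracy (Proposition~\ref{prop-child-mono}) is actually resolved rather than merely acknowledged. Note too that the recoverability of $D(\pi)$ from $K(\pi)$ is a strictly stronger statement than the theorem itself (it implies $\pi$ is determined, since $D(\pi)$ determines $\pi$ by Smith), so it cannot be waved away --- and it is far from clear that it even holds in a form provable without already knowing the theorem. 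Your diagnosis of the increasing-oscillation exception is correct, and the reduction-to-Smith idea is a reasonable first instinct, but as written this is a proof outline with its central lemma missing, not a proof.
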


We prove Theorem~\ref{thm-reconstruction-K} via a sequence of propositions. The last of these, Proposition~\ref{prop-no-problems}, represents the majority of the cases and its proof is an adaptation of Raykova's proof of Theorem~\ref{thm-reconstruction}. Before that, we establish the result for the members of $K^{(2)}$, and then for the sum indecomposable permutations $\pi$ of length $n$ such that $\pi-n$ is sum \emph{decomposable}. We begin by characterizing $K^{(1)}$ and $K^{(2)}$.

\begin{proposition}
\label{prop-K1}
For $n\ge 3$, the permutations of length $n$ in $K^{(1)}$ are $n\cdots 21$, $1\ominus (12\cdots (n-1))$, and $(12\cdots (n-1))\ominus 1$.
\end{proposition}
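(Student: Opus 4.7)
The plan is to analyze $\pi$ via its monotone quotient. Let $\mu$ be the monotone quotient of $\pi$ and write $\pi = \mu[\alpha_1,\ldots,\alpha_m]$ with each $\alpha_i$ a maximal monotone interval of $\pi$. By Proposition~\ref{prop-child-mono}, the distinct children of $\pi$ are in bijection with the $m$ intervals: each $\alpha_i$ produces a single child $\pi_i$, and $\pi_i\ne\pi_{i'}$ whenever $i\ne i'$. I would first verify directly that each listed permutation lies in $K^{(1)}$. For $n\cdots 21$, every child equals $(n-1)\cdots 21$. For $1\ominus(12\cdots(n-1)) = n,1,2,\ldots,n-1$, the two intervals yield the sum-decomposable child $12\cdots(n-1)$ (from removing the lone top entry) and the sum-indecomposable child $1\ominus(12\cdots(n-2))$ (from removing any entry of the long increasing bag). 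The case of $(12\cdots(n-1))\ominus 1$ is symmetric.

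For the converse, I would argue by cases on the bag sizes. Whenever $|\alpha_i|\ge 2$, shrinking $\alpha_i$ leaves an inflation of $\mu$ by nonempty monotone parts, which is sum indecomposable because $\mu$ is. So if two different bags have size $\ge 2$, the corresponding two distinct children are both sum indecomposable, forcing $|K(\pi)|\ge 2$. Thus at most one $\alpha_j$ has size $\ge 2$. If $m=1$, then $\pi$ is monotone and sum indecomposability forces $\pi = n\cdots 21$. If all $\alpha_i$ have size $1$, so $\pi = \mu$, then Proposition~\ref{prop-not-cut} gives at least two non-cut vertices of $G_\pi$, lying in distinct (singleton) intervals and hence producing two distinct sum indecomposable children. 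So exactly one $\alpha_j$ has size $\ge 2$.

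When $m=2$, $\mu=21$, and for $\mu$ to be the actual monotone quotient of $\pi$ (not to collapse to $1$) the large bag $\alpha_j$ must be increasing --- otherwise $\pi$ is entirely decreasing, with quotient $1$. This yields precisely $n,1,2,\ldots,n-1$ (when $j=2$) and $2,3,\ldots,n,1$ (when $j=1$). When $m\ge 3$, Proposition~\ref{prop-not-cut} provides at least two non-cut vertices of $G_\mu$. A short case analysis produces some $k\ne j$ whose $k$-child is sum indecomposable: the condition is that the entry $\mu(k)$ be a non-cut vertex of $G_\mu$ and, when $\alpha_j$ is increasing (so the bag $\alpha_j$ is an independent set in $G_\pi$), that $\mu(j)$ retain at least one neighbor in $G_{\mu-\mu(k)}$. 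The latter is handled by observing that if $\mu(j)$ is a leaf of $G_\mu$ then its unique neighbor must itself be a cut vertex whenever $m\ge 3$ (removing it isolates $\mu(j)$ from the remaining $\ge 2$ vertices), so the non-cut condition selects a $k$ whose $\mu(k)$ differs from $\mu(j)$'s neighbor. By Proposition~\ref{prop-child-mono} this $k$-child is distinct from the $j$-child, so $|K(\pi)|\ge 2$, contradiction. The main obstacle is this final case: sum indecomposability of the $k$-child depends not only on a non-cut condition in $G_\mu$ but also on a neighbor-retention condition for the independent-set bag $\alpha_j$, and one must verify that both can be met simultaneously by some $k\ne j$.
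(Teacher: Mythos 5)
Your argument is correct and follows essentially the same route as the paper's: both analyze $\pi$ through its monotone quotient $\mu$ and use Proposition~\ref{prop-not-cut} to manufacture a second sum indecomposable child whenever $\mu$ is too long. The one place you labour harder than necessary is the $m\ge 3$ case --- the ``neighbor-retention'' condition you flag as the main obstacle is automatic. Once $\mu(k)$ is a non-cut vertex of $G_\mu$ and $m\ge 3$, the permutation $\mu-\mu(k)$ is sum indecomposable of length at least $2$, and inflating any sum indecomposable permutation of length at least $2$ by nonempty intervals is always sum indecomposable: in the inversion graph each bag is completely joined to every bag corresponding to a neighbour of its quotient vertex, so $\alpha_j$ retains a neighbour for free regardless of whether it is an independent set. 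The paper's version dodges your bag-size case split entirely: for $|\mu|\ge 3$ it deletes from $\pi$ two entries corresponding to two non-cut vertices of $G_\mu$, and each deletion --- whether it shrinks a long bag or removes a singleton --- yields a sum indecomposable child, the two being distinct by Proposition~\ref{prop-child-mono}.
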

\begin{proof}
Let $\pi\in K^{(1)}$ have monotone quotient $\mu$. Our first goal is to show that $\mu$ must be $1$ or $21$. Indeed, if $|\mu|\ge 3$, then Proposition~\ref{prop-not-cut} shows that the inversion graph $G_\mu$ has at least two non-cut vertices. Label the corresponding entries of $\mu$ as $\mu(i)$ and $\mu(j)$. By deleting entries of $\pi$ corresponding to $\mu(i)$ and $\mu(j)$ we obtain two distinct sum indecomposable children of $\pi$, a contradiction.

If $\mu=1$ then $\pi$ must be decreasing because it is sum indecomposable. In the case where $\mu=21$ we see that at most one of the entries of $\mu$ may be inflated, as otherwise $\pi$ would have more than one sum indecomposable child, completing the proof.
\end{proof}

\begin{proposition}
\label{prop-K2}
The set $K^{(2)}\setminus K^{(1)}$ consists of the sum indecomposable permutations formed by inflating the leaves of an increasing oscillation by monotone intervals.
\end{proposition}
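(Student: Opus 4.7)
The plan is to analyze $\pi \in K^{(2)} \setminus K^{(1)}$ through its monotone quotient, writing $\pi = \mu[\alpha_1, \ldots, \alpha_m]$ so that the $\alpha_i$ are its maximal monotone intervals. First I would eliminate the case $|\mu| = 1$: here $\pi$ is monotone, and sum indecomposability forces $\pi = n(n-1)\cdots 1$, which lies in $K^{(1)}$ by Proposition~\ref{prop-K1}, contradicting $\pi \notin K^{(1)}$. Hence $|\mu| \geq 2$ from now on.

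The core of the argument is a counting identity, which I would establish under the assumption $|\mu| \geq 3$:
\[
	\lvert K(\pi)\rvert
	=
	\#\{i : |\alpha_i| \geq 2\}
	+
	\#\{i : |\alpha_i| = 1 \text{ and $i$ is a non-cut vertex of $G_\mu$}\}.
\]
The enumerated children are distinct by Proposition~\ref{prop-child-mono}. For the first term, deleting any entry from an $\alpha_i$ of length at least $2$ preserves the blow-up structure of $G_\pi$, so $G_{\pi-x}$ remains connected and $\pi-x$ is sum indecomposable. For the second, deleting the single entry of a singleton $\alpha_i$ is equivalent to removing the whole vertex $i$ from $G_\mu$; since $|\mu|-1 \geq 2$, the graph $G_\mu - i$ is connected iff $i$ is non-cut, and in that case contains an edge, which forces the blow-up $G_{\pi - x}$ to be connected. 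A deletion at a cut-vertex singleton yields a disconnected blow-up and hence a sum decomposable child, so these do not contribute.

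Setting $\lvert K(\pi)\rvert = 2$ and writing $n_{\mathrm{nc}}$ for the number of non-cut vertices of $G_\mu$ and $a_c$ for the number of inflated cut vertices, the identity simplifies to $a_c + n_{\mathrm{nc}} = 2$. Proposition~\ref{prop-not-cut} gives $n_{\mathrm{nc}} \geq 2$, with equality precisely when $G_\mu$ is a path, so we must have $n_{\mathrm{nc}} = 2$ and $a_c = 0$. That is, $\mu$ is an increasing oscillation and only its two leaves may be inflated, which is the claimed form. The boundary case $|\mu| = 2$ is handled by hand: here $\mu = 21$ is itself an increasing oscillation whose two vertices are its only leaves, and a direct check of the pairs $(|\alpha_1|, |\alpha_2|)$ shows that $\pi \in K^{(2)} \setminus K^{(1)}$ exactly when both inflations have length at least $2$ and at least one is increasing (the excluded case $21[k\cdots 1, \ell\cdots 1]$ being the decreasing permutation, which lies in $K^{(1)}$). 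The converse direction follows by running the same counting identity backwards to show any such inflation has precisely two sum indecomposable children.

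I expect the main obstacle to be the bookkeeping around the \emph{collapse} phenomenon: an inflation of the leaves of a longer oscillation can merge adjacent entries into a single longer monotone interval, shortening the true monotone quotient and potentially pushing $\pi$ back into $K^{(1)}$ (for example, $231[12,1,1] = 2341$ has true monotone quotient $21$ and in fact sits in $K^{(1)}$). The cleanest way to sidestep this trap is to always argue about the true monotone quotient of $\pi$, so that the maximality of the $\alpha_i$ rules out such collapses by construction, and to verify the low-dimensional boundary $|\mu| \in \{1,2\}$ separately against the explicit list of Proposition~\ref{prop-K1}.
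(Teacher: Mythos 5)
Your proof is correct and follows essentially the same approach as the paper's: analyze the true monotone quotient $\mu$ of $\pi$, observe that $G_\pi$ is a blow-up of $G_\mu$, and relate sum indecomposable children of $\pi$ to non-cut vertices of $G_\mu$ via Proposition~\ref{prop-child-mono}, concluding from Proposition~\ref{prop-not-cut} that $G_\mu$ is a path. The main stylistic difference is that you package the two ad hoc inequality arguments of the paper (``three non-cut vertices give three children'' and ``an inflated non-leaf gives a third child'') into a single exact identity $|K(\pi)| = a_c + n_{\mathrm{nc}}$ for $|\mu| \ge 3$, which is a slightly cleaner piece of bookkeeping and makes the converse direction immediate; the paper instead argues each implication separately. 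Your caution about the collapse phenomenon and the $|\mu|\le 2$ boundary is warranted: the paper's proposition is stated a bit loosely (as you note, permutations such as $2341 = 231[12,1,1]$ or $231 = 21[12,1]$ are ``inflations of leaves of an increasing oscillation'' yet lie in $K^{(1)}$), and the paper's converse paragraph only shows $\pi \in K^{(2)}$, not $\pi \notin K^{(1)}$. This imprecision is harmless because the downstream use (Proposition~\ref{prop-K2-recon}) needs only the forward containment, but your carve-out of the $|\mu|=2$ case and the insistence on working with the \emph{true} monotone quotient are exactly the right fixes.
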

\begin{proof}
Take $\pi \in K^{(2)}$ and let $\mu$ denote the monotone quotient of $\pi$. The inversion graph $G_\mu$ must have at most two non-cut vertices, for if $G_\mu$ had three non-cut vertices then deleting corresponding entries in $\pi$ would give three distinct sum indecomposable children.

Proposition~\ref{prop-not-cut} states that a connected graph has at most two non-cut vertices if and only if it is a path. Thus $\mu$ must be an increasing oscillation. If a non-leaf of $\mu$ is inflated to form $\pi$ then $\pi$ has at least three distinct sum indecomposable children: two from deleting or shrinking a leaf (depending on whether the leaf was inflated), and one from shrinking the inflated non-leaf. Therefore only leaves of $\mu$ may be inflated, and they may only be inflated by monotone intervals because $\mu$ is the monotone quotient of $\pi$. To complete the proof, note that if $\pi$ is formed by inflating the leaves of an increasing oscillation then $\pi$ has at most two sum indecomposable children, each formed by deleting an entry corresponding to a leaf of $\mu$.
\end{proof}

Our next result allows us to reconstruct most members of $K^{(2)}$ from their sets of sum indecomposable children.

\begin{proposition}
\label{prop-K2-recon}
Let $\pi\in K^{(2)}$ have length at least $5$. From $K(\pi)$ one can determine whether $\pi$ is an increasing oscillation. Moreover, if $\pi$ is not an increasing oscillation, it is uniquely determined by $K(\pi)$.
\end{proposition}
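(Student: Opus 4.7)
Since $\pi$ is sum indecomposable, Proposition~\ref{prop-not-cut} gives $|K(\pi)|\ge 1$, while $\pi\in K^{(2)}$ gives $|K(\pi)|\le 2$. If $|K(\pi)|=1$, then $\pi\in K^{(1)}$, and by Proposition~\ref{prop-K1} it is one of $n\cdots 21$, $1\ominus(12\cdots(n-1))$, or $(12\cdots(n-1))\ominus 1$, whose unique sum indecomposable children are respectively $(n-1)\cdots 21$, $1\ominus(12\cdots(n-2))$, and $(12\cdots(n-2))\ominus 1$. These three children are pairwise distinct for $n\ge 5$, so $K(\pi)$ identifies $\pi$; moreover the inversion graphs of these three permutations are $K_n$ and $K_{1,n-1}$, none a path for $n\ge 5$, so none is an increasing oscillation.

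If $|K(\pi)|=2$, then $\pi\in K^{(2)}\setminus K^{(1)}$, and Proposition~\ref{prop-K2} lets me write $\pi=\mu[\alpha_1,1,\ldots,1,\alpha_m]$ where $\mu$ is an oscillation of length $m\ge 3$ with leaves $v_1,v_m$ and $\alpha_1,\alpha_m$ are monotone intervals of lengths $a,b\ge 1$ in directions $d_1,d_m$. By Proposition~\ref{prop-child-mono} the two sum indecomposable children $C_1,C_2$ arise by shrinking $\alpha_1$ or $\alpha_m$ by a single entry. For part (a), I claim $\pi$ is an oscillation iff both of $C_1,C_2$ are. The forward direction is immediate since $a=b=1$ yields $C_i=\mu-v_i$. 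For the converse assume $a\ge 2$; then the child obtained by shrinking $\alpha_m$ retains the full length-$a$ inflation at $v_1$, so the unique neighbor of the inflated entries in its inversion graph has degree $\ge 3$ and the graph is not a path.

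For part (b), I reconstruct non-oscillation $\pi$ from $K(\pi)$ via monotone quotients. Shrinking an inflated leaf preserves $\mu$ as monotone quotient (length $m$), whereas removing a non-inflated leaf shortens it, so $\mu$ is recovered as the MQ of maximal length among $C_1,C_2$. If both children have MQ of length $m$ (both leaves inflated, $a,b\ge 2$), their inflation profiles $(a-1,b)$ and $(a,b-1)$ jointly determine $a,b,d_1,d_m$. If only one child has MQ of length $m$ (one leaf inflated, say $b=1$), that child reveals $\mu$ and the inflation $(a-1,1)$, while the other child --- an inflation of $\mu-v_m$ at $v_1$ by length $a$ --- identifies $v_m$ and the direction $d_1$. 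Inflation directions are visible in the inversion graphs of the children: an increasing inflation of length $k$ contributes $k$ new vertices forming an independent set, a decreasing inflation contributes a clique on $k$ vertices, each attached to the neighboring interior vertex of $\mu$.

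The main obstacle I anticipate is that the MQ of the ``remove non-inflated leaf'' child may collapse below the expected $m-1$ when the inflation direction matches an adjacent interior entry (for instance $312[1,1,12]=4123$ has MQ $21$ rather than $312$). The plan is to read the needed information --- which leaf carries the inflation and in what direction --- from the inversion graph of the child rather than from its MQ, so the collapse does not block the reconstruction. The length hypothesis $n\ge 5$ is essential: it rules out the genuinely ambiguous $n=4$ case where $3241$ and $2431$ both have $K=\{231,321\}$, and a finite case check will confirm no such collision survives once $n\ge 5$.
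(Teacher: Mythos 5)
Your high-level strategy matches the paper's: dispose of the $K^{(1)}$ case separately, then use the Proposition~\ref{prop-K2} description of $\pi$ as a leaf-inflated increasing oscillation to recover it from its two sum indecomposable children. The details, however, contain two genuine gaps.

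\textbf{Part (a).} Your degree argument presumes a monotone quotient $\mu$ of length $m\ge 3$ with a unique interior vertex adjacent to the inflated leaf. But $\mu$ can have length $2$: for instance $45321=21[12,321]$ and $34512=21[123,12]$ lie in $K^{(2)}\setminus K^{(1)}$ and have monotone quotient $21$, which has no interior vertex at all. In that case the phrase ``the unique neighbor of the inflated entries'' has no referent and the stated degree bound does not follow from the argument you give. The conclusion happens to be true for $|\pi|\ge 5$, but a separate check is needed for $m=2$. The paper sidesteps this entirely by invoking Proposition~\ref{prop-non-path-child}: if both children were paths, $\pi$ would have to be a path, a cycle, or a claw, and length $\ge 5$ rules out the cycle and claw exceptions.

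\textbf{Part (b).} The assertion ``shrinking an inflated leaf preserves $\mu$ as monotone quotient'' is false. Take $\pi=231[12,21,1]=23541\in K^{(2)}$, for which $K(\pi)=\{2431,2341\}$ and \emph{both} leaves of $\mu=231$ are inflated. Shrinking the right leaf yields $2341=231[12,1,1]$, whose monotone quotient is $21$, not $231$: once $v_m$ becomes a trivial interval, the remaining increasing $\alpha_1$ absorbs it, shortening the quotient. Consequently your dichotomy ``both children have MQ of length $m$ if and only if both leaves are inflated'' fails, and the subsequent case analysis of the reconstruction built on it collapses with it. You anticipated a quotient collapse, but only for the ``remove non-inflated leaf'' child; this example shows it also occurs when a child comes from shrinking an inflated leaf, and the proposed fallback of reading directions off the inversion graph is not developed far enough to rescue the argument (note e.g.\ that $G_{2341}\cong G_{4123}$, so the child's inversion graph alone cannot name the ambient $\mu$). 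The paper's proof is terse here too, but it correctly frames the reconstruction as reading both inflations off the pair of children together, rather than resting on the false invariance of the monotone quotient under shrinking.
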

\begin{proof}
Obviously, we can determine from $K(\pi)$ whether $\pi\in K^{(1)}$ or $\pi\in K^{(2)}\setminus K^{(1)}$. If $\pi\in K^{(1)}$ then it is one of the three permutations specified by Proposition~\ref{prop-K1} and determining which is trivial. Suppose that $\pi\in K^{(2)}\setminus K^{(1)}$. If both members of $K(\pi)$ are increasing oscillations then $\pi$ must be an increasing oscillation by Proposition~\ref{prop-non-path-child}, but we cannot determine which of the two increasing oscillations it is. Otherwise at least one permutation in $K(\pi)$ is not an increasing oscillation, and thus by Proposition~\ref{prop-K2}, $\pi$ is the result of inflating the leaves of an increasing oscillation by monotone intervals. By examining the corresponding leaves in both children we can determine the monotone quotient of $\pi$, and then the two children together determine $\pi$ uniquely.
\end{proof}

Our next goal is to recognize from $K(\pi)$ for permutations $\pi$ of length $n$ whether $\pi-n$ is sum decomposable, and then to reconstruct those permutations from their sets of sum indecomposable children. In the proofs of these results we make frequent use of the following observation.

\begin{proposition}
\label{prop-delete}
Suppose that $\pi$ is a sum indecomposable permutation of length $n$.
\begin{enumerate}
\item[(a)] If $\pi$ contains at least one entry to the left of $n$ then there is an entry $x$ to the left of $n$ such that $\pi-x$ is sum indecomposable.	
\item[(b)] If $\pi$ contains at least two entries to the right of $n$ then there is an entry $x$ to the right of $n$ such that $\pi-x$ is sum indecomposable.
\end{enumerate}
\end{proposition}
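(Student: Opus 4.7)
My plan is to use the inversion graph $G_\pi$, which is connected because $\pi$ is sum indecomposable. Let $q$ denote the position of $n$ in $\pi$, $L$ the set of entries to the left of $n$, and $R$ the set of entries to the right. The key observation underlying both parts is that $n$ is adjacent in $G_\pi$ to every entry in $R$ (since positions $q$ and $i > q$ always form an inversion, as $\pi(q) = n > \pi(i)$) but to no entry in $L$. Consequently, $R \cup \{n\}$ induces a connected subgraph of $G_\pi$, namely a star through $n$.

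For part (a), I would contract $R \cup \{n\}$ to a single vertex $r$, producing a graph $G'$ on the vertex set $L \cup \{r\}$ that inherits connectedness from $G_\pi$ and has at least two vertices (since $|L| \ge 1$). Proposition~\ref{prop-not-cut} then supplies at least two non-cut vertices of $G'$; as $r$ accounts for at most one of them, some non-cut vertex $x$ lies in $L$. It remains to check that $G_\pi - x$ is connected: the subgraph on $R \cup \{n\}$ is still connected through $n$, and any $u \in L \setminus \{x\}$ reaches $R \cup \{n\}$ in $G_\pi - x$ by lifting a $u$-to-$r$ path in the connected graph $G' - x$, using that $L$-$L$ edges lift directly and that the final edge into $r$ lifts to an edge from an $L$-vertex to some vertex of $R$ (it cannot go to $n$, by the key observation).

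For part (b), this contraction strategy fails because $n$ is isolated in the induced subgraph on $L \cup \{n\}$. Instead, I would pick the specific entry $v^* \in R$ of maximum value, write $w$ for its value and $p$ for its position, and argue directly that $\pi - v^*$ is sum indecomposable. Suppose for contradiction that $\pi - v^*$ has a sum decomposition at some $k \in [1, n-2]$; translating back to $\pi$, this forces a specific set of values to occupy either positions $[1, k]$ (if $k < p$) or positions $\{1, \ldots, k+1\} \setminus \{p\}$ (if $k \ge p$), with the exact set depending on whether $w > k$ or $w \le k$. In three of the four cases one either recovers a sum decomposition of $\pi$ itself (at $k$ or at $k+1$) or forces the value $n$ into a position whose available values are all strictly below $n$. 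The remaining case, $k < p$ and $w \le k$, is where the choice of $v^*$ is essential: positions $[1, k]$ of $\pi$ use precisely the values $[1, k+1] \setminus \{w\}$, so the only value at most $k$ placed outside $[1, k]$ is $w$ itself (at position $p$); since every $R$-value is at most $w$ by the maximality of $v^*$, this forces $R \subseteq \{v^*\}$, contradicting $|R| \ge 2$. The main technical hurdle is the bookkeeping, namely tracking both the index shift in positions (positions of $\pi - v^*$ match those of $\pi$ below $p$ and shift by one above $p$) and the value shift from standardization (values above $w$ decrement by one); once this is in hand, each case collapses quickly.
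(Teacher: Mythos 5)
Your proof is correct, but it takes a genuinely different route from the paper for both parts. The paper writes $\pi = \lambda\, n\, \rho$, decomposes $\lambda$ into its sum components $\lambda_1, \ldots, \lambda_m$, and sets $b = \min(\rho)$; sum indecomposability of $\pi$ forces some entry of $\lambda_1$ to lie above $b$. For (a), the paper either removes the lone entry of $\lambda_1$ (if $|\lambda_1|=1$) or applies Proposition~\ref{prop-not-cut} to $\lambda_1$ itself and removes the lower of the two resulting non-cut entries; in either case an entry of $\lambda_1$ above $b$ survives, so no prefix of $\lambda$ can become a sum component. For (b), the paper removes \emph{any} entry of $\rho$ other than $b$, and the same invariant (an entry of $\lambda_1$ above $b$) immediately gives sum indecomposability. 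Your approach instead works in the inversion graph: for (a) you contract $R\cup\{n\}$, apply Proposition~\ref{prop-not-cut} to the quotient graph, and lift paths using the key observation that $n$ has no neighbor in $L$; for (b) you commit to the specific entry of maximum value in $R$ and verify sum indecomposability by explicit index/value bookkeeping across four cases. The comparison: the paper's unified $(\lambda_1, b)$ invariant is leaner and shows a stronger version of (b) (any non-minimum entry of $\rho$ works, not just the maximum); your contraction argument for (a) is a nice graph-theoretic alternative that sidesteps the sum-component analysis entirely; your (b) is more computational but entirely elementary and self-contained. Both routes are valid.
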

\begin{proof}
Both parts of the proposition follow from the same analysis. Write $\pi$ as the concatenation $\lambda n\rho$. While $\lambda$ is a word over distinct integers instead of a permutation, we can extend the notion of sum components to $\lambda$ in a natural way by defining a sum component of $\lambda$ to be the entries corresponding to a sum component in the permutation that is order isomorphic to $\lambda$. Let $\lambda_1$, $\lambda_2$, $\dots$, $\lambda_m$ be the sum components of $\lambda$.

In both cases, $\rho$ must be nonempty (in case (a) this follows from sum indecomposability and in case (b) it follows by our hypotheses). Let $b$ denote the bottommost entry of $\rho$. Thus we have the picture shown in Figure~\ref{fig-prop-delete}, though note that $\lambda$ may be empty in case (b).

\begin{figure}
\begin{center}
	\begin{tikzpicture}[scale=0.125]
		\plotpermbox{0}{0}{3}{3};
		\plotpermbox{4}{4}{7}{7};
		\node at (1.5,1.5) {$\lambda_1$};
		\node at (5.5,5.5) {$\lambda_2$};
		\node[rotate=45] at (9.6,9.6) {$\dots$};
		\absdot{(12,12)}{};
		\plotpermbox{14}{2}{18}{11};
		\node at (16,6.5) {$\rho$};
		\absdot{(15,1.5)}{};
		\node at (15,1.5) [below right] {$b$};
		\absdot{(2,3.5)}{};
	\end{tikzpicture}
\end{center}	
\caption{The situation in the proof of Proposition~\ref{prop-delete}.}
\label{fig-prop-delete}
\end{figure}
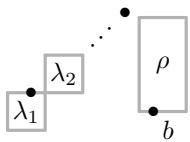

For case (a), because $\pi$ is sum indecomposable, at least one entry of $\lambda_1$ must lie above $b$. If $|\lambda_1|=1$ then every entry of $\lambda$ lies above $b$, and thus we can take $x$ to be the single entry of $\lambda_1$. Otherwise, because every nontrivial connected graph has at least two non-cut vertices (Proposition~\ref{prop-not-cut}), there are at least two entries of $\lambda_1$ whose removal leaves $\lambda_1$ sum indecomposable; let $x$ denote the bottommost of these entries. Removing $x$ leaves at least one entry of $\lambda_1$ above $b$, so $\pi-x$ is sum indecomposable.

Case (b) is trivial if $\lambda$ is empty because then we may remove any entry of $\rho$. Otherwise let $x$ denote any entry of $\rho$ except $b$. In $\pi-x$ there is still an entry of $\lambda_1$ above $b$, so $\pi-x$ is sum indecomposable.
\end{proof}

Given a set $X$ of permutations (always a set of sum indecomposable children in what follows) we define its set of \emph{max locations} by
\[
	\ml(X)
	=
	\{i\st\text{some $\pi\in X$ has its maximum entry at index $i$}\}.
\]
Suppose now that $\pi$ is sum indecomposable, $\pi-n$ is sum decomposable, and $\pi(i)=n$. It follows that $\pi$ has the form shown in Figure~\ref{fig-pi-minus-n-not-SI}, where $\alpha_1$ and $\alpha_2$ are nonempty, $\alpha_1$ is sum indecomposable, and there is an entry in $\alpha_1$ lying to the right of $n$. For all such permutations we have
\[
	\ml(K(\pi)) \subseteq \{i-1, i\}.
\]
Note that this property is recognizable from $K(\pi)$. We now prove that we can also recognize from $K(\pi)$ whether $\pi-n$ is sum decomposable. 

\begin{figure}
\begin{center}
	\begin{tikzpicture}[scale=0.125]
		\draw[dashed] (2.125,11)--(2.125,4.5);
		\plotpermbox{0}{0}{4.25}{4.25};
		\plotpermbox{5.25}{5.25}{9.5}{9.5};
		\plotpartialperm{2.125/11};
		\node at (2,2) {$\alpha_1$};
		\node at (7.25,7.25) {$\alpha_2$};
	\end{tikzpicture}
\end{center}	
\caption{The form of a sum indecomposable permutation $\pi$ with the property that $\pi-n$ is sum decomposable.}
\label{fig-pi-minus-n-not-SI}
\end{figure}
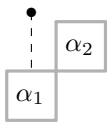

\begin{proposition}
\label{prop-recognize-not-SI}
Let $\pi$ be a sum indecomposable permutation of length at least $5$ that is not an increasing oscillation. It can be determined from $K(\pi)$ whether $\pi-n$ is sum decomposable.
\end{proposition}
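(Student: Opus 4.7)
My plan is to distinguish the two cases using the set of max locations $\ml(K(\pi))$, and then to refine this test when the max-location data alone is inconclusive.

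First, I would verify the forward direction. Suppose $\pi - n$ is sum decomposable, so $\pi$ has the shape of Figure~\ref{fig-pi-minus-n-not-SI} with $\pi(i) = n$ for some $i$ in the position range of $\alpha_1$. Every $\pi - x \in K(\pi)$ is obtained by removing some $x \neq n$, so its maximum (the relabeled $n$) sits at position $i - 1$ (when $x$ is left of $n$) or $i$ (when $x$ is right of $n$). Applying Proposition~\ref{prop-delete}, both sides of $n$ in $\pi$ are populated enough to contribute non-cut vertices: Proposition~\ref{prop-delete}(a) supplies a non-cut vertex left of $n$ when $i \geq 2$, and Proposition~\ref{prop-delete}(b) applies because the right of $n$ contains the right portion of $\alpha_1$ together with $\alpha_2$, so at least two entries. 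Consequently, in the sum decomposable case $\ml(K(\pi)) \in \{\{1\}, \{i - 1, i\}\}$, i.e., either a single position or two consecutive positions.

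Second, suppose $\pi - n$ is sum indecomposable, so $\pi - n \in K(\pi)$, and let $\pi(j) = n - 1$. The max of $\pi - n$ lies at position $j$ if $j < i$ and at $j - 1$ if $j > i$. If $n - 1$ is not adjacent to $n$ in $\pi$ (that is, $|j - i| \geq 2$), this max location lies strictly outside $\{i - 1, i\}$, so $\ml(K(\pi))$ either has at least three elements or contains a non-consecutive pair, certifying $\pi - n$ as sum indecomposable.

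The remaining case is when $\pi - n$ is sum indecomposable but $n - 1$ is adjacent to $n$ in $\pi$. Here $n$ and $n - 1$ form a monotone interval of length two, so by Proposition~\ref{prop-child-mono}, $\pi - n = \pi - (n - 1)$, and the corresponding child's max lies in $\{i - 1, i\}$, mimicking Case~A on max-location data. To resolve this ambiguity, my plan is: (i) when $|K(\pi)| \leq 2$, invoke Proposition~\ref{prop-K2-recon} to reconstruct $\pi$ from $K(\pi)$ (legitimate since $\pi$ is not an increasing oscillation) and read the answer off directly; (ii) when $|K(\pi)| \geq 3$, compare second-order features across the children --- in particular, whether the placement of the rank-two entry is positionally consistent with every max having originated from the original $n$ (as in Case~A) or, to the contrary, with one child having its max arising from the original $n - 1$ (as in Case~B with adjacency). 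Concretely, this amounts to assembling the grandchildren $\pi - x - n$ visible through $K(\pi)$ and exploiting Smith's Theorem (Theorem~\ref{thm-reconstruction}) to test whether the putative $\pi - n$ is sum decomposable.

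The main obstacle is executing step (ii) cleanly. The difficulty is that Case~A and Case~B with $n - 1$ adjacent to $n$ can yield identical $\ml(K(\pi))$, so the distinguishing criterion must look past max locations; and when reconstructing $\pi - n$ through its grandchildren visible in $K(\pi)$, some grandchildren may be missing because they arise from cut vertices of $G_\pi$, forcing the Smith-style reconstruction to be done with care.
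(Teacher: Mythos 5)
Your forward direction---that when $\pi-n$ is sum decomposable one has $\ml(K(\pi))\subseteq\{i-1,i\}$ for some $i$---matches the paper's analysis. But the converse step, where you claim that $|j-i|\ge 2$ automatically produces a non-consecutive pair (or $\ge 3$ elements) in $\ml(K(\pi))$, is wrong precisely when $|j-i|=2$. Say $j=i-2$. Then the max of $\pi-n$ sits at position $i-2$, and the other children of $\pi$ have max at $i-1$ or $i$; but if the only entry of $\pi$ to the \emph{right} of $n$ is the single entry forced by sum indecomposability, you may well find that \emph{no} sum indecomposable child has its max at position $i$, leaving $\ml(K(\pi))=\{i-2,i-1\}$, a consecutive pair. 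Symmetrically, if $j=i+2$ and $\pi$ begins with $n$, you can get $\ml(K(\pi))=\{i,i+1\}$. In both of these situations the $\ml$ test is inconclusive even though $\pi-n$ is sum indecomposable. The paper treats $|k-j|=2$ as the hardest case and disposes of it only by invoking \emph{additional} recognizable invariants of $K(\pi)$ beyond max locations---in particular, that when $\pi-n$ is sum decomposable, $K(\pi)$ has at most one child whose greatest entry is at the second-to-last index, and at most one child that begins with its maximum. You never introduce these criteria, so your proof has a real hole at $|j-i|=2$, not merely at $|j-i|=1$.

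Your treatment of the adjacent case $|j-i|=1$ is also under-specified. Step (i) ($|K(\pi)|\le 2$, reconstruct via Proposition~\ref{prop-K2-recon}) is fine, but step (ii) hand-waves toward ``second-order features'' and Smith reconstruction of $\pi-n$ from grandchildren visible through $K(\pi)$. You yourself flag the difficulty that some grandchildren $\pi-x-n$ may be invisible (because $x$ is a cut vertex), and you do not resolve it. The paper's actual argument here is much sharper and doesn't need grandchildren at all: when $n$ and $n-1$ form a monotone interval and $\pi\notin K^{(2)}$, there are at least two children in $K(\pi)$ whose top two entries form a monotone interval, which is forbidden when $\pi-n$ is sum decomposable. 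This is a one-line contradiction drawn from an explicit property of Figure~\ref{fig-pi-minus-n-not-SI}, and it is the kind of concrete, checkable invariant your proposal is missing.
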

\begin{proof}
We make use of the following properties of $K(\pi)$ when $\pi$ is sum indecomposable but $\pi-n$ is sum decomposable, all of which follow readily from the form of such permutations depicted in Figure~\ref{fig-pi-minus-n-not-SI}:
\begin{itemize}
	\item $\ml(K(\pi))\subseteq\{i-1,i\}$ for some $i$;
	\item $K(\pi)$ contains at most one permutation in which the greatest and second greatest entries form a monotone interval;
	\item $K(\pi)$ contains at most one permutation that has its greatest entry in its second-to-last index; and
	\item if $\ml(K(\pi))=\{1,2\}$ then $\pi(2)=n$ and thus $K(\pi)$ would contain at most one child beginning with its maximum entry.
\end{itemize}

Now take $\pi$ to be a sum indecomposable permutation of length $n\ge 5$ for which $\pi-n$ is sum indecomposable. We seek to show that $K(\pi)$ violates one of the conditions above. Thus we may begin by assuming that $\ml(K(\pi))\not\subseteq\{i-1,i\}$ for some $i$. We are done if $\pi\in K^{(2)}$ by Proposition~\ref{prop-K2-recon}, so we may assume that $\pi$ has at least three sum indecomposable children. Throughout the proof we let $k$ denote the position of $n$ in $\pi$ and $j$ the position of $n-1$. We distinguish three cases, depending on whether $|k-j|$ is $1$, $2$, or at least $3$.

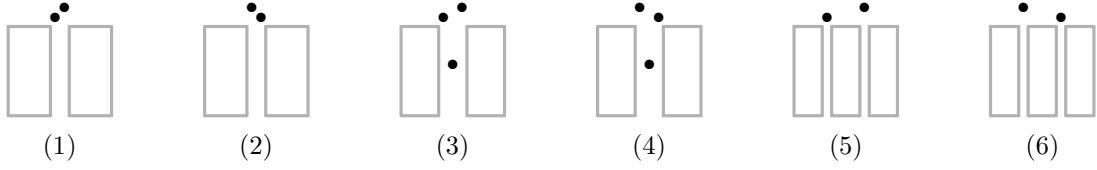
\begin{figure}
\begin{center}
\begin{tabular}{ccccccccccc}
\begin{tikzpicture}[scale=0.125]
		\plotpermbox{0}{0}{3.5}{8.5};
		\plotpermbox{6.5}{0}{10}{8.5};
		\plotpartialperm{4.5/10,5.5/11};
	\end{tikzpicture}
& \quad\quad &
	\begin{tikzpicture}[scale=0.125]
		\plotpermbox{0}{0}{3.5}{8.5};
		\plotpermbox{6.5}{0}{10}{8.5};
		\plotpartialperm{4.5/11,5.5/10};
	\end{tikzpicture}
& \quad\quad &
	\begin{tikzpicture}[scale=0.125]
		\plotpermbox{0}{0}{3}{8.5};
		\plotpermbox{7}{0}{10}{8.5};
		\plotpartialperm{4/10,5/5,6/11};
	\end{tikzpicture}
& \quad\quad &
	\begin{tikzpicture}[scale=0.125]
		\plotpermbox{0}{0}{3}{8.5};
		\plotpermbox{7}{0}{10}{8.5};
		\plotpartialperm{4/11,5/5,6/10};
	\end{tikzpicture}
& \quad\quad &
	\begin{tikzpicture}[scale=0.125]
		\plotpermbox{0}{0}{2}{8.5};
		\plotpermbox{4}{0}{6}{8.5};
		\plotpermbox{8}{0}{10}{8.5};
		\plotpartialperm{3/10,7/11};
	\end{tikzpicture}
& \quad\quad &
	\begin{tikzpicture}[scale=0.125]
		\plotpermbox{0}{0}{2}{8.5};
		\plotpermbox{4}{0}{6}{8.5};
		\plotpermbox{8}{0}{10}{8.5};
		\plotpartialperm{3/11,7/10};
	\end{tikzpicture}
\\[2pt]
	(1)&&(2)&&(3)&&(4)&&(5)&&(6)
\end{tabular}
\end{center}
\caption{The six cases in the proof of Proposition~\ref{prop-recognize-not-SI}.}
\label{fig-ml-cases}
\end{figure}

We first consider the cases where $|k-j|=1$, i.e., where $n$ and $n-1$ form a monotone interval, cases (1) and (2) of Figure~\ref{fig-ml-cases}. Because $\pi$ has at least three sum indecomposable children, it has at least two sum indecomposable children whose greatest and second greatest entries form a monotone interval. However, we have already remarked that this could not occur if $\pi-n$ were sum decomposable.

Next we handle the most difficult of the cases, where $|k-j|=2$, cases (3) and (4) of Figure~\ref{fig-ml-cases}. In case (3), the sum indecomposability of $\pi-n$ implies that $j \in \ml(K(\pi))$. Furthermore, if the rightmost box has at least two entries, then one may be deleted to give a sum decomposable child of $\pi$ that has its maximum entry at index $k$ (by Proposition~\ref{prop-delete} (b)). Since the rightmost box cannot be empty ($\pi$ is sum indecomposable), it follows that this case is complete unless the rightmost box contains exactly one entry. In that case, there are at least two sum indecomposable children that have their maximum entries in the second-to-last index, which could not occur if $\pi-n$ were sum decomposable.

In case (4), because we are assuming that $\pi-n$ is sum indecomposable, we have $j-1=k+1\in\ml(K(\pi))$. If there are any entries to left of $n$ in $\pi$ then Proposition~\ref{prop-delete} (a) shows that one of those entries can be deleted to obtain a sum indecomposable child of $\pi$, implying that $k-1\in\ml(K(\pi))$, and thus we are done with that case. Otherwise $\pi$ begins with its maximum, and it follows that $\ml(K(\pi))=\{1,2\}$. Because $\pi$ has at least three sum indecomposable children, there are at least two permutations in $K(\pi)$ that begin with their maximum entries, which could not occur if $\pi-n$ were sum indecomposable.

This leaves us with the cases where $|k-j|\ge 3$, cases (5) and (6) of Figure~\ref{fig-ml-cases}. In this case, the sum indecomposability of $\pi-n$ shows that either $j$ or $j-1$ lies in $\ml(K(\pi))$. Because $\pi$ has another sum indecomposable child, we see that either $k$ or $k-1$ lies in $\ml(K(\pi))$. Therefore $\ml(K(\pi))$ is not of the form $\{i-1,i\}$, completing the proof.
\end{proof}

Next we reconstruct the permutations recognized by the previous result.

\begin{proposition}
\label{prop-reconstruct-not-SI}
Let $\pi$ be a sum indecomposable permutation of length at least $5$ that is not an increasing oscillation. If $\pi-n$ is sum decomposable then $K(\pi)$ uniquely determines $\pi$.
\end{proposition}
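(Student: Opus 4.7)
The plan is to recover the data $(n, i, a, \alpha_1, \alpha_2)$ uniquely specifying $\pi$ from $K(\pi)$ in sequence. Writing $\pi = L \cdot n \cdot R \cdot S$ with $L = \pi(1)\cdots\pi(i-1)$, $R = \pi(i+1)\cdots\pi(a+1)$, and $S = \pi(a+2)\cdots\pi(n)$ (so that $LR = \alpha_1$ is the first sum component of $\pi - n$ and $S = \alpha_2$ is the remainder), the length $n$ is one more than that of any child. By the analysis preceding Proposition~\ref{prop-recognize-not-SI} we have $\ml(K(\pi)) \subseteq \{i-1, i\}$, and since removing any entry of $S$ yields a sum indecomposable child whose maximum is at position $i$, we conclude $i = \max \ml(K(\pi))$.

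For the children with maximum at position $i$ --- those obtained by removing an entry of $R$ or of $S$ --- the permutation $\tau - \max$ is sum decomposable, equal to $(\alpha_1 - x) \oplus \alpha_2$ or to $\alpha_1 \oplus (\alpha_2 - y)$ respectively. Because $\alpha_1 - x$ must itself be sum indecomposable for an $R$-removal child to lie in $K(\pi)$, the first sum component of $\tau - \max$ has length exactly $a - 1$ for $R$-removal children and exactly $a$ for $S$-removal children; since $S$-removal children always exist, $a$ is the maximum such first-component length, and $\alpha_1$ is the first sum component of $\tau - \max$ for any $\tau$ attaining it. To recover $\alpha_2$, I use that in any $L$-removal or $R$-removal child the final $b = n - a - 1$ positions form $\alpha_2$ (with values uniformly shifted, preserving the pattern). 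Proposition~\ref{prop-not-cut} supplies $\alpha_1$ with at least two non-cut entries when $a \geq 2$, and since at most one such entry can lie in $R$ when $|R| = 1$, at least one non-cut entry lies either in $L$ (yielding an $L$-removal child in $K(\pi)$) or in $R$ with $|R| \geq 2$ (yielding an $R$-removal child), so $\alpha_2$ is read off.

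The hard part will be the degenerate case $a = 1$, which forces $i = 1$ and $\pi = 1 \ominus (1 \oplus \alpha_2)$. Here the only sum indecomposable child besides the $S$-removal children is the ``special'' child $\tau^* = 1 \ominus \alpha_2$ obtained by removing the sole $\alpha_1$ entry. Now $\tau^* - \max = \alpha_2$, while every $S$-removal child has $\tau - \max = 1 \oplus (\alpha_2 - y)$ whose first sum component has length $1$; thus $\tau^*$ is distinguishable (and $\alpha_2$ directly recovered) whenever $\alpha_2(1) \neq 1$. In the residual subcase $\alpha_2(1) = 1$ the special child coincides with an $S$-removal child, and I would verify that either $|K(\pi)| \leq 2$ (so Proposition~\ref{prop-K2-recon} applies) or the full set of children of $\alpha_2$ can be read from the $S$-removal children in $K(\pi)$, at which point Theorem~\ref{thm-reconstruction} reconstructs $\alpha_2$ when $b \geq 5$, with the finitely many remaining small cases dispatched by direct enumeration.
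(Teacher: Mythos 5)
Your overall plan --- recover $i$ from $\ml(K(\pi))$, recover $\alpha_1$ as the longest first sum component of $\tau-\max$ over $\tau\in K(\pi)$, then recover $\alpha_2$ from a child obtained by deleting an entry of $\alpha_1$ --- is exactly the plan the paper follows, and your treatment of the case $a\ge 2$ is if anything more careful than the paper's (which leans on Proposition~\ref{prop-delete}(a) for the existence of a child containing $\alpha_2$ intact even though that proposition only addresses entries to the \emph{left} of $n$, so it does not directly cover $i=1$).

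However, the step ``$a$ is the maximum first-component length'' fails when $a=1$. In that case the sole $R$-removal child $\tau^*$ is obtained by deleting the only entry of $\alpha_1$, and $\tau^*-\max=\alpha_2$, whose first sum component can be arbitrarily long; your assertion that $\tau-\max=(\alpha_1-x)\oplus\alpha_2$ is sum decomposable with first component of length exactly $a-1$ is false here. Concretely, take $\pi=51324$: it is sum indecomposable of length $5$, not an increasing oscillation, and $\pi-5=1324=1\oplus 213$ is sum decomposable, so $a=1$, $\alpha_1=1$, $\alpha_2=213$. We have $K(\pi)=\{4213,4123,4132\}$ and $\ml(K(\pi))=\{1\}$; the first sum components of $\tau-\max$ are $21$, $1$, $1$, so the maximum length is $2\ne a$, and your procedure would output $\alpha_1=21$. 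You rightly flag $a=1$ as the hard case and propose Smith's Theorem, but you never explain how to \emph{decide} from $K(\pi)$ whether $a=1$ --- the only tool you offer for determining $a$ is precisely the method that breaks --- so the branching that precedes your $a=1$ analysis is circular. (The paper's own terse proof shares this lacuna. A clean repair: once $i=\max\ml(K(\pi))=1$ is detected, $\pi=1\ominus\sigma$ with $\sigma=\pi-n$ and $K(\pi)=\{1\ominus(\sigma-y): y \text{ an entry of } \sigma\}$ is in bijection with the set of children of $\sigma$, so Theorem~\ref{thm-reconstruction} reconstructs $\sigma$, and hence $\pi$, directly whenever $n\ge 6$, with $n=5$ checked by hand; and $i\ge 2$ forces $a\ge 2$, where your max-first-component argument is sound.)
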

\begin{proof}
By our previous proposition, we can recognize from $K(\pi)$ whether $\pi-n$ is sum decomposable. Assuming that it is sum decomposable, it has the form shown in Figure~\ref{fig-pi-minus-n-not-SI}, where $\alpha_1$ is nonempty and sum indecomposable and $\alpha_2$ is also nonempty.

As $\pi-x$ is sum indecomposable for all entries $x$ in $\alpha_2$ in this case, we can determine the subpermutation contained in $\alpha_1$ by looking for the longest initial sum indecomposable subpermutation (ignoring the maximum entry) in any member of $K(\pi)$. From this we also identify the location of the entry $n$ in $\pi$. We can then determine the contiguous set of indices and values that make up the entries in $\alpha_2$. As we know there is some entry $x$ in $\alpha_1$ such that $\pi-x$ is sum indecomposable (Proposition~\ref{prop-delete} (a)), there is a permutation in $K(\pi)$ that contains $\alpha_2$ intact. After identifying this child, we have all the information necessary to reconstruct $\pi$ from $K(\pi)$.
\end{proof}

Because the inverse of a sum indecomposable permutation is also sum indecomposable, Proposition~\ref{prop-recognize-not-SI} shows that we can also recognize from $K(\pi)$ whether $\pi-\pi(n)$ is sum indecomposable, and Proposition~\ref{prop-reconstruct-not-SI} shows that we can reconstruct $\pi$ from $K(\pi)$ whenever $\pi-\pi(n)$ is sum decomposable. It remains only to reconstruct those permutations $\pi$ for which both $\pi-n$ and $\pi-\pi(n)$ are sum indecomposable, which we do in the final result of this section to complete the proof of Theorem~\ref{thm-reconstruction-K}. The proof we give is an adaptation of that given in the general reconstruction context by Raykova~\cite{raykova:permutation-rec:}.

\begin{proposition}
\label{prop-no-problems}
If $\pi$ is a sum indecomposable permutation of length at least $5$ and $\pi-n$ and $\pi-\pi(n)$ are both sum indecomposable then $K(\pi)$ uniquely determines $\pi$.
\end{proposition}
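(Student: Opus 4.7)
The plan is to adapt Raykova's proof of Theorem~\ref{thm-reconstruction} to work with the restricted deck $K(\pi)$. Suppose $\pi$ and $\pi'$ are two sum indecomposable permutations of length $n\ge 5$ that both satisfy the hypotheses, and assume $K(\pi)=K(\pi')$; the goal is to show $\pi=\pi'$. By Propositions~\ref{prop-K2-recon} and~\ref{prop-reconstruct-not-SI}, we may further restrict to the case in which $|K(\pi)|\ge 3$ and neither $\pi-n$ nor $\pi-\pi(n)$ is sum decomposable, so in particular both are themselves members of $K(\pi)$.

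The core idea is to single out two distinguished children in $K(\pi)$: the ``top card'' $\pi-n$, obtained by deleting the maximum value, and the ``right card'' $\pi-\pi(n)$, obtained by deleting the rightmost entry. Once these are identified, $\pi$ can be reconstructed by overlaying them: $\pi-\pi(n)$ determines the first $n-1$ entries of $\pi$ up to the global relabeling controlled by the value of $\pi(n)$, while $\pi-n$ pins down the position of $n$; comparing the two yields $\pi$ uniquely.

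To identify the top card, I would analyze $\ml(K(\pi))$ in the spirit of Proposition~\ref{prop-recognize-not-SI}. Writing $k=\pi^{-1}(n)$, each child $\pi-x$ with $x\ne n$ has its maximum entry at position $k-1$ or $k$, according to whether $x$ lies to the left or right of $n$, and Proposition~\ref{prop-delete} guarantees that both possibilities actually occur whenever $\pi$ has appropriate entries on each side of $n$. In contrast, the top card $\pi-n$ contributes to $\ml(K(\pi))$ the (appropriately shifted) position of $n-1$ in $\pi$, which in generic cases is distinct from $\{k-1,k\}$ and so singles out $\pi-n$ unambiguously. A symmetric analysis applied to $\pi^{-1}$ identifies the right card.

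The main obstacle will be the case analysis needed to make this identification rigorous in all configurations. In particular, one must handle the cases in which the shifted position of $n-1$ happens to coincide with $k-1$ or $k$, so that $\pi-n$ cannot be detected from $\ml(K(\pi))$ alone; in such situations, finer structure of the children --- for instance, which children lose specific large entries, or which are themselves of exceptional form --- must be invoked. This is precisely where Raykova's original argument must be adapted to rely only on the sum indecomposable portion of the deck, and I expect it to be the most technically delicate part of the proof.
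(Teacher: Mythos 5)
Your overall plan differs substantively from the paper's, and the gap you acknowledge is the crux of the matter. You propose to \emph{identify} the cards $\pi-n$ and $\pi-\pi(n)$ inside the deck $K(\pi)$ (via the max-location analysis) and then reconstruct $\pi$ by overlaying them. The paper never tries to single out those two cards. Instead, Raykova's argument is adapted as an \emph{induction on $n$}: one supposes $\tau$ is another sum indecomposable permutation with $K(\tau)=K(\pi)$, uses Proposition~\ref{prop-recognize-not-SI} (applied to $K(\pi)$ and to the set of inverses) to conclude that $\tau-n$ and $\tau-\tau(n)$ are also sum indecomposable, and observes that if $\pi-n=\tau-n$ and $\pi-\pi(n)=\tau-\tau(n)$ then $\pi=\tau$ (since a sum indecomposable permutation of length $\ge 2$ cannot end in its maximum). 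If not, say $\pi-n\neq\tau-n$, then by induction $K(\pi-n)\neq K(\tau-n)$, so some $\sigma=\pi-n-x$ lies in $K(\pi-n)\setminus K(\tau-n)$; one then shows $\pi-x\in K(\pi)\setminus K(\tau)$, contradicting $K(\pi)=K(\tau)$. No card-identification is needed anywhere.

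The difficulty you flag --- what to do when the shifted position of $n-1$ coincides with $k-1$ or $k$ --- is exactly the case analysis the paper's route avoids, and it is not a small residual: those are the cases $|k-j|\le 2$ that already required most of the work in the proof of Proposition~\ref{prop-recognize-not-SI} for a weaker conclusion. To push your ``identify-then-overlay'' strategy through, you would also have to account for the possibility that the same card of $K(\pi)$ arises from deleting several different entries (when $n$ sits in a nontrivial monotone interval), so ``which card is $\pi-n$'' is not even well-posed as a question about \emph{entries} rather than about \emph{cards}. As written, the proposal is a plausible program but not a proof: the deferred ``finer structure'' step is where essentially all the substance would have to go, and it is precisely the step that the paper's inductive argument makes unnecessary. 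A smaller point: Proposition~\ref{prop-reconstruct-not-SI} is not what lets you reduce to $|K(\pi)|\ge 3$; the hypotheses of the current proposition already guarantee $\pi-n$ and $\pi-\pi(n)$ are sum indecomposable, so Proposition~\ref{prop-reconstruct-not-SI} is about a disjoint case. What one does use from that part of Section~\ref{sec-reconstruction} is Proposition~\ref{prop-recognize-not-SI}, to transfer the hypotheses on $\pi$ to any competitor $\tau$ with the same deck.
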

\begin{proof}
We prove the claim by induction on $n$, the base case of $n=5$ being readily verified by computer.

Let $\pi$ be as in the statement of the proposition and suppose that $\tau$ is a sum indecomposable permutation such that $K(\tau)=K(\pi)$. We show that this implies $\pi=\tau$. Proposition~\ref{prop-recognize-not-SI} shows that $\tau-n$ must be sum indecomposable. Additionally, by applying Proposition~\ref{prop-recognize-not-SI} to the set of inverses of $K(\tau)=K(\pi)$ we see that $\tau-\tau(n)$ must also be sum indecomposable. If both $\pi-n=\tau-n$ and $\pi-\pi(n)=\tau-\tau(n)$ then it follows that either $\pi=\tau$ or $\pi$ or $\tau$ ends in $n$. As $\pi$ and $\tau$ are assumed to be sum indecomposable, neither can end in $n$. Therefore in this case, $\pi = \tau$.

Thus we can assume that either $\pi-n \neq \tau-n$ or $\pi-\pi(n) \neq \tau - \tau(n)$. By symmetry, we may assume the former, and therefore by induction, we have that $K(\pi-n)\neq K(\tau-n)$. Therefore there is some permutation in one set but not the other; suppose without loss of generality that $\sigma\in K(\pi-n)\setminus K(\tau-n)$, so there is an entry $x$ of $\pi$ such that $\pi-n-x=\sigma$. We claim that $\pi-x\in K(\pi)\setminus K(\tau)$, a contradiction which will complete the proof.

To prove this claim we must first show that $\pi-x\in K(\pi)$. Suppose otherwise, so $\pi$, $\pi-n$, and $\pi-n-x$ are all sum indecomposable, but $\pi-x$ is sum decomposable. The only way this can happen is if $n$ is the second to last entry of $\pi$, which violates the assumption that $\pi-\pi(n)$ is sum indecomposable. Hence, $\pi-x \in K(\pi)$.

Lastly we show that $\pi-x\notin K(\tau)$. Suppose to the contrary that $\pi-x \in K(\tau)$. Then there exists an entry $y$ in $\tau$ such that $\pi - x = \tau - y$. If $y=n$, then $\pi - n -x = \tau - n - (n-1) \in K(\tau-n)$, a contradiction. If $y\neq n$, then $\pi-n-x = \tau-n-y \in K(\tau-n)$, again a contradiction. Therefore, since $\pi-x \in K(\pi)\setminus K(\tau)$, we have derived a contradiction. It must be the case that $\pi = \tau$.
\end{proof}

\section{Hereditary Sets of Sum Indecomposable Permutations}
\label{sec-taper}

The purpose of this section is to establish results of the form
\begin{quote}
If the class $\C$ contains $m$ sum indecomposable permutations of length $n$ then $\C$ contains at least $m$ sum indecomposable permutations of length $n-1$.
\end{quote}
One such result is obvious---because every connected graph contains a connected induced subgraph on one fewer vertex, if a permutation class contains a sum indecomposable permutation of length $n\ge 2$ then it also contains a sum indecomposable permutation of length $n-1$.

\begin{figure}
\begin{center}
	\begin{tikzpicture}[scale=0.2]
		\draw[lightgray, fill, rotate around={-45:(1.5,2.5)}] (1.5,2.5) ellipse (20pt and 40pt);
		\draw[lightgray, fill, rotate around={-45:(9.5,10.5)}] (9.5,10.5) ellipse (20pt and 40pt);
		\plotpermgraph{2,3,5,1,7,4,9,6,10,11,8};
		\plotpermbox{0.5}{0.5}{11.5}{11.5};
	\end{tikzpicture}
\quad
	\begin{tikzpicture}[scale=0.2]
		\draw[lightgray, fill, rotate around={-45:(1.5,2.5)}] (1.5,2.5) ellipse (20pt and 40pt);
		\plotpermgraph{2,3,5,1,7,4,9,6,11,8,10};
		\plotpermbox{0.5}{0.5}{11.5}{11.5};
	\end{tikzpicture}
\quad
	\begin{tikzpicture}[scale=0.2]
		\draw[lightgray, fill, rotate around={-45:(9.5,10.5)}] (9.5,10.5) ellipse (20pt and 40pt);
		\plotpermgraph{3,1,5,2,7,4,9,6,10,11,8};
		\plotpermbox{0.5}{0.5}{11.5}{11.5};
	\end{tikzpicture}
\quad
	\begin{tikzpicture}[scale=0.2]
		\plotpermgraph{3,1,5,2,7,4,9,6,11,8,10};
		\plotpermbox{0.5}{0.5}{11.5}{11.5};
	\end{tikzpicture}
\quad
	\begin{tikzpicture}[scale=0.2]
		\plotpermgraph{2, 4, 1, 6, 3, 8, 5, 10, 7, 11, 9};
		\plotpermbox{0.5}{0.5}{11.5}{11.5};
	\end{tikzpicture}
\\[12pt]
	\begin{tikzpicture}[scale=0.2]
		\draw[lightgray, fill, rotate around={-45:(1.5,2.5)}] (1.5,2.5) ellipse (20pt and 40pt);
		\plotpermgraph{2,3,5,1,7,4,9,6,10,8};
		\plotpermbox{0.5}{0.5}{10.5}{10.5};
	\end{tikzpicture}
\quad
	\begin{tikzpicture}[scale=0.2]
		\draw[lightgray, fill, rotate around={-45:(8.5,10.5)}] (8.5,10.5) ellipse (20pt and 40pt);
		\plotpermgraph{3,5,2,7,4,9,6,10,11,8};
		\plotpermbox{0.5}{1.5}{10.5}{11.5};
	\end{tikzpicture}
\quad
	\begin{tikzpicture}[scale=0.2]
		\plotpermgraph{3,1,5,2,7,4,9,6,10,8};
		\plotpermbox{0.5}{0.5}{10.5}{10.5};
	\end{tikzpicture}
\quad
	\begin{tikzpicture}[scale=0.2]
		\plotpermgraph{2, 4, 1, 6, 3, 8, 5, 10, 7, 9};
		\plotpermbox{0.5}{0.5}{10.5}{10.5};
	\end{tikzpicture}
\end{center}
\caption{The $5$ sum indecomposable permutations of length $11$ shown on the top row (the first of which is a member of $U^o$) together contain only $4$ sum indecomposable permutations of length $10$, shown on the bottom row (which is a repeat of Figure~\ref{fig-mu11-kids}).}
\label{fig-5-doesnt-imply-5}
\end{figure}
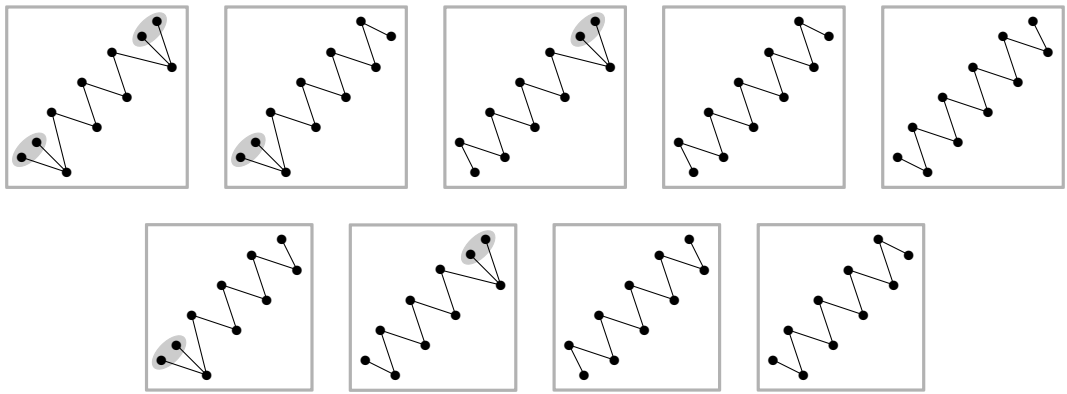

In general, however, we must be careful with our choices of $m$ and $n$. For a trivial example, there are $3$ sum indecomposable permutations of length $3$, but there is only $1$ of length $2$. Thus our results must impose restrictions on $n$. More significantly, no such result holds for $m=5$, with any restrictions on $n$, as witnessed by the set of permutations depicted in Figure~\ref{fig-5-doesnt-imply-5}. Note that the permutations from this example are contained in $\Sub(U^o)$. 

The main results of this section are collected below.
\begin{itemize}
\item {\bf Proposition~\ref{prop-2-implies-2}.} \emph{If the permutation class $\C$ contains $2$ sum indecomposable permutations of length $n \geq 4$ then it contains at least $2$ sum indecomposable permutations of length $n-1$.}
\item {\bf Proposition~\ref{prop-3-implies-3}.} \emph{If the permutation class $\C$ contains $3$ sum indecomposable permutations of length $n \geq 5$ then it contains at least $3$ sum indecomposable permutations of length $n-1$.}
\item {\bf Proposition~\ref{prop-4-implies-4}.} \emph{If the permutation class $\C$ contains $4$ sum indecomposable permutations of length $n \geq 6$ then it contains at least $4$ sum indecomposable permutations of length $n-1$.}
\end{itemize}
The first of these results was proved by Kaiser and Klazar~\cite{kaiser:on-growth-rates:}, though we present a shorter proof via reconstruction. The second of these results was claimed in \cite[Proposition A.16]{vatter:small-permutati:}, but the argument presented there is faulty and our proof corrects this error. Proposition~\ref{prop-4-implies-4} is original, and best possible in light of the example depicted in Figure~\ref{fig-5-doesnt-imply-5}.

\begin{proposition}
\label{prop-2-implies-2}
If the permutation class $\C$ contains $2$ sum indecomposable permutations of length $n \geq 4$ then it contains at least $2$ sum indecomposable permutations of length $n-1$.
\end{proposition}
\begin{proof}
Suppose that $\C$ contains two sum indecomposable permutations of length $n\ge 4$, say $\pi_1$ and $\pi_2$. As the result is easily checked for $n=4$, we may assume that $n\ge 5$. If either $\pi_1$ or $\pi_2$ has two sum indecomposable children then we are immediately done. Thus we may assume that they each have precisely one sum indecomposable child. Theorem~\ref{thm-reconstruction-K} (which guarantees reconstruction from sum indecomposable children) implies that these children must be different, completing the proof.
\end{proof}

As a consequence of our characterization of $K^{(1)}$ in Proposition~\ref{prop-K1}, we obtain the proposition below, which we use in what follows.

\begin{proposition}
\label{prop-no-2K1}
Every sum indecomposable permutation of length $n\ge 5$ that does not lie in $K^{(1)}$ has at least one sum indecomposable child that does not lie in $K^{(1)}$.
\end{proposition}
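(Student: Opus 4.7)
The plan is to argue by contradiction. Suppose $\pi$ is sum indecomposable of length $n\geq 5$ with $\pi\notin K^{(1)}$, but every sum indecomposable child of $\pi$ lies in $K^{(1)}$. By Proposition~\ref{prop-K1}, each such child has inversion graph equal to either $K_{n-1}$ (corresponding to the decreasing permutation) or $K_{1,n-2}$ (corresponding to either of the two skew sums in $K^{(1)}$). Writing $G = G_\pi$, the argument splits on whether any connected child of $G$ is $K_{n-1}$.

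In Case 1, suppose some connected child is $K_{n-1}$. Then $G$ contains an induced $K_{n-1}$ on a set $C$ together with an extra vertex $v$ adjacent to $k$ vertices of $C$, with $1\leq k\leq n-2$ (since $G$ is connected but not $K_n$). For any $w\in C$ with $G-w$ connected, the subgraph $G-w$ contains an induced $K_{n-2}$, which has triangles once $n\geq 5$ and therefore cannot equal $K_{1,n-2}$. Moreover, $G-w=K_{n-1}$ only when $v$ is adjacent to every vertex of $C-w$. If $k\leq n-3$, pick any non-neighbor $w$ of $v$ in $C$; then $v$ has degree $k<n-2$ in $G-w$, so $G-w\notin K^{(1)}$. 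If $k=n-2$, pick any neighbor $w$ of $v$ in $C$; then $v$ has degree $n-3<n-2$ in $G-w$, again outside $K^{(1)}$. In either case $G-w$ remains connected, producing a sum indecomposable child not in $K^{(1)}$, a contradiction.

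In Case 2, every connected child is $K_{1,n-2}$, and Proposition~\ref{prop-not-cut} supplies two such children $G-v_1$ and $G-v_2$ with star centers $c_1$ and $c_2$; in each star, $c_i$ is adjacent to every vertex of $V(G)\setminus\{v_i,c_i\}$ and the set $V(G)\setminus\{v_i,c_i\}$ is independent in $G$. I would case-split on the configuration of $\{v_1,v_2,c_1,c_2\}$. When $c_1=c_2=c$, combining the two stars forces $c$ to be universal and all other vertices pairwise non-adjacent except possibly the edge $v_1v_2$; if this edge is absent then $G=K_{1,n-1}$, contradicting $\pi\notin K^{(1)}$, while if it is present then deleting any leaf $\ell$ (available since $n\geq 5$) yields a connected graph whose degree sequence --- one vertex of degree $n-2$, two of degree $2$, and $n-4$ of degree $1$ --- is incompatible with either $K_{n-1}$ or $K_{1,n-2}$. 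When $c_1\neq c_2$, any vertex in $V(G)\setminus\{v_1,c_1,v_2,c_2\}$ would be simultaneously a neighbor of $c_2$ (as $c_2$ centers the second star) and a non-neighbor of $c_2$ (because the independent set $V(G)\setminus\{v_1,c_1\}$ contains both $c_2$ and such a vertex), which is immediately contradictory; this rules out every configuration for $n\geq 5$ except the symmetric one $c_1=v_2$, $c_2=v_1$, in which case $G$ is $K_{2,n-2}$ with or without the extra edge $v_1v_2$, and deleting any common leaf leaves either $K_{2,n-3}$ or $K_{2,n-3}$ plus an edge --- both of which have degree sequences incompatible with $K_{n-1}$ or $K_{1,n-2}$ once $n\geq 5$.

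The main obstacle is organizing the sub-cases of Case 2 cleanly; the argument requires juggling two independent sets together with two star centers, and one must handle separately the configurations in which the $c_i$ coincide with the $v_j$. The payoff is that each sub-case collapses to a simple degree-sequence or adjacency contradiction, leveraging the fact that for $n\geq 5$ the target graphs $K_{n-1}$ and $K_{1,n-2}$ differ so starkly that no intermediate structure can satisfy the combined star and clique constraints imposed by the hypothesis.
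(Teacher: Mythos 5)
Your proof is correct but takes a genuinely different route from the paper's. The paper works directly with the permutations: by Proposition~\ref{prop-not-cut} there are two sum indecomposable children $\sigma_1,\sigma_2$, and if both lay in $K^{(1)}$ then by Proposition~\ref{prop-K1} each would be one of $(n-1)\cdots 21$, $1\ominus(12\cdots(n-2))$, or $(12\cdots(n-2))\ominus 1$; a quick length count (an increasing subsequence of length $n-2$ and a decreasing one of length $n-1$ overlap in at most one entry, so $n\ge 2n-4$, forcing $n\le 4$) rules out the decreasing permutation, so the two children must be the two skew sums, whence $\pi=1\ominus(12\cdots(n-2))\ominus 1$, which visibly has the sum indecomposable child $1\ominus(12\cdots(n-3))\ominus 1\notin K^{(1)}$. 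That argument is about five lines. Your version translates $K^{(1)}$ into the inversion-graph dictionary ($K_{n-1}$ or $K_{1,n-2}$) and runs an exhaustive structural case analysis: either $G$ contains a $K_{n-1}$-child, in which case degree counting on a well-chosen vertex deletion kills both target graphs, or both non-cut deletions are stars, in which case the two star constraints pin $G$ down to $K_{1,n-1}$ (plus possibly an edge) or $K_{2,n-2}$ (plus possibly an edge), and each survivor has a connected child with the wrong degree sequence. This is longer but entirely mechanical and avoids permutation-specific reasoning. Both proofs exploit the same underlying rigidity --- for $n\ge 5$ the two $K^{(1)}$ shapes are far too constrained to both appear as children of a single connected $G$ --- the paper by knocking out the decreasing child immediately, yours by degree-sequence bookkeeping. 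One small presentational note: in Case 2 with $c_1\ne c_2$, the ``simultaneously a neighbor and non-neighbor'' clash requires $c_2\ne v_1$ (so that $c_2$ lands in the independent set $V(G)\setminus\{v_1,c_1\}$); the symmetric version requires $c_1\ne v_2$. You correctly conclude that only the double coincidence $c_1=v_2$, $c_2=v_1$ survives, but it is worth stating explicitly that two symmetric applications of the argument are being made rather than one.
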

\begin{proof}
Suppose that $\pi\notin K^{(1)}$ has length at least $5$ and let $\sigma_1$ and $\sigma_2$ denote two sum indecomposable children of $\pi$. If $\sigma_1,\sigma_2\in K^{(1)}$ then by Proposition~\ref{prop-K1} we must, up to relabeling, have $\sigma_1=1\ominus(12\cdots(n-2))$ and $\sigma_2=(12\cdots(n-2))\ominus 1$, because $\pi$ cannot contain one of these permutations together with $(n-1)\cdots 21$. This however implies that $\pi=1\ominus(12\cdots(n-2))\ominus 1$, which has $1\ominus(12\cdots(n-3))\ominus 1$ as a sum indecomposable child, and as $n\ge 5$, this permutation does not lie in $K^{(1)}$---in fact, it lies in $K^{(3)}\setminus K^{(2)}$.
\end{proof}

We now prove our second result.

\begin{proposition}
\label{prop-3-implies-3}
If the permutation class $\C$ contains $3$ sum indecomposable permutations of length $n \geq 5$ then it contains at least $3$ sum indecomposable permutations of length $n-1$. 
\end{proposition}
\begin{proof}
Suppose that $\C$ contains three sum indecomposable permutations of length $n\ge 5$, say $\pi_1$, $\pi_2$, and $\pi_3$, labeled so that $|K(\pi_1)|\ge |K(\pi_2)|\ge |K(\pi_3)|$. We may assume that $\pi_1,\pi_2,\pi_3\in K^{(2)}$ because otherwise we are done. If $\pi_1$ (and consequently also $\pi_2$ and $\pi_3$) has only a single sum indecomposable child, then reconstruction (Theorem~\ref{thm-reconstruction-K}) shows that these three children must be distinct, and we are done.

Suppose now that $\pi_1$ has two sum indecomposable children. If $\pi_2$ also has two sum indecomposable children then we are done by reconstruction unless both $\pi_1$ and $\pi_2$ are increasing oscillations. In this case, $\pi_3$ cannot be an increasing oscillation (there are only two of each length), and so by reconstruction it must have a sum indecomposable child that is not a child of $\pi_1$ or $\pi_2$.

If $\pi_1$ has two sum indecomposable children and $\pi_2$ and $\pi_3$ have one sum indecomposable child each, we see from Proposition~\ref{prop-no-2K1} that $\pi_1$ has a sum indecomposable child that does not lie in $K^{(1)}$ and therefore differs from the sum indecomposable children of $\pi_2$ and $\pi_3$, which must also differ from each other by reconstruction.
\end{proof}

From our characterizations of $K^{(1)}$ and $K^{(2)}$ in Propositions~\ref{prop-K1} and \ref{prop-K2}, we obtain the following analogue of Proposition~\ref{prop-no-2K1}.

\begin{proposition}
\label{prop-no-3K2}
Every sum indecomposable permutation of length $n\ge 7$ that does not lie in $K^{(2)}$ has at least one sum indecomposable child that does not lie in $K^{(2)}$.\footnote{The restriction on length is needed because the permutation $324651\in K^{(3)}\setminus K^{(2)}$ has three sum indecomposable children, $23541$, $32451$, and $32541$, all of which belong to $K^{(2)}$.}
\end{proposition}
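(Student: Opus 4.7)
The plan is to argue by contradiction: suppose $\pi$ is sum indecomposable with $|\pi|\ge 7$ and $\pi\notin K^{(2)}$, and yet every sum indecomposable child of $\pi$ lies in $K^{(2)}$. Let $\mu$ denote the monotone quotient of $\pi$ and write $\pi=\mu[\alpha_1,\ldots,\alpha_m]$. Combined with Propositions~\ref{prop-K1} and~\ref{prop-K2}, the hypothesis $\pi\notin K^{(2)}$ forces one of: (a) $\mu$ is an increasing oscillation of length $m\ge 3$ and some non-leaf position $j$ of $G_\mu$ has $|\alpha_j|\ge 2$; or (b) $G_\mu$ is connected but not a path (so $G_\mu$ is either the $4$-cycle of $\mu=3412$, a claw with $\mu\in\{2341,4123\}$, or a connected graph that is neither a path, a cycle, nor a claw). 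In each case the strategy is to exhibit an entry $x$ of $\pi$ such that $\pi-x$ is sum indecomposable and still displays one of the two structural defects that keep a permutation outside $K^{(2)}$.

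In Case~(a), if some non-leaf $\alpha_j$ has $|\alpha_j|\ge 3$, or if two distinct non-leaves are inflated, one deletes an entry from an inflated non-leaf block, keeping $\mu$ as the quotient with a non-leaf still inflated, so the child is outside $K^{(2)}$. The remaining sub-case is that exactly one non-leaf $\alpha_j$ is inflated, and only to size $2$, every other block being a singleton; then $n=m+1\ge 7$ forces $m\ge 6$. Here one deletes the singleton entry at the leaf of $G_\mu$ opposite to $\alpha_j$ along the path. The essential technical input is the following merging criterion, easily derived from the definition of monotone quotient: two adjacent blocks $\alpha_a,\alpha_{a+1}$ of $\pi$ merge upon deletion of a singleton $\alpha_k$ if and only if $|\mu(a)-\mu(a+1)|=2$, the rank $\mu(k)$ is exactly the midpoint, and the combined sequence $\alpha_a\alpha_{a+1}$ is monotone. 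A direct inspection of the rank pattern $2,4,1,6,3,8,5,\dots$ of primary increasing oscillations (and, by inversion symmetry, of their inverses) shows that for $m\ge 5$ the only adjacent rank-gaps of size~$2$ occur at the first and last pair of positions, with their midpoint ranks sitting at interior positions of $\mu$ rather than at either of the two leaves; hence deleting a leaf singleton of $G_\mu$ never triggers merging, and the child's quotient is the shorter oscillation $\mu-v$ with $\alpha_j$ still located at a non-leaf.

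In Case~(b), when $G_\mu$ is neither a path, a cycle, nor a claw, Proposition~\ref{prop-non-path-child} yields a non-cut vertex $v$ of $G_\mu$ whose deletion leaves a sum indecomposable permutation whose inversion graph is not a path. If some $\alpha_{i}$ with $i\ne\mu^{-1}(v)$ has $|\alpha_i|\ge 2$, delete an entry of $\alpha_i$ to keep $\mu$ as the quotient; otherwise delete the singleton corresponding to $v$ itself, invoking Proposition~\ref{prop-delete} for sum indecomposability and the merging criterion to confirm that the child's quotient retains a non-path inversion graph. When $G_\mu$ is a cycle or a claw, $|\mu|=4$ (the monotone permutation $321$ cannot itself be a monotone quotient), and $n\ge 7$ forces $\sum_i|\alpha_i|\ge 7$, providing at least three units of inflation budget beyond the base. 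A finite case analysis by inflation pattern then exhibits, in each, an entry whose deletion leaves a cycle- or claw-substructure alive in the child's quotient. The sharpness of the threshold $n\ge 7$ is witnessed by the footnote's $324651=2341[21,1,21,1]$, where only two units of inflation are present and every sum indecomposable child's quotient collapses onto a shorter increasing oscillation.

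The main obstacle throughout is controlling the interaction between deletion and the monotone quotient operation: removing a singleton entry of $\pi$ can cause previously-distinct maximal monotone intervals elsewhere in $\pi$ to merge after restandardization, shortening the quotient in ways that can accidentally move an inflated block from a non-leaf to a leaf position or collapse a non-path inversion graph into a path. Establishing the merging criterion cleanly and then verifying in each sub-case that the inflation budget guaranteed by $n\ge 7$ is large enough to escape this trap is where the bulk of the argument lies.
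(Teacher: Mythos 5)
The proposal follows the same high-level plan as the paper's proof: split on whether the monotone quotient $\mu$ of $\pi$ has $G_\mu$ a path, and in each branch exhibit an entry whose deletion preserves (in the child's quotient) one of the two features that keep a permutation out of $K^{(2)}$. You even make explicit a point the paper leaves silent, namely that in the path case one must delete the leaf singleton \emph{farther from} the inflated non-leaf. However, as written the argument has several genuine gaps, and indeed you concede in the closing paragraph that the ``bulk of the argument'' has not yet been carried out.

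The most concrete problems are these. First, in Case~(a) the reduction from ``exactly one non-leaf $\alpha_j$ inflated, to size $2$'' to ``every other block being a singleton'' is unjustified: a leaf of $\mu$ may also carry a non-trivial monotone interval, and this sub-case is never treated. (It can be handled by deleting from the inflated leaf, but one must then argue that shrinking a size-$2$ leaf block to a singleton does not force a merger with its $\mu$-neighbour; your merging criterion is the right tool, but it is not invoked here.) Second, the ``iff'' in the stated merging criterion is too strong as formulated: it omits mergers between $\alpha_{k-1}$ and $\alpha_{k+1}$ caused by removing the block $\alpha_k$ that sat \emph{between} them in position, and says nothing about cascading mergers. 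For the specific leaf deletions you perform this omission happens to be harmless, but that needs to be checked rather than asserted. Third, and most seriously, Case~(b) when $G_\mu$ is a cycle or a claw is declared settled by ``a finite case analysis by inflation pattern,'' but no analysis is actually given; this is precisely the regime where one has exactly three units of inflation beyond the $4$-vertex base, and where some of the tempting deletions do change the quotient in ways that land back inside $K^{(2)}$. Relatedly, the instruction ``otherwise delete the singleton corresponding to $v$ itself'' in the remaining part of Case~(b) does not parse when $\alpha_{\mu^{-1}(v)}$ is the one block that is not a singleton. The overall strategy is sound and matches the paper, but in its present form the proposal is a plan for a proof rather than a proof.
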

\begin{proof}
Suppose that $\pi\notin K^{(2)}$ has length $n\ge 7$ and let $\mu$ denote its monotone quotient. If $\mu$ is an increasing oscillation then $\pi$ must contain a non-trivial monotone interval corresponding to a non-leaf of $\mu$. Since $n\ge 7$, either $|\mu|\ge 4$ or some entry of $\mu$ is inflated by an interval of length at least $3$. In either case, an entry of $\pi$ can be deleted to yield a child of $\pi$ not in $K^{(2)}$.

In the case where $\mu$ is not a path, we consider two subcases. If $|\mu|\ge 5$, then by Proposition~\ref{prop-non-path-child}, $\mu$ contains a sum indecomposable child that is not an increasing oscillation, from which it follows immediately that $\pi$ contains a sum indecomposable child that does not lie in $K^{(2)}$. Otherwise $|\mu|\le 4$. If $\pi$ has four non-trivial monotone intervals, we see from the characterization of $K^{(2)}$ that $\pi$ has no sum indecomposable children in $K^{(2)}$ and thus satisfies the conclusion of the proposition. Otherwise, because $n\ge 7$ and $|\mu|\le 4$, $\pi$ contains a monotone interval of length at least $3$. Deleting an entry from this interval yields a sum indecomposable child of $\pi$ that does not lie in $K^{(2)}$.
\end{proof}

We now establish the final result of the section.

\begin{proposition}
\label{prop-4-implies-4}
	If the permutation class $\C$ contains $4$ sum indecomposable permutations of length $n \geq 6$ then it contains at least $4$ sum indecomposable permutations of length $n-1$. 
\end{proposition}
\begin{proof}
The case $n=6$ can be checked by computer. Suppose that $\C$ contains sum indecomposable permutations $\pi_1$, $\pi_2$, $\pi_3$, and $\pi_4$ of length $n \geq 7$, labeled so that $|K(\pi_1)| \geq |K(\pi_2)| \geq |K(\pi_3)| \geq |K(\pi_4)|$. We may assume that $\pi_1\in K^{(3)}$, as otherwise we are done. We use the observation that the sets $K^{(1)}$ and $K^{(2)}$ are closed downward\footnote{It can be shown that all sets $K^{(m)}$ are closed downward in this sense, but we only use this fact for $K^{(1)}$ and $K^{(2)}$.} in the sense that if $\pi$ and $\sigma$ are sum indecomposable and $\sigma \leq \pi$, then $\pi \in K^{(1)}$ implies $\sigma \in K^{(1)}$, and $\pi \in K^{(2)}$ implies $\sigma \in K^{(2)}$. This is easily seen by examining the forms of such permutations established by Propositions~\ref{prop-K1} and \ref{prop-K2}.

The proof is via case analysis. First we treat the case $|K(\pi_1)| = 3$. If $|K(\pi_2)| = 3$ as well, then reconstruction (Theorem~\ref{thm-reconstruction-K}) guarantees that $\pi_2$ has a child different from those of $\pi_1$, and the proof is complete. Otherwise let $K(\pi_1) = \{\sigma_1, \sigma_2, \sigma_3\}$. By Proposition~\ref{prop-no-3K2} we may assume that $\sigma_1\notin K^{(2)}$. Proposition~\ref{prop-3-implies-3} then shows that $\pi_2$, $\pi_3$, and $\pi_4$ together have at least three sum indecomposable children, none equal to $\sigma_1$ because $\sigma_1\notin K^{(2)}$, completing this case.

We proceed to the case where $|K(\pi_1)| = 2$, which is more delicate than the previous case. Suppose first that $|K(\pi_2)| = 2$ and that $\pi_1$ and $\pi_2$ are not both increasing oscillations. Then together $\pi_1$ and $\pi_2$ have at least three sum indecomposable children, say $\sigma_1$, $\sigma_2$, and $\sigma_3$, by reconstruction.

Suppose that $|K(\pi_3)|=2$. If $K(\pi_3)\subseteq\{\sigma_1,\sigma_2,\sigma_3\}$ then by Proposition~\ref{prop-no-2K1} and reconstruction, at least two of these children do not lie in $K^{(1)}$; suppose that $\sigma_1,\sigma_2\notin K^{(1)}$. If $|K(\pi_4)|=2$ then we are done by reconstruction as four permutations from $K^{(2)}$ cannot together have only $3$ sum indecomposable children. Thus we may assume that $\pi_4\in K^{(1)}$ and so we are done unless $K(\pi_4)=\{\sigma_3\}$. However, simple case analysis based on the three possibilities for $\sigma_3$ shows that this is not possible.

If instead $\pi_3,\pi_4\in K^{(1)}$ then (up to labeling) the only way the $\pi_i$ could share $3$ sum indecomposable children is as in the diagram below.
	\[
		\begin{tikzpicture}[baseline=(current bounding box.center)]
			\node (A) at (0,0) {$\pi_3$};
			\node (B) at (1,0) {$\pi_1$};
			\node (C) at (2,0) {$\pi_2$};
			\node (D) at (3,0) {$\pi_4$};
			\node (E) at (0.5, -0.7) {$\sigma_1$};
			\node (F) at (1.5, -0.7) {$\sigma_2$};
			\node (G) at (2.5, -0.7) {$\sigma_3$};
			\draw (A) -- (E);
			\draw (B) -- (E);
			\draw (B) -- (F);
			\draw (C) -- (F);
			\draw (C) -- (G);
			\draw (D) -- (G);
		\end{tikzpicture}
	\]
In this case we may assume by symmetry that $\sigma_1=1\ominus (12\cdots(n-2))$. Moreover, $\sigma_3$ is obtained from $\sigma_1$ by inserting two entries and deleting two entries. Thus $\sigma_3$ cannot be decreasing, so the only possibility is that $\sigma_3=(12\cdots(n-2))\ominus 1$. In this case it is straight-forward to see that no choice of $\pi_1$ and $\pi_2$ makes this diagram possible.

If $\pi_1$ and $\pi_2$ are increasing oscillations, we may assume that $|K(\pi_3)| = |K(\pi_4)| = 1$ as otherwise we are in a case that we have already treated. By reconstruction, the sum indecomposable children of $\pi_3$ and $\pi_4$ must be different, and neither of the two can be increasing oscillations because they lie in $K^{(1)}$, so $\pi_1$, $\pi_2$, $\pi_3$, and $\pi_4$ together have $4$ sum indecomposable children, as desired.
	
There are two final cases. If $|K(\pi_1)| = 2$ and $|K(\pi_2)| = |K(\pi_3)| = |K(\pi_4)| = 1$ then $\pi_2$, $\pi_3$, and $\pi_4$ together have $3$ sum indecomposable children by reconstruction, all of which belong to $K^{(1)}$, while $\pi_1$ has a sum indecomposable child that does not lie in $K^{(1)}$, giving the $4$ sum indecomposable children we need. Finally, we cannot have $|K(\pi_i)|=1$ for all $i$ because there are precisely $3$ such permutations of each length. 
\end{proof}

\section{Narrowing the Search I --- Basic Calculations}
\label{sec-narrowing-1}

Recall that we say the sequence $(s_n)$ can be realized if there is a permutation class with $s_n$ sum indecomposable permutations of every length $n\ge 1$. We now begin the investigation of sequences of sum indecomposable permutations that lead to classes with growth rates less than $\xi$ and of those, which can be realized. We separate this endeavor into roughly three tasks. In this section, we establish basic results on those sequences of sum indecomposable permutations leading to growth rates greater than $\xi$. In the next section, we show that certain sequences not eliminated in this section still cannot be realized. Finally, in Section~\ref{sec-realizing}, we realize the remaining sequences.

The results of the previous section impose restrictions on how a sequence $(s_n)$ of sum indecomposable permutations can taper. In particular, we know that (for $n$ large enough) once the sequence drops below $4$, it stays below $4$, once it drops below $3$, it stays below $3$, once it drops below $2$, it stays below $2$, and once it hits $0$, it stays at $0$. More precisely:
\begin{itemize}
\item for $n\ge 1$, if $s_n=0$ then $s_{n+1}=0$ (trivially),
\item for $n\ge 3$, if $s_n\le 1$ then $s_{n+1}\le 1$ (by Proposition~\ref{prop-2-implies-2}),
\item for $n\ge 4$, if $s_n\le 2$ then $s_{n+1}\le 2$ (by Proposition~\ref{prop-3-implies-3}), and
\item for $n\ge 5$, if $s_n\le 3$ then $s_{n+1}\le 3$ (by Proposition~\ref{prop-4-implies-4}).
\end{itemize}
We say that the sequence $(s_n)$ is \emph{legal} if it satisfies these five rules as well as $s_1\le 1$, $s_2\le 1$, and $s_3\le 3$ (these are the numbers of sum indecomposable permutations of those lengths). Only legal sequences of sum indecomposable permutations can be realized, though as we will see, not all legal sequences can be realized.

Recall from Section~\ref{sec-xi} that we say that the sequence $(r_n)$ is dominated by $(t_n)$, written $(r_n)\preceq (t_n)$, if $r_n\le t_n$ for all $n$. If the sequence $(r_n)$ of sum indecomposable permutations leads to a growth rate greater than $\xi$, every sequence dominating $(r_n)$ also leads to a growth rate greater than $\xi$.

\begin{table}
\begin{footnotesize}
\[
	\begin{array}{lll}
	\hline
	\mbox{sequence}&\mbox{growth rate is the greatest real root of}&\mbox{bound}
	\\\hline
	\\[-8pt]
		1,1,2,4,3,3,2,1
		&
		x^5-2x^4-x^2-x-1
		&
		\xi\approx 2.30522
	\\[1pt]
		1,1,2,4,3,3,3
		&
		x^7-x^6-x^5-2x^4-4x^3-3x^2-3x-3
		&
		\phantom{\xi}> 2.30688
	\\[1pt]
		1,1,2,4,4,1,1,1,1,1,1
		&
		x^{11}-x^{10}-x^9-2x^8-4x^7-4x^6-x^5-x^4-x^3-x^2-x-1
		&
		\phantom{\xi}> 2.30525
	\\[1pt]
		1,1,2,4,4,2
		&
		x^6-x^5-x^4-2x^3-4x^2-4x-2
		&
		\phantom{\xi}> 2.30692
	\\[1pt]
		1,1,2,4,5
		&
		x^5-x^4-x^3-2x^2-4x-5
		&
		\phantom{\xi}> 2.30902
	\\[1pt]
		1,1,2,5,2,1,1
		&
		x^6-2x^5+x^4-3x^3-2x^2-1
		&
		\phantom{\xi}> 2.30790
	\\[1pt]
		1,1,2,5,2,2
		&
		x^6-x^5-x^4-2x^3-5x^2-2x-2
		&
		\phantom{\xi}> 2.31179
	\\[1pt]
		1,1,2,5,3
		&
		x^5-x^4-x^3-2x^2-5x-3
		&
		\phantom{\xi}> 2.31392
	\\[1pt]
		1,1,3,3,1,1,1,1,1,1
		&
		x^{10}-x^9-x^8-3x^7-3x^6-x^5-x^4-x^3-x^2-x-1
		&
		\phantom{\xi}> 2.30528
	\\[1pt]
		1,1,3,3,2\
		&
		x^5-x^4-x^3-3x^2-3x-2
		&
		\phantom{\xi}> 2.30939
	\\[1pt]
		1,1,3,4
		&
		x^3-2x^2+x-4
		&
		\phantom{\xi}> 2.31459
	\\[1pt]\hline
	\end{array}
\]
\end{footnotesize}
\caption{Short legal sequences leading to growth rates of at least $\xi$.}
\label{table-short-bad}
\end{table}

Table~\ref{table-short-bad} lists several $\preceq$-minimal legal sequences of sum indecomposable permutations that lead to growth rates equal to or greater than $\xi$. Therefore these sequences---and all sequences that dominate them---do not need to be considered in the categorization of growth rates below $\xi$. This table leaves open the sequence $1,1,2,6$ (and two sequences which dominate it, $1,1,2,6,1$ and $1,1,2,7$) but it is easy to see that these sequences cannot be realized: in order to have only $2$ sum indecomposable permutations of length $3$, a class must avoid $231$, $312$, or $321$, but each of the classes $\Av(231)$, $\Av(312)$, and $\Av(321)$ contains only $5$ sum indecomposable permutations of length $4$.

Table~\ref{table-long-bad} lists several more families of legal sequences that lead to growth rates equal to or greater than $\xi$. It differs from Table~\ref{table-short-bad} in that the sequences it presents can be arbitrarily long. Moreover, except for the first two sequences in this table, the growth rates of the corresponding classes (if the sequences can be realized, a question we do not need to consider) converge from above to $\xi$. To demonstrate this we need a basic analytic fact.

\begin{table}
\begin{footnotesize}
\[
	\begin{array}{ll}
	\hline
	\mbox{sequence}&\mbox{growth rate is the greatest real root of}
	\\\hline
	\\[-8pt]
		1, 1, 2, 3, 4^\infty
		&
		x^5-2x^4-x^2-x-1
	\\[1pt]
		1,1,2,3,4^i,5,3,3,2,1
		&
		x^5-2x^4-x^2-x-1
	\\[1pt]
		1,1,2,3,4^i,5,3,3,3
		&
		x^{i+4}\left(x^5-2x^4-x^2-x-1\right)-x^4+2x^3+3
	\\[1pt]
		1,1,2,3,4^i,5,4,1,1,1,1,1,1
		&
		x^{i+8}\left(x^5-2x^4-x^2-x-1\right)-x^8+x^7+3x^6+1
	\\[1pt]
		1,1,2,3,4^i,5,4,2
		&
		x^{i+3}\left(x^5-2x^4-x^2-x-1\right)-x^3+x^2+2x+2
	\\[1pt]
		1,1,2,3,4^i,5,5
		&
		x^{i+2}\left(x^5-2x^4-x^2-x-1\right)-x^2+5
	\\[1pt]
		1,1,2,3,4^i,6,2,1,1
		&
		x^{i+4}\left(x^5-2x^4-x^2-x-1\right)-2x^4+4x^3+x^2+1
	\\[1pt]
		1,1,2,3,4^i,6,2,2
		&
		x^{i+3}\left(x^5-2x^4-x^2-x-1\right)-2x^3+4x^2+2
	\\[1pt]
		1,1,2,3,4^i,6,3
		&
		x^{i+2}\left(x^5-2x^4-x^2-x-1\right)-2x^2+3x+3
	\\[1pt]
		1,1,2,3,4^i,7,1
		&
		x^{i+2}\left(x^5-2x^4-x^2-x-1\right)-3x^2+6x+1
	\\[1pt]
		1,1,2,3,4^i,8
		&
		x^{i+1}\left(x^5-2x^4-x^2-x-1\right)-4x+8
	\\[1pt]\hline
	\end{array}
\]
\end{footnotesize}
\caption{Long legal sequences leading to growth rates of at least $\xi$.}
\label{table-long-bad}
\end{table}

Consider for example the final sequence in this chart, $1,1,2,3,4^i,8$. If this sequence could be realized, the generating function for the sum closed class that did realize it would be
\[
	\frac{1}{1-\left(x+x^2+2x^3+3x^4+4x^5\frac{1-x^i}{1-x}+8x^{i+5}\right)}
	=
	\frac{1-x}{1-2x-x^3-x^4-x^5-4x^{i+5}+8x^{i+6}}.
\]
Proceeding as usual, the growth rate of the corresponding permutation class would be the greatest real root of
\[
	x^{i+1}\left(x^5-2x^4-x^2-x-1\right)-4x+8.
\]
In fact, all of the polynomials in Table~\ref{table-long-bad} except the first two (which both have $\xi$ as their greatest real root) are of the form $x^if(x)+g(x)$ where the greatest real root of $f(x)$ is $\xi$ and $g(x)$ is a polynomial with negative leading term. In this case we see that $g(x)$ is negative for all $x>2$ and thus the growth rates corresponding to sequences in this family converge to $\xi$ from above by the following.

\begin{proposition}
\label{prop-stupid-analysis}
Suppose that $f(x)$ and $g(x)$ are polynomials, that the largest real root of $f$ is $r>1$, and that $f'(x)$ is positive for $x>r$. Define $h_i(x)=x^if(x)+g(x)$. If there exists a $\delta>0$ such that $g$ is negative on the interval $[r,r+\delta)$ then the largest roots of the functions $h_i$ converge from above to $r$.
\end{proposition}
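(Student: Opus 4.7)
The plan is to handle this with elementary real analysis, using the intermediate value theorem and a crude polynomial size estimate. There are really three things to check: that $h_i$ admits a real root exceeding $r$ (so the notion of ``largest root'' makes sense and is automatically $> r$), that this root is the largest, and that it lies arbitrarily close to $r$ once $i$ is large.

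First I would observe that the hypotheses on $f$ force $f(x) > 0$ for every $x > r$: since $f(r)=0$ and $f'(x)>0$ on $(r,\infty)$, $f$ is strictly increasing past $r$. In particular $f$ has positive leading coefficient, so for every fixed $i\ge 0$, $h_i(x) = x^i f(x) + g(x) \to +\infty$ as $x\to\infty$. On the other hand $h_i(r) = r^i f(r) + g(r) = g(r) < 0$ by hypothesis. The intermediate value theorem then supplies a real root of $h_i$ in $(r,\infty)$; let $r_i$ denote the largest one. By construction $r_i > r$.

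Next I would show $r_i < r+\varepsilon$ for any fixed $\varepsilon\in(0,\delta)$ once $i$ is sufficiently large. The idea is to bound $h_i$ strictly from below on $[r+\varepsilon,\infty)$. Set $c = f(r+\varepsilon) > 0$; by monotonicity $f(x)\ge c$ for all $x\ge r+\varepsilon$. Writing $g(x)=\sum_{k=0}^{d}a_k x^k$, for $x\ge r+\varepsilon>1$ we have the crude bound $|g(x)| \le Cx^d$ with $C=\sum_{k=0}^d|a_k|$. Therefore
\[
    h_i(x) \;\ge\; cx^i - Cx^d \;=\; x^d\bigl(cx^{i-d} - C\bigr)
    \quad\text{for all } x\ge r+\varepsilon.
\]
Since $r+\varepsilon>1$, we have $x^{i-d}\ge (r+\varepsilon)^{i-d}\to\infty$ as $i\to\infty$, so for $i$ large enough (certainly $i>d$ and $c(r+\varepsilon)^{i-d}>C$) we get $h_i(x)>0$ throughout $[r+\varepsilon,\infty)$. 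Hence no root of $h_i$ can lie in this interval, forcing $r_i<r+\varepsilon$.

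Combining the two steps gives $r<r_i<r+\varepsilon$ for all sufficiently large $i$, which is exactly convergence from above to $r$. I do not anticipate a genuine obstacle here: the only mild subtlety is that $g$ itself may be oscillatory or of high degree, but the bound $|g(x)|\le Cx^d$ together with the monotonicity of $f$ on $[r,\infty)$ tames it immediately.
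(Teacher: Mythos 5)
Your proof is correct. The lower bound $r_i > r$ is obtained exactly as in the paper (IVT from $h_i(r) = g(r) < 0$ to $h_i(+\infty) = +\infty$). Where you diverge is in the upper bound: the paper shows $h_n(r+\epsilon) > 0$ and, separately, that $h_n'(x) > 0$ for all $x > r$ once $n$ is large, using the monotonicity to conclude that the root just found in $(r, r+\epsilon)$ is the largest; you instead bound $h_i(x) \ge cx^i - Cx^d > 0$ on the whole ray $[r+\epsilon,\infty)$, which immediately pins $r_i$ below $r+\epsilon$ without ever touching $h_i'$. This is a modest but real simplification — the paper's derivative step is the more delicate one (it must be argued uniformly in $x$ down to $r$, where $f$ vanishes, and incidentally the paper's displayed formula has a typo, $g(x)$ where $g'(x)$ should appear), whereas your crude estimate $|g(x)|\le Cx^d$ together with $f(x)\ge f(r+\epsilon)$ on $[r+\epsilon,\infty)$ handles everything at once. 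The one tiny nit is your aside that $h_i(x)\to+\infty$ for \emph{every} $i\ge 0$: that can fail if $\deg g > i + \deg f$, so you should say ``for $i$ large,'' but since the proposition is a statement about the limit in $i$, this does not affect anything.
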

\begin{proof}
Because $h_i(r) < 0$ and $h_i(x) \to \infty$ as $x \to \infty$, we may conclude by the Intermediate Value Theorem that the largest real root of $h_i(x)$ is strictly greater than $r$ for all $i$.
	
To prove convergence, let $\epsilon > 0$ be given, and assume without loss of generality that $\epsilon < \delta$. We seek an integer $n_0$ such that for all $n \geq n_0$, the largest real root of $h_n(x)$ lies in the interval $(r, r+\epsilon)$. As $h_i(r) < 0$ for all $i$, it suffices to find $n_0$ such that $h_n(r + \epsilon) > 0$ for all $n \geq n_0$, or equivalently,
\[
	(r+\epsilon)^nf(r+\epsilon) + g(r+\epsilon) > 0.
\]
Moreover, to ensure that the root found in the interval $(r, r+\epsilon)$ is the largest real root, we also choose $n_0$ large enough so that $h_n'(x) > 0$ for $x>r$ for all $n \geq n_0$, or equivalently, 
\[
	x^{n-1}(xf'(x) + nf(x)) + g(x) > 0.
\]
As $f(x) > 0$ and $f'(x) > 0$ for $x>r$, such an $n_0$ can be chosen to satisfy both demands.
\end{proof}

\section{Narrowing the Search II --- The Insertion Encoding}
\label{sec-narrowing-2}

There are several families of legal sequences that would lead to growth rates less than $\xi$ yet are not realizable. We eliminate these sequences here by way of a computer search.\footnote{The \texttt{Python} code to perform the computer search can be found at the first author's website at \href{http://jaypantone.com/publications/010-xi-grs/}{http://jaypantone.com/publications/010-xi-grs/}.} The results of this section are collected below.

\begin{proposition}
\label{prop-no-1123-5}
No realizable sequence beginning with $1,1,2,3$ may contain an entry greater than $5$ before it contains an entry equal to $5$.
\end{proposition}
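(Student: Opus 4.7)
The plan is a finite computer-aided case analysis grounded in the fact that a sum closed class $\C$ is determined by its set of sum indecomposable permutations. The constraints $s_3=2$ and $s_4=3$ mean that such a class's length-$3$ and length-$4$ sum indecomposable members come from finitely many subsets of the $3$ length-$3$ and $13$ length-$4$ sum indecomposable permutations; the containment-closure requirement (every sum indecomposable subpermutation of a sum indecomposable permutation in $\C$ must itself lie in $\C$) narrows the possibilities further, to a manageable list of candidate initial configurations.

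For each candidate initial configuration, I grow the class one length at a time. At length $n+1$, the candidates for newly introduced sum indecomposable permutations are exactly those whose sum indecomposable subpermutations of length $\le n$ already appear in the class so far. I then branch over every admissible subset of these candidates, maintaining the running hypothesis that no entry of size $5$ has yet appeared, i.e.\ $s_k \ne 5$ for all $5 \le k \le n$. The claim is that along every surviving branch the new entry $s_{n+1}$ must be at most $5$; verifying this reduces the proposition to checking finitely many tree paths mechanically.

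The main obstacle is termination, since the branching tree could a priori extend to every length. This is where the insertion encoding, the subject of this section, plays its role: classes in which $s_n$ stays bounded by a small constant admit bounded insertion encodings, so their sets of sum indecomposable permutations form regular languages described by finite automata. Beyond some bounded depth each surviving branch can be replaced by such an automaton, from which every subsequent count $s_m$ is determined in closed form and can be inspected directly for the pattern ``$s_m \ge 6$ without a prior $s_k = 5$''. The search therefore halts after finitely many levels and is executed by the computer search indicated above.
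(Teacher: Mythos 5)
Your proposal captures the paper's general strategy---a finite, computer-aided case analysis in which the insertion encoding certifies the tail behaviour of each candidate class---but two of the bridges you lean on are either imprecise or missing.

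The first is the justification that the insertion encoding applies. You assert that ``classes in which $s_n$ stays bounded by a small constant admit bounded insertion encodings.'' That is not the regularity criterion and is not in general correct: regularity of the insertion encoding is governed by Theorem~\ref{thm-insertion-encoding} (Albert, Linton, Ru\v{s}kuc), which says the insertion encoding is regular if and only if the class contains no arbitrarily long vertical alternations. The step the paper actually makes is the observation that any class containing arbitrarily long vertical wedge or parallel alternations already has at least $4$ sum indecomposable permutations of length $4$; thus the constraint $s_4=3$ built into the hypothesis guarantees a regular insertion encoding for every class under consideration. Your argument skips this bridge entirely, and without it there is no reason a given ``branch'' yields a regular language.

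The second gap is in the shape and termination of the search itself. In your bottom-up scheme a ``branch'' is a partial assignment of sum indecomposable permutations up to some length; it is not yet a permutation class, so it is not clear what object the insertion encoding is being applied to, nor why the branching must bottom out. The paper instead proceeds top-down: it starts with the $30$ maximal finitely based classes $\Av(B)$ having $s_3=2$, $s_4=3$, computes each class's sum indecomposable sequence via its (regular) insertion encoding, and whenever that sequence first reaches $5$ at some length $i$ and then later exceeds $5$, it recurses on the finitely many subclasses $\Av(B\cup\{\alpha\})$ over all length-$i$ sum indecomposable $\alpha$ in the class. Every node of this search tree is a concrete finitely based class with a regular insertion encoding, and termination is observed empirically ($178$ classes in total). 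Your outline would need to be reorganised along these lines, or else supply a genuine termination argument and a correct identification of the class each branch represents, before the computer search it proposes is well defined.
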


\begin{proposition}
\label{prop-no-112344-5-even}
No realizable sequence beginning with $1,1,2,3,4,4$ may have an even-indexed entry equal to $5$.
\end{proposition}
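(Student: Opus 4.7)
The plan is to reduce the proposition to a finite computer search carried out via the insertion encoding. By Theorem~\ref{thm-xi-main} any realization of the prefix $1,1,2,3,4,4$ may be taken to be a sum closed class, and hence has a basis consisting entirely of sum indecomposable permutations. Using the inverse symmetry, which preserves the sum indecomposable enumeration, I would first reduce to a smaller symmetry class. The prefix specifies for each $n\le 6$ exactly how many sum indecomposable permutations of length $n$ the class contains, so enumerating which specific sum indecomposables of each length at most $6$ are present in the class yields a finite list of candidate sub-bases $B_0$, each consisting only of sum indecomposable permutations of length at most $6$.

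For each candidate $B_0$, the class $\C_0=\Av(B_0)$ is sum closed (a sum decomposable permutation cannot be a minimal avoider of a set of sum indecomposable permutations), and it is the largest sum closed class whose sum indecomposable counts agree with $1,1,2,3,4,4$ through length $6$. Any putative realizing class $\C$ with this prefix satisfies $\C\subseteq\C_0$, and therefore $s_n(\C)\le s_n(\C_0)$ for every $n$. Because $B_0$ is a finite set of sum indecomposable permutations, the insertion encoding of $\C_0$ should produce a regular language, and hence a rational generating function enumerating its sum indecomposable permutations; this is what the referenced Python implementation computes. The target of the search is to verify, for every candidate $\C_0$, that $s_n(\C_0)\le 4$ at every even $n\ge 8$, which by the preceding inequality precludes $s_n(\C)=5$ at any even $n$.

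The main obstacle I expect is the combinatorial size of the candidate list together with the need to ensure each candidate admits a regular insertion encoding. The hereditary restrictions of Propositions~\ref{prop-2-implies-2}--\ref{prop-4-implies-4} help here: they tightly constrain how the prefix $1,1,2,3,4,4$ can extend, keeping the relevant sub-bases of bounded length and the candidate classes sufficiently close to the ones built from oscillations in Section~\ref{sec-xi}. Once the finitely many rational generating functions have been produced, extracting the even-indexed coefficients and confirming the desired inequality case by case is a routine mechanical check, and this is precisely what the computer search automates.
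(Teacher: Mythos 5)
Your plan matches the paper's proof in all essential respects: reduce to sum closed classes, enumerate the finitely many sum closed classes $\C_0$ whose basis elements have length at most $6$ and whose sum indecomposable counts begin $1,1,2,3,4,4$, then compute the sum indecomposable generating functions via the insertion encoding and check the even-indexed entries. The paper carries this out and finds $173$ such classes, of which only two (a class and its inverse) ever have a $5$ at any index, and those have sequence $1,1,2,3,4,4,5,4,5,4,\dots$ with the $5$s confined to odd indices; your formulation as ``$s_n(\C_0)\le 4$ at all even $n\ge 8$'' is logically equivalent. Two small points. First, invoking Theorem~\ref{thm-xi-main} to reduce to the sum closed case is both off-target and unnecessary: that theorem is a statement about growth rates, not about which sequences of sum indecomposable permutations arise, and the reduction is trivial anyway since passing from $\C$ to its sum closure $\bigoplus\C$ leaves the sum indecomposable permutations unchanged. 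Second, your plan should include the justification (given earlier in Section~\ref{sec-narrowing-2}) that these candidate classes actually have regular insertion encodings; this follows because a class containing arbitrarily long vertical alternations contains at least $4$ sum indecomposable permutations of length $4$, whereas the prefix forces exactly $3$, so Theorem~\ref{thm-insertion-encoding} applies. Neither point affects the correctness of the overall strategy.
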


\begin{proposition}
\label{prop-no-112344-5-11}
No realizable sequence beginning with $1,1,2,3,4,4$ and containing a $5$ may end with $1,1$.
\end{proposition}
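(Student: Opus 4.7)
The plan is to reduce the statement to a finite exhaustive search by combining the structural constraints established so far. Suppose $(s_n)$ is a realizable sequence of the prescribed form and let $\C$ be a class realizing it. Passing to $\bigoplus\C$ does not change the sequence of sum indecomposable counts, so we may assume $\C$ is sum closed; then $\C$ is entirely determined by its finite set $\mathcal{I}$ of sum indecomposable permutations, finiteness following because the sequence terminates.

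First I would pin down the structure at both ends. The trailing $1,1$ forces the unique sum indecomposable permutations $\pi_N,\pi_{N+1}\in\C$ at the top two lengths to lie in $K^{(1)}$; by Proposition~\ref{prop-K1} each is one of the three perms $m\cdots 21$, $1\ominus(12\cdots(m-1))$, or $(12\cdots(m-1))\ominus 1$, and since $\pi_{N+1}$ has a unique sum indecomposable child which must equal $\pi_N$, both are of the same one of these three types. At the other end, Propositions~\ref{prop-no-1123-5} and \ref{prop-no-112344-5-even} place the first $5$ at an odd index $k\ge 7$ preceded only by entries $1,1,2,3,4,4,\ldots,4$. The reconstruction Theorem~\ref{thm-reconstruction-K} together with Proposition~\ref{prop-K2} forces the four sum indecomposable permutations at each of the lengths $5,\ldots,k-1$ to extend consistently through the lengths as heads, tails, and primary and non-primary bodies (cf.\ Figure~\ref{fig-mu11-kids}), and the fifth permutation at length $k$ must then be a split-end inflated oscillation $\mu_k\in U^o$.

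Second, I would bound the search space. The tapering Propositions~\ref{prop-2-implies-2}--\ref{prop-4-implies-4} ensure that once the sequence drops below a threshold it stays bounded, so after the position $k$ the sequence has only a controlled number of additional nonzero entries before reaching the trailing $1,1,0$. These constraints together leave finitely many candidate configurations of $\mathcal{I}$, parametrized by the tail type, the position $k$ of the $5$, and the transitional structure between $k$ and $N$. For each such candidate, a direct computation --- carried out by the \texttt{Python} script referenced in the section's footnote --- enumerates the sum indecomposable permutations contained in the downward closure of $\mathcal{I}$ and compares the resulting count sequence with the prohibited form; in every case the check fails.

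The main obstacle is controlling the combinatorial explosion of possible $\mathcal{I}$. This is resolved by the structural results above, which at each length restrict the candidate sum indecomposable permutations of $\C$ to an explicit short list --- members of $K^{(1)}$ near the tail, subpermutations of the candidate $\mu_k$ in the middle, and so on --- rather than allowing all sum indecomposable permutations of that length. Without these constraints, the naive search would be hopeless; with them, it terminates and eliminates every candidate.
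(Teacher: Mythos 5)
Your approach diverges substantially from the paper's and has a genuine gap in the key step. After reducing to subclasses of $\bigoplus\Sub(U^o)$ and its inverse (which the paper does via the insertion-encoding computation around Proposition~\ref{prop-basis-sumU}), the paper does not perform any search: it simply checks the uniform fact that every sum indecomposable permutation of length at least $5$ in either enclosing class has at least $2$ sum indecomposable children (the paper writes ``at least $4$'', but $2341\in K^{(1)}$ is a length-$4$ member of $\bigoplus\Sub(U^o)$, so the correct threshold is $5$; this makes no difference since the trailing $1,1$ must occur beyond the index of the first $5$, hence at length $\ge 8$). Since these children all lie in the class, the unique sum indecomposable permutation at the penultimate length would have at least two sum indecomposable permutations one length below, contradicting a trailing $1,1$. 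No enumeration of candidate $\mathcal{I}$'s is needed.

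Your proposal instead claims ``These constraints together leave finitely many candidate configurations of $\mathcal{I}$, parametrized by the tail type, the position $k$ of the $5$, and the transitional structure between $k$ and $N$.'' This is where the argument breaks. Neither $k$ nor $N$ is bounded: the sequence could be $1,1,2,3,4^{2i},5,4^a,3^b,2^c,1,1$ with $i,a,b,c$ arbitrary, and nothing in Propositions~\ref{prop-2-implies-2}--\ref{prop-4-implies-4} or in~\ref{prop-K1} bounds the length before termination. Those results only say the sequence is eventually weakly decreasing, not that it terminates within a fixed window. So the set of candidate $\mathcal{I}$'s is infinite and the proposed computer search does not terminate. (There is also a smaller slip: the trailing $1,1$ forces only $\pi_{N+1}$, not $\pi_N$, into $K^{(1)}$, since $s_{N-1}$ may exceed $1$.) To repair the argument you would need the missing uniform fact the paper uses --- that $K^{(1)}$ contains no member of the enclosing classes of length $\ge 5$ --- at which point the search becomes unnecessary.
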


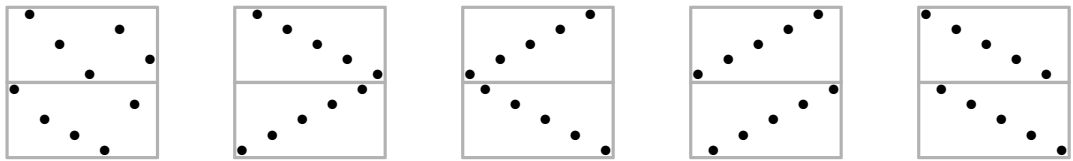
\begin{figure}
\begin{center}
	\begin{tikzpicture}[scale=0.2]
		\plotperm{5, 10, 3, 8, 2, 6, 1, 9, 4, 7};
		\plotpermbox{1}{1}{10}{5};
		\plotpermbox{1}{6}{10}{10};
	\end{tikzpicture}
\quad\quad
	\begin{tikzpicture}[scale=0.2]
		\plotperm{1, 10, 2, 9, 3, 8, 4, 7, 5, 6};
		\plotpermbox{1}{1}{10}{5};
		\plotpermbox{1}{6}{10}{10};
	\end{tikzpicture}
\quad\quad
	\begin{tikzpicture}[scale=0.2]
		\plotperm{6, 5, 7, 4, 8, 3, 9, 2, 10, 1};
		\plotpermbox{1}{1}{10}{5};
		\plotpermbox{1}{6}{10}{10};
	\end{tikzpicture}
\quad\quad
	\begin{tikzpicture}[scale=0.2]
		\plotperm{6, 1, 7, 2, 8, 3, 9, 4, 10, 5};
		\plotpermbox{1}{1}{10}{5};
		\plotpermbox{1}{6}{10}{10};
	\end{tikzpicture}
\quad\quad
	\begin{tikzpicture}[scale=0.2]
		\plotperm{10, 5, 9, 4, 8, 3, 7, 2, 6, 1};
		\plotpermbox{1}{1}{10}{5};
		\plotpermbox{1}{6}{10}{10};
	\end{tikzpicture}
\end{center}
\caption{From left to right, an arbitrary vertical alternation, two vertical wedge alternations, and two vertical parallel alternations.}
\label{fig-vert-alternation}
\end{figure}

We establish Propositions~\ref{prop-no-1123-5}--\ref{prop-no-112344-5-11} using an implementation of the \emph{insertion encoding}, which is a length-preserving bijection between permutation classes and formal languages introduced by Albert, Linton, and Ru\v{s}kuc~\cite{albert:the-insertion-e:}. Importantly, there is a characterization of the classes whose insertion encodings are regular languages. To state this characterization we need a definition: a \emph{vertical alternation} is a permutation in which every entry of odd index lies above every entry of even index, or the complement of such a permutation. For example, the permutations shown in Figure~\ref{fig-vert-alternation} are all vertical alternations. These are precisely the obstructions to having a regular insertion encoding:

\begin{theorem}[Albert, Linton, and Ru\v{s}kuc~\cite{albert:the-insertion-e:}]
\label{thm-insertion-encoding}
The insertion encoding of a permutation class $\C$ forms a regular language if and only if $\C$ does not contain arbitrarily long vertical alternations.
\end{theorem}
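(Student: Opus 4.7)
The plan is to establish both directions by relating the length of vertical alternations in $\C$ to the number of ``active slots'' that arise during the insertion encoding construction. The insertion encoding builds each permutation $\pi\in\C$ one entry at a time, maintaining at each stage an ordered list of active slots (positions where subsequent entries may still be inserted); each letter of the encoding records which slot receives the next entry together with the action type ($\m$, $\l$, $\r$, or $\f$) describing how that slot is transformed. The state of the construction is fully captured by the current ordered sequence of slot types.

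First I would show that if the maximum number of simultaneously active slots across all constructions of permutations in $\C$ is bounded by some constant $k$, then the possible slot configurations form a finite set and may be taken as the states of a deterministic finite automaton whose transitions correspond to insertion actions, whose initial state is a single $\m$-slot, and whose unique accepting state is the empty configuration; the resulting language is exactly the insertion encoding of $\C$. Conversely, if the slot count grows unboundedly over constructions in $\C$, a Myhill--Nerode style argument distinguishing configurations with differing numbers of active slots by appropriate suffixes forces infinitely many states in any automaton recognizing the insertion encoding.

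This reduces the theorem to showing that $\C$ avoids arbitrarily long vertical alternations if and only if the number of simultaneously active slots stays bounded during constructions of permutations in $\C$. In one direction, the insertion encoding of a vertical alternation of length $2k$ necessarily passes through a stage with $\Theta(k)$ simultaneously active slots, since the entries on one side of the horizontal midline must be inserted first (being larger) and each such entry demands its own active slot in order to later accommodate the interleaved entries lying between it and its neighbors on the opposite side. In the other direction, if at some intermediate stage the construction of some $\pi\in\C$ involves $k$ active slots, then inserting a single well-chosen entry into each slot extracts a subpermutation of $\pi$ realizing a vertical alternation of length $\Theta(k)$, which belongs to $\C$ by downward closure.

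The main obstacle will be this last step: verifying that any configuration of $k$ active slots can be populated to produce a long vertical alternation in $\C$. The case analysis must handle the four slot types ($\m$, $\l$, $\r$, $\f$), each of which permits different kinds of future insertions, and one must check that in every case the slots can be filled so as to create the alternating pattern demanded by the definition. The key observation that makes this work is that the slot labels precisely encode which insertions remain legal relative to the already-placed entries, and this local information is exactly what is needed to weave a vertical alternation simultaneously across all $k$ slots.
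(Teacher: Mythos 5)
The paper does not prove this theorem: it is stated and cited from Albert, Linton, and Ru\v{s}kuc, so there is no in-paper argument to compare against. Your high-level plan --- equating regularity with a uniform bound on the number of active slots, and then equating that bound with the absence of long vertical alternations --- matches the source's strategy, and your Myhill--Nerode argument for the ``unbounded slots implies non-regular'' direction is sound: prefixes reaching configurations with different slot counts require suffixes with different numbers of $\f$-actions to terminate, so they lie in distinct Nerode classes.

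The genuine gap is in the ``bounded slots implies regular'' direction. You claim the automaton whose states are slot configurations, with transitions given by the insertion actions, recognizes exactly the insertion encoding of $\C$. It does not: that automaton recognizes the encodings of \emph{all} permutations that never exceed the slot bound, which is generally a strict superclass of $\C$. More fundamentally, the slot configuration records only the interleaving pattern of placed entries and slots, not the relative order of the placed entries; two encoding prefixes producing identical slot configurations can therefore correspond to order-inequivalent partial permutations, and hence to different sets of valid continuations within $\C$. A state of the automaton must remember more than the slot configuration, and pinning down exactly what additional finite data suffices --- given only that $\C$ avoids long vertical alternations and in particular need not be finitely based --- is the nontrivial content of this direction that your sketch omits. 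One smaller correction: the insertion encoding places values in increasing order, so in a vertical alternation the smaller entries on one side of the midline are inserted first, not the larger as your parenthetical states; the slot-counting idea is still right, but the direction is reversed.
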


It follows readily from the Erd\H{o}s--Szekeres Theorem that a permutation class contains arbitrarily long vertical alternations if and only if it contains arbitrarily long vertical alternations of one of the four types shown on the right of Figure~\ref{fig-vert-alternation}. We call these vertical \emph{wedge} alternations and vertical \emph{parallel} alternations.

In order to prove Propositions~\ref{prop-no-1123-5}--\ref{prop-no-112344-5-11}, we need to consider classes whose sequences of sum indecomposable permutations begin with $1,1,2,3$. It is easy to see that every permutation class that contains arbitrarily long vertical wedge or parallel alternation contains at least $4$ sum indecomposable permutations of length $4$, and thus the classes we are interested in have regular insertion encodings.

Moreover, an algorithm is described in Vatter~\cite{vatter:finding-regular:}---using the Myhill--Nerode Theorem~\cite{myhill:finite-automata:,nerode:linear-automato:}---which can compute the generating functions of classes with regular insertion encodings and the generating functions enumerating sum indecomposable permutations in such classes. An implementation of this algorithm is available in a Python package developed by Homberger and Pantone~\cite{PermPy1.0}. We use this algorithm to inspect all potential counterexamples to Propositions~\ref{prop-no-1123-5}--\ref{prop-no-112344-5-11} (and eventually conclude that there are none).

We begin by examining the classes to which Proposition~\ref{prop-no-1123-5} applies. In order for the sequence of sum indecomposable permutations to begin with $1, 1, 2$, the class must avoid one of the $3$ sum indecomposable permutations of length $3$. In each of the three resulting classes---$\Av(231)$, $\Av(312)$, and $\Av(321)$---there are $5$ sum indecomposable permutations of length $4$. In order for the sequence to begin with $1,1,2,3$ we must choose $2$ of these $5$ to add to the basis. 

At this point we have $30$ permutation classes whose sequences of sum indecomposable permutations begin with $1,1,2,3$. For each such class $\C=\Av(B)$ we create a list of the sum indecomposable permutations in the class up to length $10$. If these initial terms do not contain an entry greater than $5$ then we use the insertion encoding to verify that in fact the sequence enumerating sum indecomposable members of the class \emph{never} has an entry greater than $5$. If the initial terms do contain an entry of value greater than $5$ following another entry of value $5$, letting $i$ denote the length corresponding to occurrence of value $5$, we repeat the test on each class whose basis consists of $B$ together with one of the sum indecomposable permutations of length $i$ in $\C$. Obviously, if we encounter a sequence that contains an entry of value greater than $5$ that does not follow a $5$, we have found a counterexample.

For example, one of the $30$ initial classes is $\C = \Av(231, 4312, 4321)$, and the enumeration of the sum indecomposable members of this class is given by the Fibonacci numbers $1, 1, 2, 3, 5, 8, 13, 21, \dots$. The $5$ sum indecomposable permutations of length $5$ are $51234$, $51243$, $51324$, $52134$, and $52143$. To attempt to find a counterexample to Proposition~\ref{prop-no-1123-5} we add each of these to the basis one at a time, bringing the number of sum indecomposable permutations of length $5$ down to $4$, leading us to $5$ new classes to check.

Continuing this example, when we add $51234$ to the basis, the result is a class with a finite sequence of sum indecomposable permutations, $1,1,2,3,4,3,1$. Adding either $51243$, $51324$, or $52134$ to the basis instead yields a class with sum indecomposable sequence $1, 1, 2, 3, 4^i$. More interestingly, the class $\Av(231,4312,4321,52143)$ has sum indecomposable sequence $1, 1, 2, 3, 4, 5, 6, 7, \dots$, and thus its subclasses require further examination. Repeating the process, we form each class whose basis consists of $\{231, 4312, 4321, 52143\}$ together with one of the $5$ sum indecomposable permutations of length $6$. As none of these $5$ subclasses contains $5$ or more sum indecomposable permutations of the same length, none of them are counterexamples to the statement. Therefore, all subclasses of $\Av(231, 4312, 4321)$ satisfy Proposition~\ref{prop-no-1123-5}.

This process terminates after the insertion encoding has been applied to $178$ permutation classes. We then use the insertion encoding to compute the generating function for the sum indecomposable members of each of these classes. In every case the sequence is either finite or ends with an infinite repeating sequence of the same number. For example, the generating function enumerating sum indecomposable members of $\Av(231,4312,4321,52143,613245)$ is computed as
\[
	x + x^2 + 2x^3 + 3x^4 + 4\frac{x^5}{1-x}.
\]
Upon verifying that none of the $178$ generating functions obtained in this manner contains a term greater than $5$, the proof of Proposition~\ref{prop-no-1123-5} is complete.

The proofs of Propositions~\ref{prop-no-112344-5-even} and \ref{prop-no-112344-5-11} are related to each other and similar to that of Proposition~\ref{prop-no-1123-5}. We start with all classes whose bases consist of permutations of length $6$ or less and whose sequences of sum indecomposable members begin $1,1,2,3,4,4$. There are $173$ such classes, and we find via the insertion encoding that only $2$ of them contain $5$ sum indecomposable permutations of the same length, namely
\[
	\Av(321, 3412, 4123, 23451, 314625)
	\quad\text{and}\quad
	\Av(321, 2341, 3412, 51234, 251364).
\]
Note that these classes are inverses of each other. We again use the insertion encoding to compute that the sequence of sum indecomposable permutations in each of these two classes, which is
\[
	1,1,2,3,4,4,5,4,5,4,\dots.
\]
As neither class contains an even-indexed entry equal to $5$, we have proved Proposition~\ref{prop-no-112344-5-even}.

It follows by direct observation that $\bigoplus\Sub(U^o)$ lies in the first of these classes, while its inverse class lies in the second. As we observed in Section~\ref{sec-xi}, the sequence of sum indecomposable permutations in $\bigoplus\Sub(U^o)$ is also $1,1,2,3,4,4,5,4,5,4,\dots$, so we immediately obtain the following result which we use in the next section.

\begin{proposition}
\label{prop-basis-sumU}
The basis of the class $\bigoplus\Sub(U^o)$ consists of $321$, $3412$, $4123$, $23451$, and $314625$.
\end{proposition}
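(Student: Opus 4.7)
The plan is to prove that $\bigoplus\Sub(U^o) = \Av(321, 3412, 4123, 23451, 314625)$; once this identity is established, a direct verification that the five listed permutations are pairwise incomparable under the containment order will complete the proof, since the basis of any class is the unique antichain of minimal forbidden elements.

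The key input from earlier in the paper is that both classes are sum closed and have the same sum indecomposable sequence $1,1,2,3,4,4,5,4,5,4,\dots$. The count for $\bigoplus\Sub(U^o)$ follows from Proposition~\ref{prop-U-sum-indecomp-kids} by taking the union over all $\mu_{2k+1}\in U^o$: the four types shown in Figure~\ref{fig-mu11-kids} account for four sum indecomposable subpermutations at each length $n\geq 5$, and $\mu_n$ itself contributes an additional one at each odd length $n\geq 7$. The count for $\Av(321,3412,4123,23451,314625)$ was already computed via the insertion encoding in the proof of Proposition~\ref{prop-no-112344-5-even}. Since a sum closed class is determined by its sum indecomposable members (Proposition~\ref{prop-enum-oplus-closure}), it therefore suffices to prove the containment $\bigoplus\Sub(U^o) \subseteq \Av(321,3412,4123,23451,314625)$: matching sum indecomposable counts combined with a class containment will force the two classes to have the same sum indecomposable members, and hence to be equal.

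For the containment, observe that a sum indecomposable permutation contained in a sum $\pi_1\oplus\cdots\oplus\pi_k$ must lie in a single summand, so every sum indecomposable member of $\bigoplus\Sub(U^o)$ is a subpermutation of some (sum indecomposable) $\mu_{2k+1}\in U^o$. It thus suffices to show each of the five listed permutations avoids every $\mu_{2k+1}$. I will use the fact that containment of permutations implies induced-subgraph containment of inversion graphs. Because $G_{\mu_{2k+1}}$ is a split-end path, and hence a tree of maximum degree $3$, the graphs $G_{321}=K_3$, $G_{3412}=C_4$, and $G_{23451}=K_{1,4}$ cannot occur as induced subgraphs. For $G_{314625}$, which is a ``Y''-shape with three arms of lengths $1$, $2$, $2$ emanating from a single degree-$3$ vertex, one checks that the only degree-$3$ vertices of a split-end path are its two hubs; each hub has two arms of length $1$ (the end-leaves) and only one arm of length $\geq 2$ (into the spine), so arm-lengths $1,2,2$ cannot be simultaneously realized.

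The main obstacle is the case of $4123$, since the mapping from permutations to inversion graphs is not injective and $G_{4123}=K_{1,3}$ is in fact induced in every $G_{\mu_{2k+1}}$: there are precisely two such claws, one centered at each ``conflict'' vertex adjacent to an inflated anchor. A short computation using the inflation recipe from Section~\ref{sec-xi} (compare Figure~\ref{fig-infinite-antichain}) shows that in both claws the conflict entry has the smallest value of the four and occupies the rightmost position, while the other three entries are increasing in both position and value; the induced pattern is therefore $2341$, not $4123$. With this, all five avoidance claims are verified, and combined with the sum indecomposable count matching, the equality of classes follows.
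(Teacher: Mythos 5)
Your proof is correct and follows essentially the same route as the paper: both classes are sum closed, the containment $\bigoplus\Sub(U^o)\subseteq\Av(321,3412,4123,23451,314625)$ is established, and the matching sum-indecomposable sequences $1,1,2,3,4,4,5,4,5,4,\dots$ (from Section~\ref{sec-xi} and the insertion-encoding computation) then force equality. The paper establishes the containment simply ``by direct observation''; your inversion-graph argument (tree structure rules out $321$, $3412$, $23451$, $314625$, and the two induced claws in each $G_{\mu_{2k+1}}$ realize $2341$ rather than $4123$) is a valid way to make that observation explicit.
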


From inspection, we see that every sum indecomposable permutation of length at least $4$ in this class (or its inverse) has at least $2$ sum indecomposable children. Thus no subclass of $\bigoplus\Sub(U^o)$ may have its sequence of sum indecomposable members end with $1,1$, proving Proposition~\ref{prop-no-112344-5-11}.

\section{Realizing the Remaining Sequences}
\label{sec-realizing}

We now show that all remaining legal sequences can be realized, which we do with two constructions. The first of these consists of four chains of sum indecomposable permutations together with $5$ sporadic sum indecomposable permutations. Specifically, the four chains are of the following forms:
\begin{itemize}
	\item Type $1$: $12\cdots (n-1)\ominus 1$ for $n\ge 1$,
	\item Type $2$: $(21\oplus 12\cdots (n-3))\ominus 1$ for $n\ge 3$,
	\item Type $3$: $(1\oplus 21\oplus 12\cdots (n-4))\ominus 1$ for $n\ge 4$, and
	\item Type $4$: $(12\oplus 21\oplus 12\cdots (n-5))\ominus 1$ for $n\ge 5$.
\end{itemize}
The sporadic elements are $312$, $3421$, $4321$, $(21\oplus 21)\ominus 1=32541$, and $(123\oplus 21)\ominus 1=234651$.

\begin{figure}
\begin{center}
	\begin{footnotesize}
	\begin{tikzpicture}[xscale=3, yscale=0.9]
		\node at (0,8.5) {\normalsize Type 1};
		\node at (1,8.5) {\normalsize Type 2};
		\node at (2,8.5) {\normalsize Type 3};
		\node at (3,8.5) {\normalsize Type 4};
		\node (01) at (0,1) {$1$};
		\node (02) at (0,2) {$21$};
		\node (03) at (0,3) {$12\ominus 1$};
		\node (04) at (0,4) {$123\ominus 1$};
		\node (05) at (0,5) {$1234\ominus 1$};
		\node (06) at (0,6) {$12345\ominus 1$};
		\node (07) at (0,7) {$123456\ominus 1$};
		\node (08) at (0,8) {$\vdots$};
		\draw (01)--(02)--(03)--(04)--(05)--(06)--(07)--(08);
		\node (13) at (1,3) {$21\ominus 1$};
		\node (14) at (1,4) {$(21\oplus 1)\ominus 1$};
		\node (15) at (1,5) {$(21\oplus 12)\ominus 1$};
		\node (16) at (1,6) {$(21\oplus 123)\ominus 1$};
		\node (17) at (1,7) {$(21\oplus 1234)\ominus 1$};
		\node (18) at (1,8) {$\vdots$};
		\draw (02)--(13)--(14)--(15)--(16)--(17)--(18);
		\draw (03)--(14);
		\draw (04)--(15);
		\draw (05)--(16);
		\draw (06)--(17);
		\draw (07)--(18);
		\node (312) at (2,3) {$312$};
		\draw (02)--(312);
		\node (24) at (2,4) {$(1\oplus 21)\ominus 1$};
		\node (25) at (2,5) {$(1\oplus 21\oplus 1)\ominus 1$};
		\node (26) at (2,6) {$(1\oplus 21\oplus 12)\ominus 1$};
		\node (27) at (2,7) {$(1\oplus 21\oplus 123)\ominus 1$};
		\node (28) at (2,8) {$\vdots$};
		\draw (24)--(25)--(26)--(27)--(28);
		\draw (28)--(07);
		\draw (27)--(06);
		\draw (26)--(05);
		\draw (25)--(04);
		\draw (24)--(03);
		\draw (28)--(17);
		\draw (27)--(16);
		\draw (26)--(15);
		\draw (25)--(14);
		\draw (24)--(13);
		\node (3421) at (3,4) {$3421$};
		\draw (3421)--(03);
		\draw (3421)--(13);
		\node (4321) at (4,4) {$4321$};
		\draw (4321)--(13);
		\node (35) at (3,5) {$(12\oplus 21)\ominus 1$};
		\node (36) at (3,6) {$(12\oplus 21\oplus 1)\ominus 1$};
		\node (37) at (3,7) {$(12\oplus 21\oplus 12)\ominus 1$};
		\node (38) at (3,8) {$\vdots$};
		\draw (35)--(36)--(37)--(38);
		\draw (38)--(27);
		\draw (38)--(07);
		\draw (37)--(26);
		\draw (37)--(06);
		\draw (36)--(25);
		\draw (36)--(05);
		\draw (35)--(24);
		\draw (35)--(04);
		\node (45) at (4,5) {$(21\oplus 21)\ominus 1$};
		\draw (45)--(14);
		\draw (45)--(24);
		\node (46) at (4,6) {$(123\oplus 21)\ominus 1$};
		\draw (46)--(35);
	\end{tikzpicture}
	\end{footnotesize}
\end{center}
\caption{A set of sum indecomposable permutations that can be used to realize any legal sequence dominated by $1,1,3,5,5,5,4^\infty$.}
\label{fig-master-construction}
\end{figure}
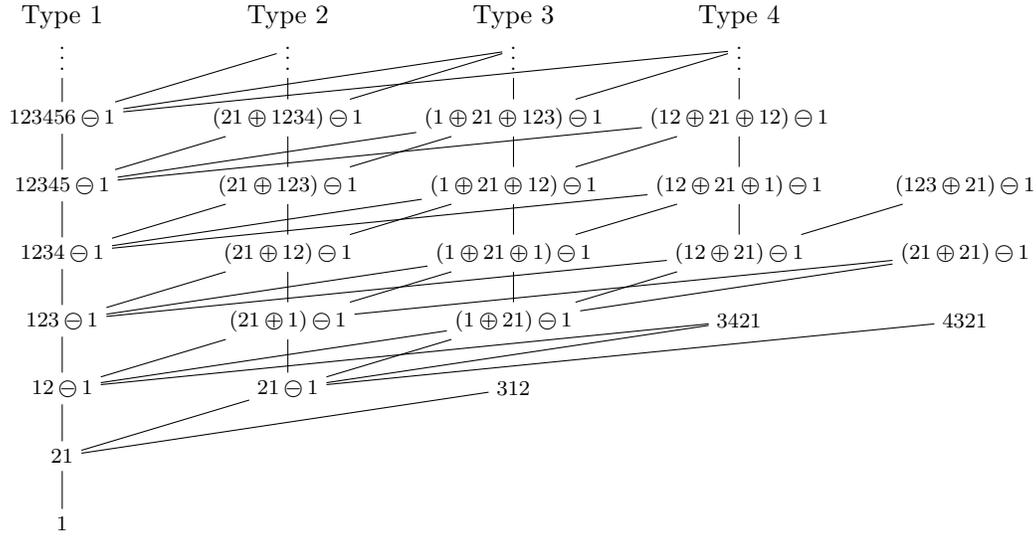

The Hasse diagram displaying the containment relations between these permutations is shown in Figure~\ref{fig-master-construction}. Importantly, all containment relations in this diagram run up and to the right. In particular, elements of type $j$ only contain elements of type $i$ for $i\le j$. Therefore any legal sequence $(s_n)$ dominated by $1,1,3,5,5,5,4^\infty$ can be realized by taking the sum indecomposable permutations of length $n$ to be the leftmost $s_n$ permutations on level $n$ of Figure~\ref{fig-master-construction}.

Moreover, we can show that these sequences are realized by finitely based classes as follows. It can be computed that the sum indecomposable permutations in this construction are all contained in the sum closed class whose finite basis consists of $2413$, $3142$, $3412$, $4123$, $4132$, $4213$, $4231$, $4312$, $24531$, $25431$, $34251$, $34521$, $35421$, $43251$, $43521$, $45321$, $54321$, $243651$, $324651$, $325461$, $2345761$, and $2346571$. Using the insertion encoding, it can then be verified that the sum indecomposable permutations in this class have the same enumeration as those of our construction, and thus this is the basis for the sum closure of all permutations used in the construction. The four types of sum indecomposable permutations in this class form chains, and thus we need only add to this basis the shortest excluded permutation of each type. This yields the following result.

\begin{proposition}
\label{prop-realize-legal}
All legal sequences of sum indecomposable permutations that are dominated by $1,1,3,5,5,5,4^\infty$ are realizable. Moreover, all of these sequences are realized by finitely based classes.
\end{proposition}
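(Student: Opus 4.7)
The plan is to take Figure~\ref{fig-master-construction} at face value and show it does all the work. First I would establish the structural claim underlying the diagram: the containment relations among the listed permutations are exactly those drawn, with all edges running up and to the right, and for each $n$ the leftmost $s_n$ permutations at level $n$ form an initial segment that is closed under containment among the displayed permutations. The key monotonicity is that a type-$j$ permutation at level $n$ never contains a type-$i$ permutation for $i>j$ nor a sporadic element positioned to its right. This follows from direct inspection of the explicit closed forms --- for example, a type-$1$ permutation ends with a single descent, a type-$2$ permutation has a leading $21$-block but no isolated ascent, and so on --- so that the positions of the forced sum components rule out the larger-type patterns. The sporadic entries $312$, $3421$, $4321$, $32541$, $234651$ must each be checked against the types one at a time.

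Given this, realizing a legal sequence $(s_n)\preceq (1,1,3,5,5,5,4^\infty)$ is immediate. Let $S$ be the union over $n$ of the leftmost $s_n$ permutations at level $n$ of Figure~\ref{fig-master-construction}, and set $\mathcal{C}=\bigoplus\Sub(S)$. Since $S$ is a downset among the displayed permutations, and since every displayed permutation of length $m<n$ sits below some displayed permutation of length $n$ as long as $s_n$ has not yet tapered past it, the sum indecomposable members of $\mathcal{C}$ of length $n$ are exactly the $s_n$ selected permutations. Here I would use legality of $(s_n)$ (from Section~\ref{sec-narrowing-1}) to verify that wherever the sequence drops below $4$, $3$, $2$, or $1$, the retained permutations at subsequent levels are exactly the left-most survivors in each type-chain, matching the column structure of the diagram.

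For the finite basis statement, I would proceed as the text outlines. Let $\mathcal{M}$ be the sum closed class defined by the explicit $22$-element basis listed in the excerpt. An insertion encoding computation (justified by Theorem~\ref{thm-insertion-encoding}, since $\mathcal{M}$ cannot contain arbitrarily long vertical alternations) verifies that the sum indecomposable enumeration of $\mathcal{M}$ is exactly $1,1,3,5,5,5,4,4,4,\dots$, and hence $\mathcal{M}$ is the sum closure of the permutations of Figure~\ref{fig-master-construction}. Because the sum indecomposable permutations of $\mathcal{M}$ split into four chains (by type) plus finitely many sporadic elements, any legal sequence $(s_n)\preceq (1,1,3,5,5,5,4^\infty)$ can be obtained from $\mathcal{M}$ by adding at most four additional basis elements, namely the shortest excluded permutation in each of the four type-chains (each sporadic element is excluded by forbidding a short enough permutation of its own type or below). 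This yields a finite basis.

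The routine but non-trivial part will be certifying the structural claim in the first paragraph --- in particular, ruling out cross-type containments not drawn in the figure --- and verifying via the insertion encoding that the claimed $22$-element basis really does give $\mathcal{M}$. The former is a finite case analysis among pairs of types; the latter is a computer calculation of the same flavor as those powering Propositions~\ref{prop-no-1123-5}--\ref{prop-no-112344-5-11}.
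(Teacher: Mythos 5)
Your proposal follows the paper's proof essentially line by line: the Figure~\ref{fig-master-construction} construction with four type-chains and five sporadic permutations, the observation that all containments run up and to the right in the Hasse diagram, the realization via the leftmost $s_n$ permutations at each level, and the insertion-encoding computation certifying that the sum closed class with the displayed $22$-element basis has the sum indecomposable enumeration $1,1,3,5,5,5,4^\infty$, from which finite bases of the subclasses are extracted. You are more explicit than the paper about where legality is invoked (to ensure the ``leftmost $s_n$'' sets are downward closed once the sequence tapers) and about why Theorem~\ref{thm-insertion-encoding} applies, which is fine padding rather than a new idea.

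One small imprecision worth flagging: your parenthetical claim that ``each sporadic element is excluded by forbidding a short enough permutation of its own type or below'' is not correct. The sporadic permutations $312$, $3421$, $4321$, $32541$, and $234651$ are all \emph{maximal} among the displayed permutations --- for instance, $312$ is not contained in the type-$3$ chain element $2431$ (nor in anything else in the figure) --- so excluding a chain member never excludes a sporadic. When a legal sequence drops below the column of a sporadic, that sporadic must itself be added as a basis element. This is also glossed over in the paper's own phrasing (``we need only add to this basis the shortest excluded permutation of each type''), and it does not affect the conclusion, since the sporadics contribute at most five additional basis elements and the class is still finitely based. But the mechanism is direct exclusion of the sporadics, not inheritance from the chain exclusions.
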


While the construction used to prove Proposition~\ref{prop-realize-legal} might seem arbitrary, it is far from it. Up to symmetry there is only one class whose sequence of sum indecomposable permutations begins $1,1,2,3,5$. That class is $\Av(312,4321,3412)$, and its sum indecomposable permutations are all of the form $(\bigoplus\{1,21\})\ominus 1$. Because we need to realize sequences that begin with $1,1,2,3,5$, our choice of the four chains was essentially forced.

Our second construction uses subclasses of $\bigoplus\Sub(U^o)$.

\begin{proposition}
\label{prop-realize-long-legal}
All legal sequences of sum indecomposable permutations that are dominated by $1,1,2,3,4^{2i},5,4^\infty$ and do not end in $1, 1$ are realizable. Moreover, all of these sequences that lead to a growth rate of less than $\xi$ are realized by finitely based classes.
\end{proposition}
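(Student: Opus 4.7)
The plan is to realize each such legal sequence $(s_n)$ as the sum indecomposable sequence of a subclass of $\bigoplus\Sub(U^o)$. I would begin by cataloguing the sum indecomposable permutations of $\bigoplus\Sub(U^o)$: besides the short ones of length at most $4$, at each length $n \ge 5$ there are four ``generic'' types --- the head $H_n$, tail $T_n$, primary body $P_n$, and non-primary body $Q_n$ of Figure~\ref{fig-mu11-kids} --- together with $\mu_n$ at each odd $n \ge 7$. Using the fact that $\mu_n$'s inversion graph is a split-end path, and applying Proposition~\ref{prop-child-mono} to identify monotone intervals, I would determine the sum indecomposable children of each generic type: $\mu_n$ has children $\{H_{n-1}, T_{n-1}\}$; $H_n$ has children $\{P_{n-1}, H_{n-1}\}$; $T_n$ has children $\{P_{n-1}, T_{n-1}\}$; and each of $P_n$ and $Q_n$ has children $\{P_{n-1}, Q_{n-1}\}$.

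Given these relations, to realize $(s_n)$ I would choose at each length $n \ge 5$ a subset $S_n \subseteq \{H_n, T_n, P_n, Q_n\}$ of size $s_n$ (adjoining $\mu_{2i+5}$ if $s_{2i+5} = 5$) so that the sum indecomposable children of every element of $S_n$ lie in $S_{n-1}$. Natural choices are $S_n = \{H_n, T_n, P_n, Q_n\}$ when $s_n = 4$; $S_n = \{H_n, P_n, Q_n\}$ when $s_n = 3$, a ``self-sustaining'' triple whose children at length $n-1$ reproduce the same pattern; $S_n = \{P_n, Q_n\}$ when $s_n = 2$; and $S_n = \{P_n\}$ when $s_n = 1$ with $s_{n+1} = 0$. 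A case analysis confirms these assignments are consistent whenever $(s_n)$ is legal and does not end in $1, 1$ --- the latter hypothesis being exactly what is required, since every element of $S_n$ for $n \ge 5$ has two sum indecomposable children, forcing $|S_{n-1}| \ge 2$ whenever $|S_n| \ge 1$. Setting $\C = \bigoplus\Sub\bigl(\bigcup_n S_n \cup \{\mu_{2i+5} : s_{2i+5} = 5\}\bigr)$ then realizes $(s_n)$.

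For the finite basis claim, note that $\C$ is a subclass of $\bigoplus\Sub(U^o)$, whose finite basis is described in Proposition~\ref{prop-basis-sumU}. The additional basis elements of $\C$ are the minimal sum indecomposable members of $\bigoplus\Sub(U^o) \setminus \C$. When the growth rate of $(s_n)$ is strictly less than $\xi$, a generating function comparison analogous to Proposition~\ref{prop-stupid-analysis} shows that $(s_n)$ must eventually drop strictly below $4$; after this drop, the children structure ensures that once a type ($H$, $T$, $P$, or $Q$) is excluded at some length, every larger member of that type contains the minimally excluded one as a proper subperm, so only finitely many ``first excluded'' permutations of each type need be added to the basis. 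The main obstacle is the first paragraph: verifying the precise sum indecomposable children of each of the four generic types requires several direct computations with the split-end path and its monotone quotients. Once that structure is in hand, the case analysis producing the $S_n$ assignment and the finite basis argument are comparatively routine.
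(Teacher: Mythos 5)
Your construction is essentially the same as the paper's: realize the sequence inside $\bigoplus\Sub(U^o)$ using the head/tail/primary-body/non-primary-body families, together with a single antichain element $\mu_{2i+5}$ when the $5$ is present, and drop tails first, then heads, then one of the increasing-oscillation bodies, exactly as you propose with $\{H_n,T_n,P_n,Q_n\} \supset \{H_n,P_n,Q_n\}\supset\{P_n,Q_n\}\supset\{P_n\}$. Your more explicit bookkeeping of the sum indecomposable children of each of the four generic types is a sound way to verify the ``downward closed'' consistency that the paper simply asserts, and your identification of the no-``$1,1$''-ending hypothesis with the fact that every generic type at length $\ge 5$ has two sum indecomposable children is the right reason the hypothesis is needed.

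There is, however, a gap in your finite-basis argument that you should close. You argue that once a type among $H,T,P,Q$ is excluded at some length, all longer members of that type are excluded by containment, so only finitely many ``first excluded'' permutations of each type enter the basis. But $\bigoplus\Sub(U^o)\setminus\C$ also contains the members of the infinite antichain $U^o$ itself (every $\mu_n$ with $n\neq 2i+5$), and as an antichain these are \emph{not} eliminated by each other, so \emph{a priori} they could force an infinite basis. The missing observation (which the paper makes explicitly) is that $\mu_n$ contains the tail $T_{n-1}$ as a sum indecomposable subpermutation; hence once some $T_N$ is excluded, every $\mu_n$ with $n>N$ is no longer a minimal excluded element and need not be in the basis, leaving only the finitely many $\mu_n$ with $n\le N$ and $n\ne 2i+5$. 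Since the sequence drops below $4$ when the growth rate is less than $\xi$ (which forces a tail to be excluded at some finite length), this makes the argument go through. With that one clause added, your proof matches the paper's in substance.
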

\begin{proof}
Let $(s_n)$ be a sequence of this form. If $(s_n)$ does not contain a $5$ then it is realizable by Proposition~\ref{prop-realize-legal}. Thus (by the results of Section~\ref{sec-taper}) we may assume that $(s_n)$ begins $1,1,2,3,4^{2i},5$, and ends with zero or more $4$s, followed by zero or more $3$s, followed by zero or more $2$s, and then possibly a single $1$ (though any one of these portions of $(s_n)$ other than the $1$s might be infinite). We realize such a sequence by taking the sum indecomposable members $\PropSub(U^o)$ of lengths up to $2i+5$ (giving a sequence beginning with $1,1,2,3,4^{2i+1}$) together with $\mu_{2i+5}$ itself, which gives us $5$ sum indecomposable permutations of length $2i+5$.

Later $4$s in $(s_n)$ can be realized with members of $\PropSub(U^o)$, later $3$s with the two increasing oscillations of each length together with heads of members of $U^o$ (increasing oscillations of primary type with their first entries inflated), and later $2$s simply with the two increasing oscillations of each length. As we have assumed that $(s_n)$ does not end in $1,1$, it can end in at most one $1$, and for this sum indecomposable permutation we can take either increasing oscillation.

It remains to show that the sequences of this form that lead to growth rates under $\xi$ are realized by finitely based classes. First recall that $\xi$ is the growth rate of the sum closed class whose sequence of sum indecomposable permutations is $1,1,2,3,4^\infty$. Therefore every sequence of the form $1,1, 2,3,4^{2i},5,4^\infty$ leads to a growth rate greater than $\xi$. The sequences leading to growth rates less than $\xi$ therefore either do not contain a $5$ (and thus are included in Proposition~\ref{prop-realize-legal}) or do not end in $4^\infty$.

By Proposition~\ref{prop-basis-sumU} we have
\[
	\bigoplus\Sub(U^o)
	=
	\Av(321, 3412, 4123, 23451, 314625).
\]
Therefore the basis of $\bigoplus\PropSub(U^o)$ consists of the five basis elements of $\bigoplus\Sub(U^o)$ together with $U^o$ itself. We are only interested in sequences that include a term less than $4$ after their $5$, so our construction above instructs us to omit all sufficiently long tails (increasing oscillations with their greatest entries inflated). This can be imposed by a single additional restriction. Moreover, once this tail is omitted, we do not need the members of $U^o$ that are longer than this tail in our basis. From that point, depending on how the sequence tapers, we need add at most three additional basis elements---a head, and one or two of the increasing oscillations---establishing that these classes all have finite bases.
\end{proof}

Again, the construction used to prove Proposition~\ref{prop-realize-long-legal} is far from arbitrary. The proofs of Propositions~\ref{prop-no-112344-5-even} and \ref{prop-no-112344-5-11} show that our only choice is whether to use the antichain $U^o$ or its inverse, which permutation to use in addition to increasing oscillations to realize any $3$s in the sequence (i.e., to choose between heads and tails), and then which of the two increasing oscillations to use to realize the final $1$ of a sequence, if it ends with a $1$.

\begin{table}
\begin{footnotesize}
\[
	\begin{array}{lllr}
	\hline
	\mbox{sequence}&\mbox{restriction}&\mbox{growth rate is the greatest real root of}&\mbox{growth rate}
	\\\hline
	\\[-8pt]
		1, 1, 3, 3, 1^i
		&
		i\le 5
		&
		x^i (x^5 - 2x^4 - 2x^2 + 2) + 1
		&
		\lesssim 2.30503
	\\
		1, 1, 3, 2^\infty
		&
		&
		x^{4} - 2 x^{3} - 2 x + 1
		&
		\approx 2.29663
	\\
		1, 1, 3, 2^i, 1^\infty
		&
		&
		x^{i} \left(x^{4} - 2 x^{3} - 2 x + 1\right) + 1
		&
		\to 2.29663
	\\
		1, 1, 3, 2^i, 1^j
		&
		&
		x^{i + j} \left(x^{4} - 2 x^{3} - 2 x + 1\right) + x^j + 1
		&
		\to 2.29663
	\\
		1, 1, 2, 5, 2, 1
		&
		&
		x^{6} - x^{5} - x^{4} - 2 x^{3} - 5 x^{2} - 2 x - 1
		&
		\approx 2.30490
	\\
		1, 1, 2, 5, 2
		&
		&
		x^{5} - x^{4} - x^{3} - 2 x^{2} - 5 x - 2
		&
		\approx 2.29783
	\\
		1, 1, 2, 5, 1^\infty
		&
		&
		x^{5} - 2 x^{4} - x^{2} - 3 x + 4
		&
		\approx 2.29408
	\\
		1, 1, 2, 5, 1^i
		&
		&
		x^{i} \left(x^{5} - 2 x^{4} - x^{2} - 3 x + 4\right)+1
		&
		\to 2.29408
	\\
		1,1,2,4,4,1^i
		&
		i\le 5
		&
		x^i(x^6 - 2x^5 - x^3 - 2x^2 + 3) + 1
		&
		\lesssim 2.30515
	\\
		1,1,2,4,3,3,2
		&
		&
		x^{7} - x^{6} - x^{5} - 2 x^{4} - 4 x^{3} - 3 x^{2} - 3 x - 2
		&
		\approx 2.30394
	\\
		1,1,2,4,3,3,1^\infty
		&
		&
		x^{7} - 2 x^{6} - x^{4} - 2 x^{3} + x^{2} + 2
		&
		\approx 2.30326
	\\
		1,1,2,4,3, 3, 1^i
		&
		&
		x^{i} \left(x^{7} - 2 x^{6} - x^{4} - 2 x^{3} + x^{2} + 2\right)+1
		&
		\to 2.30326
	\\
		1,1,2,4,3,2^\infty
		&
		&
		x^{6} - 2 x^{5} - x^{3} - 2 x^{2} + x + 1
		&
		\approx 2.30167
	\\
		1,1,2,4,3,2^i,1^\infty
		&
		&
		x^{i} \left(x^{6} - 2 x^{5} - x^{3} - 2 x^{2} + x + 1\right)+1
		&
		\to 2.30167
	\\
		1,1,2,4,3,2^i,1^j
		&
		&
		x^{i + j} \left(x^{6} - 2 x^{5} - x^{3} - 2 x^{2} + x + 1\right)+x^{j}+1
		&
		\to 2.30167
	\\
		1,1,2,4,2^\infty
		&
		&
		x^{5} - 2 x^{4} - x^{2} - 2 x + 2
		&
		\approx 2.28563
	\\
		1,1,2,4,2^i,1^\infty
		&
		&
		x^{i} \left(x^{5} - 2 x^{4} - x^{2} - 2 x + 2\right)+1
		&
		\to 2.28563
	\\
		1,1,2,4,2^i,1^j
		&
		&
		x^{i + j} \left(x^{5} - 2 x^{4} - x^{2} - 2 x + 2\right)+x^{j}+1
		&
		\to 2.28563
	\\
		1,1,2^\infty
		&
		&
		x^{3} - 2 x^{2} - 1
		&
		\approx 2.20557
	\\
		1,1,2^i,1^\infty
		&
		&
		x^{i} \left(x^{3} - 2 x^{2} - 1\right)+1
		&
		\to 2.20557
	\\
		1,1,2^i,1^j
		&
		&
		x^{i + j} \left(x^{3} - 2 x^{2} - 1\right)+x^{j}+1
		&
		\to 2.20557
	\\
		1^\infty
		&
		&
		x-2
		&
		= 2\phantom{.00000}
	\\
		1^i
		&
		&
		x^i(x-2) + 1
		&
		\to 2\phantom{.00000}
	\\[1pt]\hline
	\end{array}
\]
\end{footnotesize}
\caption{Legal realizable sequences dominated by a sequence in Table~\ref{table-short-bad} leading to growth rates under $\xi$.}
\label{table-short-good-realizable}
\end{table}

We are now ready to prove our main result.

\begin{theorem}
\label{thm-xi-grs-main}
The set of growth rates of permutation classes below $\xi$ can be characterized by a finite set of infinite families of algebraic numbers, collected in Tables~\ref{table-short-good-realizable} and \ref{table-long-good-realizable}.
\end{theorem}
\begin{proof}
Taking into account the $\preceq$-minimal sequences that lead to growth rates equal to or greater than $\xi$ from Tables~\ref{table-short-bad} and \ref{table-long-bad}, we can describe the complete list of growth rates of permutation classes smaller than $\xi$. First, Table~\ref{table-short-good-realizable} lists all realizable sequences that are dominated by one of the short sequences from Table~\ref{table-short-bad}; by Proposition~\ref{prop-realize-legal} all of the candidate sequences are realizable except $1,1,2,6$ (as established in Section~\ref{sec-narrowing-1}).

Not all sequences dominated by one of the long sequences in Table~\ref{table-long-bad} are realizable. All that begin with $1,1,2,3,4^i,5$ and end with $1,1$ (or $1^j$ or $1^\infty$) can be realized for $i=1$ by Proposition~\ref{prop-realize-legal}, but cannot be realized for $i\ge 2$ by Proposition~\ref{prop-no-112344-5-11}. In addition, Proposition~\ref{prop-no-112344-5-even} shows that sequences beginning $1,1,2,3,4^i,5$ for $i\ge 2$ are realizable only if $i$ is even.

All other sequences that are dominated by one of the long sequences in Table~\ref{table-long-bad} can be realized by Proposition~\ref{prop-realize-long-legal}. This produces the list of realizable legal sequences in Table~\ref{table-long-good-realizable}, and completes the list of all growth rates of permutation classes at most $\xi$. 
\end{proof}

\begin{table}
\begin{footnotesize}
\[
	\begin{array}{lll}
	\hline
	\mbox{sequence}&\mbox{restriction}&\mbox{growth rate is the greatest real root of}
	\\\hline\\[-8pt]
		1,1,2,3,4^i,5,4,1^j
		&
		i\le 1, \, j\le 5
		&
		x^{i + j + 2} \left(x^{5} - 2 x^{4} - x^{2} - x - 1\right) - x^{j}\left(x^{2} - x - 3\right)+1
	\\
		1,1,2,3,4^i,5,4,1^j
		&
		i\text{ even}, \, j\le 1
		&
		x^{i + j + 2} \left(x^{5} - 2 x^{4} - x^{2} - x - 1\right) - x^{j} \left(x^{2} - x - 3\right)+1
	\\
		1,1,2,3,4^i,5,3,3,2
		&
		i = 1 \text{ or }i\text{ even}
		&
		x^{i+4} \left(x^{5} - 2 x^{4} - x^{2} - x - 1\right) - x^{4} + 2 x^{3} + x + 2
	\\
		1,1,2,3,4^i,5,3,3,1^\infty
		&
		i\le 1
		&
		x^{i+3} \left(x^{5} - 2 x^{4} - x^{2} - x - 1\right) - x^{3} + 2 x^{2} + 2
	\\
		1,1,2,3,4^i,5,3,3,1^j
		&
		i\le 1
		&
		x^{i + j+ 3} \left(x^{5} - 2 x^{4} - x^{2} - x - 1\right) - x^{j} \left(x^{3} - 2 x^{2} - 2\right)+1
	\\
		1,1,2,3,4^i,5,3,3,1^j
		&
		i\text{ even}, \, j\le 1
		&
		x^{i + j+ 3} \left(x^{5} - 2 x^{4} - x^{2} - x - 1\right) - x^{j} \left(x^{3} - 2 x^{2} - 2\right)+1

	\\
		1,1,2,3,4^i,5,3^j,2^\infty
		&
		i = 1 \text{ or }i\text{ even}, \, j \leq 1
		&
		x^{i + j + 1} \left(x^{5} - 2 x^{4} - x^{2} - x - 1\right) - x^{j} \left(x - 2\right)+1
	\\
		1,1,2,3,4^i,5,3^j,2^k,1^\infty
		&
		i\le 1, \, j \leq 1
		&
		x^{i + j + k + 1} \left(x^{5} - 2 x^{4} - x^{2} - x - 1\right) - x^{j + k} \left(x - 2\right)+x^{k}+1
	\\
		1,1,2,3,4^i,5,3^j,2^k,1^\ell
		&
		i\le 1, \, j \leq 1
		&
		x^{i + j + k + \ell + 1} \left(x^{5} - 2 x^{4} - x^{2} - x - 1\right)
	\\
		&&\quad
			\phantom{.}- x^{j + k + \ell} \left(x - 2\right)+x^{k + \ell}+x^{\ell}+1
	\\
		1,1,2,3,4^i,5,3^j,2^k,1^\ell
		&
		i\text{ even}, \, j \leq 1, \, \ell \leq 1
		&
		x^{i + j + k + \ell + 1} \left(x^{5} - 2 x^{4} - x^{2} - x - 1\right)
	\\
		&&\quad
			\phantom{.}- x^{j + k + \ell} \left(x - 2\right)+x^{k + \ell}+x^{\ell}+1
	\\
		1,1,2,3,4^i,3^\infty
		&
		&
		x^{i} \left(x^{5} - 2 x^{4} - x^{2} - x - 1\right)+1
	\\
		1,1,2,3,4^i,3^j,2^\infty
		&
		&
		x^{i + j} \left(x^{5} - 2 x^{4} - x^{2} - x - 1\right)+x^{j}+1
	\\
		1,1,2,3,4^i,3^j,2^k,1^\infty
		&
		&
		x^{i + j + k} \left(x^{5} - 2 x^{4} - x^{2} - x - 1\right)+x^{j + k}+x^{k}+1
	\\
		1,1,2,3,4^i,3^j,2^k,1^\ell
		&
		&
		x^{i + j + k + \ell} \left(x^{5} - 2 x^{4} - x^{2} - x - 1\right)+x^{j + k + \ell}+x^{k + \ell}+x^{\ell}+1
	\\[1pt]\hline
	\end{array}
\]
\end{footnotesize}
\caption{More legal sequences. Variables that are not specified are allowed to be arbitrary nonnegative integers. The sequence of growth rates in each row of this table for which $i$ is not bounded converge to $\xi$ as $i\rightarrow\infty$.}
\label{table-long-good-realizable}
\end{table}

\section{Finite Bases}
\label{sec-fb}

The only major result we have left to establish is the following.

\begin{theorem}
\label{thm-xi-fb}
Every growth rate of a permutation class less than or equal to $\xi$ is achieved by a finitely based class.	
\end{theorem}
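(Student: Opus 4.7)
The plan is to split the proof into two cases: growth rates strictly less than $\xi$, and the boundary value $\xi$ itself. Both cases reduce to invoking constructive realization results already proved.

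For $r<\xi$, I would first apply Theorem~\ref{thm-xi-main} to replace an arbitrary permutation class of growth rate $r$ by a sum closed class of the same growth rate. Since the growth rate of a sum closed class is determined entirely by its sum indecomposable sequence via Proposition~\ref{prop-enum-oplus-closure}, it suffices to produce a finitely based sum closed class matching that sequence $(s_n)$. The classification work of Sections~\ref{sec-narrowing-1}--\ref{sec-gr-xi} shows that every realizable legal sequence with growth rate below $\xi$ is either dominated by $1,1,3,5,5,5,4^\infty$, or dominated by $1,1,2,3,4^{2i},5,4^\infty$ for some $i\ge 1$ while not ending in $1,1$. In the first case Proposition~\ref{prop-realize-legal} provides a finitely based realization, and in the second Proposition~\ref{prop-realize-long-legal} does so, since its finite-basis clause applies precisely to sequences of growth rate less than $\xi$. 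Taken together these settle the subcritical regime.

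For the boundary case $r=\xi$, it is enough to exhibit a single finitely based class of growth rate $\xi$. The canonical class $\bigoplus\PropSub(U^o)$ does have growth rate $\xi$, but by Proposition~\ref{prop-basis-sumU} together with the fact that $U^o$ is an infinite antichain it is not finitely based. Instead I would realize one of the sequences in Table~\ref{table-long-bad} whose growth rate is exactly $\xi$, and the most convenient choice is the finite sequence $1,1,2,3,4,5,3,3,2,1$ (the $i=1$ instance of $1,1,2,3,4^i,5,3,3,2,1$). This sequence is legal and pointwise dominated by $1,1,3,5,5,5,4^\infty$, so Proposition~\ref{prop-realize-legal} produces a finitely based sum closed class $\C$ realizing it. Since the associated denominator polynomial $x^5-2x^4-x^2-x-1$ has greatest positive root $\xi$, the class $\C$ has growth rate exactly $\xi$.

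The main obstacle is really bookkeeping rather than a new mathematical difficulty: one must confirm that the two regimes above genuinely exhaust every growth rate at most $\xi$. This amounts to a sanity check against the classification summarized in Tables~\ref{table-short-good-realizable} and~\ref{table-long-good-realizable}, after which the theorem collapses to a short appeal to the appropriate realization proposition in each case.
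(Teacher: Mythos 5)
Your proposal is correct and follows essentially the same route as the paper: the subcritical case is dispatched by the finite-basis clauses of Propositions~\ref{prop-realize-legal} and~\ref{prop-realize-long-legal}, and the remaining task is to exhibit a single finitely based class of growth rate exactly $\xi$ by realizing a short legal sequence whose associated polynomial is $x^5-2x^4-x^2-x-1$. The only (immaterial) difference is the choice of witness: you realize $1,1,2,3,4,5,3,3,2,1$ abstractly via Proposition~\ref{prop-realize-legal}, whereas the paper realizes the shorter sequence $1,1,2,4,3,3,2,1$ and writes down the explicit basis $\Av(231, 4132, 4213, 54312, 7612345, 81234567, 987654321)$.
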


As our previous work (in particular Propositions~\ref{prop-realize-legal} and \ref{prop-realize-long-legal}) shows that all upper growth rates of permutation classes that are \emph{less than} $\xi$ can be achieved by finitely based classes, it remains only to show that $\xi$ itself can be realized by a finitely based class. In fact, the quintessential class of growth rate $\xi$, $\bigoplus\PropSub(U^o)$, is infinitely based. However, there is also a finitely based class with this growth rate. Indeed, we have already seen its sequence of sum indecomposable members, $1,1,2,4,3,3,2,1$, in the top row of Table~\ref{table-short-bad}. The class itself is
\[
	\Av(231, 4132, 4213, 54312, 7612345, 81234567, 987654321).
\]
This class is sum closed because all of its basis elements are sum indecomposable. To verify that it has the claimed sequence of sum indecomposable permutations it suffices to list all sum indecomposable members of this class up to length $9$. Doing so verifies that the sequence begins $1,1,2,4,3,3,2,1,0$, and because the class contains no sum indecomposable permutations of length $9$, it cannot contain any of a longer length (one could also use the insertion encoding to verify this). A routine computation shows that this class has growth rate $\xi$, finishing the proof of Theorem~\ref{thm-xi-fb}.

As we showed in Section~\ref{sec-xi}, there are uncountably many growth rates in every neighborhood of $\xi$, so almost all of these are not achievable by finitely based classes. Therefore $\xi$ represents the phase transition where the sets of growth rates of all permutation classes and that of finitely based classes first differ.

Our results also imply that $\xi$ is the least accumulation point of growth rates from above. The fact that the set of growth rates contains accumulation points from above---first established by Albert and Linton~\cite{albert:growing-at-a-pe:}---disproved a conjecture of Balough, Bollob\'as, and Morris~\cite{balogh:hereditary-prop:ordgraphs} which stated (in the more general context of hereditary properties of ordered graphs) that the set of growth rates could not have such accumulation points. Klazar~\cite{klazar:overview-of-som:}, who denotes the set of growth rates of permutation classes by $E$, suggested a possible way to revive their conjecture, writing that ``it seems that the refuted conjectures should have been phrased for finitely based downsets.'' In particular, his Problem 2.7 reads
\begin{quote}
	Let $E^\ast$ be the countable subset of $E$ consisting of the growth [rates] of finitely based [permutation classes]. Show that every $\alpha$ in $E^\ast$ is an algebraic number and that for every $\alpha$ in $E^\ast$ there is a $\delta>0$ such that $(\alpha,\alpha+\delta)\cap E^\ast=\emptyset$.	
\end{quote}
While the first part of his problem regarding the algebraicity of growth rates of finitely based permutation classes remains open, we conclude by providing a counterexample to the second part.

Recall from Proposition~\ref{prop-basis-sumU} that
\[
	\bigoplus\Sub(U^o)
	=
	\Av(321, 3412, 4123, 23451, 314625).
\]
Therefore each of the classes $\bigoplus\left( \Sub(U^o) \setminus \{\mu_7, \dots, \mu_{2i+7}\} \right)$ has a finite basis,
\[
	\{321, 3412, 4123, 23451, 314625, \mu_7, \dots, \mu_{2i+7}\}.
\]
Moreover, these classes have generating functions of the form
\[
	\frac{1}{1-\left(x+x^2+2x^3+3x^4+4\frac{x^5}{1-x}+\frac{x^{2i+7}}{1-x^2}\right)}
	=
	\frac{1-x^2}{1-x-2x^2-x^3-2x^4-2x^5-x^6-x^{2i+7}},
\]
and consequently their growth rates are the greatest real roots of the polynomials
\begin{multline*}
	x^{2i+7}-x^{2i+6}-2x^{2i+5}-x^{2i+4}-2x^{2i+3}-2x^{2i+2}-x^{2i+1}-1\\
	=
	(x^5-2x^4-x^2-x-1)(x+1)x^{2i+1}-1.
\end{multline*}
Thus these growth rates are solutions to
\[
	x^5-2x^4-x^2-x-1
	=
	\frac{1}{(x+1)x^{2i+1}},
\]
and it follows that these growth rates accumulate at $\xi$ from above. Combining this with our observation that there is a finitely based class with growth rate $\xi$, we obtain the following.

\begin{proposition}
\label{prop-finitely-based-xi}
There is a sequence of finitely based permutation classes whose growth rates accumulate at $\xi$ from above. Moreover, $\xi$ is itself the growth rate of a finitely based permutation class.
\end{proposition}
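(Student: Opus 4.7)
The plan is to prove both parts of Proposition~\ref{prop-finitely-based-xi} by direct construction, leveraging the explicit finite basis of $\bigoplus\Sub(U^o)$ established in Proposition~\ref{prop-basis-sumU}.

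For the second assertion, I would exhibit a concrete finitely based, sum closed class whose generating function realizes the sequence $1,1,2,4,3,3,2,1$ appearing in the top row of Table~\ref{table-short-bad}. A natural candidate is
\[
	\Av(231,\,4132,\,4213,\,54312,\,7612345,\,81234567,\,987654321).
\]
This class is sum closed because each basis element is sum indecomposable. The key verification is that its sequence of sum indecomposable members is exactly $1,1,2,4,3,3,2,1,0,0,\dots$. Since a permutation class contains a sum indecomposable of length $n$ only if it contains one of length $n-1$ (by the trivial consequence of Proposition~\ref{prop-not-cut} noted in Section~\ref{sec-taper}), checking the enumeration up to length $9$ suffices; the insertion encoding of Theorem~\ref{thm-insertion-encoding} provides an algorithmic means of doing this, since the class plainly avoids arbitrarily long vertical alternations. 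Plugging the resulting generating function into Proposition~\ref{prop-enum-oplus-closure} and applying the Exponential Growth Formula then gives a growth rate equal to the greatest real root of $x^5-2x^4-x^2-x-1$, namely $\xi$.

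For the first assertion, I would work with the infinite family
\[
	\C_i = \bigoplus\bigl(\Sub(U^o)\setminus\{\mu_7,\mu_9,\dots,\mu_{2i+7}\}\bigr).
\]
Proposition~\ref{prop-basis-sumU} together with the antichain property of $U^o$ shows that each $\C_i$ has the finite basis $\{321,3412,4123,23451,314625,\mu_7,\dots,\mu_{2i+7}\}$. Using Proposition~\ref{prop-U-sum-indecomp-kids} to tally how removing each $\mu_{2j+7}$ eliminates exactly one sum indecomposable of length $2j+7$ from $\bigoplus\Sub(U^o)$, the generating function of $\C_i$ computes to
\[
	\frac{1-x^2}{1-x-2x^2-x^3-2x^4-2x^5-x^6-x^{2i+7}},
\]
exactly as in the displayed calculation in the proposition's proof-sketch. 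The Exponential Growth Formula then reduces the growth rate $\gamma_i$ of $\C_i$ to a root of $(x^5-2x^4-x^2-x-1)(x+1)x^{2i+1}-1$, equivalently a solution of $x^5-2x^4-x^2-x-1 = 1/((x+1)x^{2i+1})$.

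The final step is the convergence $\gamma_i\searrow\xi$. Since $f(x)=x^5-2x^4-x^2-x-1$ vanishes at $\xi$ with $f'(\xi)>0$, the function $f$ is strictly increasing and positive just above $\xi$, while the right-hand side $1/((x+1)x^{2i+1})$ is positive and tends to $0$ uniformly on a neighborhood of $\xi$ as $i\to\infty$. Combining these observations with the Intermediate Value Theorem, the (unique nearby) root $\gamma_i$ lies strictly above $\xi$ and tends to $\xi$ as $i\to\infty$; this is essentially the argument codified in Proposition~\ref{prop-stupid-analysis}, and I would invoke it directly. The main obstacle is not analytic but bookkeeping: confirming that the candidate class for the second part truly has the claimed (terminating) sum indecomposable sequence, and confirming that the basis of $\C_i$ is exactly as stated, both of which follow either by computer enumeration or by a careful appeal to Propositions~\ref{prop-basis-sumU} and \ref{prop-U-sum-indecomp-kids}.
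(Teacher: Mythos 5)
Your proof is correct and follows essentially the same route as the paper: the same finitely based class $\Av(231,4132,4213,54312,7612345,81234567,987654321)$ to realize $\xi$ itself, and the same family $\bigoplus\bigl(\Sub(U^o)\setminus\{\mu_7,\dots,\mu_{2i+7}\}\bigr)$ (whose finite bases follow from Proposition~\ref{prop-basis-sumU}) to produce growth rates accumulating at $\xi$ from above, with the convergence handled by the analytic lemma (Proposition~\ref{prop-stupid-analysis}). The only stylistic difference is that you invoke Proposition~\ref{prop-stupid-analysis} explicitly for the convergence step, whereas the paper states it directly; this is immaterial.
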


\section{Concluding Remarks}
\label{sec-conclusion}

In \cite{vatter:growth-rates-of:} it was proved that $\xi\approx 2.30522$ represents the phase transition from countably to uncountably many growth rates of permutation classes and that every growth rate below $\xi$ is achieved by a sum closed class. Here we have used this result to determine the complete list of growth rates below $\xi$, establishing that $\xi$ is the least accumulation point of growth rates from above, and showing that each of these growth rates is achieved by a finitely based class, while there are growth rates arbitrarily close to $\xi$ which cannot be achieved by finitely based classes.

Given that Bevan~\cite{bevan:intervals-of-pe:} has shown that every real number at least $\lambda_B\approx 2.35698$ is the growth rate of a permutation class, it is natural to try to close this gap of approximately $0.05176$. The first challenge would be to extend the results of \cite{vatter:growth-rates-of:} to this range, i.e., to show that every growth rate between $\xi$ and $\lambda_B$ is achieved by a sum closed class. More generally, \cite{vatter:growth-rates-of:} presents a conjecture that every growth rate of a permutation class is achieved by a sum closed class; this is known to be true for growth rates up to $\xi$ by \cite{vatter:growth-rates-of:} and for growth rates between $\lambda_B$ and approximately $3.79$ by \cite{bevan:intervals-of-pe:,vatter:permutation-cla:lambda:}.

Supposing that this conjecture were established, extending the results of this paper up to $\lambda_B$ would still appear to be a difficult task. For a start, determining the realizable sequences of sum indecomposable permutations would require much more nuance than was required in Sections~\ref{sec-taper}--\ref{sec-realizing}. While we only had to use one infinite antichain ($U^o$, in Proposition~\ref{prop-realize-long-legal}) to realize the sequences below $\xi$, one would need to consider several infinite antichains (some of which are presented in the conclusion of \cite{vatter:growth-rates-of:}) to extend this work to $\lambda_B$.

Indeed, even expressing the realizable sequences in this range is more challenging. Recall that when we considered sum closed classes lying between $\bigoplus\PropSub(U^o)$ and $\bigoplus\Sub(U^o)$ in Section~\ref{sec-xi}, we were able to say that these classes had sequences $(s_n)$ of sum indecomposable members for all sequences $(s_n)$ satisfying
\[
	(1,1,2,3,4^\infty)\preceq (s_n)\preceq (1,1,2,3,4,4,5,4,5,4,\dots).
\]

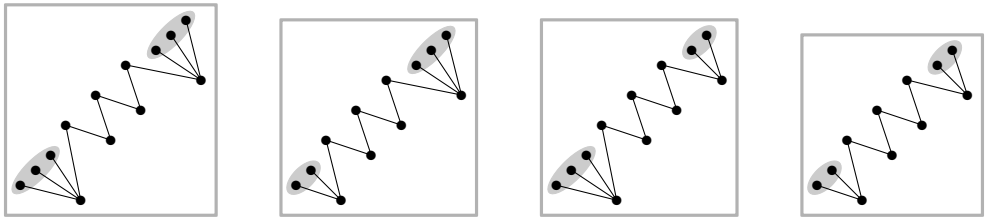
\begin{figure}
\begin{center}
	\begin{tikzpicture}[scale=0.2]
		\draw[lightgray, fill, rotate around={-45:(2,3)}] (2,3) ellipse (20pt and 60pt);
		\draw[lightgray, fill, rotate around={-45:(11,12)}] (11,12) ellipse (20pt and 60pt);
		\plotpermgraph{2,3,4,6,1,8,5,10,7,11,12,13,9};
		\plotpermbox{0.5}{0.5}{13.5}{13.5};
	\end{tikzpicture}
\quad\quad
	\begin{tikzpicture}[scale=0.2]
		\draw[lightgray, fill, rotate around={-45:(1.5,2.5)}] (1.5,2.5) ellipse (20pt and 40pt);
		\draw[lightgray, fill, rotate around={-45:(10,11)}] (10,11) ellipse (20pt and 60pt);
		\plotpermbox{0.5}{0.5}{12.5}{12.5};
		\plotpermgraph{2,3,5,1,7,4,9,6,10,11,12,8};
	\end{tikzpicture}
\quad\quad
	\begin{tikzpicture}[scale=0.2]
		\draw[lightgray, fill, rotate around={-45:(2,3)}] (2,3) ellipse (20pt and 60pt);
		\draw[lightgray, fill, rotate around={-45:(10.5,11.5)}] (10.5,11.5) ellipse (20pt and 40pt);
		\plotpermgraph{2,3,4,6,1,8,5,10,7,11,12,9};
		\plotpermbox{0.5}{0.5}{12.5}{12.5};
	\end{tikzpicture}
\quad\quad
	\begin{tikzpicture}[scale=0.2]
		\draw[lightgray, fill, rotate around={-45:(1.5,2.5)}] (1.5,2.5) ellipse (20pt and 40pt);
		\draw[lightgray, fill, rotate around={-45:(9.5,10.5)}] (9.5,10.5) ellipse (20pt and 40pt);
		\plotpermbox{0.5}{0.5}{11.5}{11.5};
		\plotpermgraph{2,3,5,1,7,4,9,6,10,11,8};
	\end{tikzpicture}
\end{center}
\caption{The permutation on the left is the only member of its antichain (increasing oscillations with both leaves inflated by $123$) that contains the two sum indecomposable children in the center or the sum indecomposable grandchild on the right.}
\label{fig-forced-extra-kids}
\end{figure}

Now consider the antichain formed by inflating both leaves of odd-length increasing oscillations of primary type by $123$. A typical member, of length $13$, in this antichain is shown on the left of Figure~\ref{fig-forced-extra-kids}. This antichain (or a symmetry of it) would have to be considered because the growth rate of the sum closure of its proper downward closure is only approximately $2.34$.

However, the antichain member on the left of Figure~\ref{fig-forced-extra-kids} has two sum indecomposable children of length $12$ and one sum indecomposable grandchild of length $11$ that are not contained in any other member of the antichain. Therefore if we choose to include the antichain member, we must also include these three additional permutations. Of course, we may also choose to take only the grandchild, one child and the grandchild, or both children and the grandchild. Thus we cannot characterize the realizable sequences arising from this antichain in terms of domination of sequences.

Instead, it seems likely one would have to express the realizable sequences as Bevan~\cite{bevan:intervals-of-pe:} did, using the $\beta$ bases of R\'enyi~\cite{renyi:representations:}. To briefly introduce this perspective, note that if the sequence of sum indecomposable permutations in a sum closed class is $(s_n)$ then its growth rate $\gamma$ is the unique real number $\gamma$ such that
\[
	\sum_{n\ge 1} s_n\gamma^{-n}=1,
\]
or in other words, $1$ can be represented in base $\gamma$ by the sequence $(s_n)$ of digits. In the case of sum closed classes lying between $\bigoplus\PropSub(U^o)$ and $\bigoplus\Sub(U^o)$, we are allowed digits $s_{2n}=4$ for $n\ge 3$ and $s_{2n+1}\in\{4,5\}$ for $n\ge 3$. In this viewpoint, we can handle the multitude of choices we have in choosing antichain members, children, and grandchildren in antichains such as that shown in Figure~\ref{fig-forced-extra-kids} by allowing for generalized digits such as $7.21$ (choosing the antichain member, its two children, and its grandchild), $6.21$ (choosing only the two children and the grandchild), $6.11$ (choosing one child and the grandchild), $6.01$ (choosing only the grandchild), and $6$ (choosing none).

\bigskip

\minisec{Acknowledgements}
We are grateful to David Bevan for his many insightful comments on this work. We are also grateful to the referees for their helpful suggestions.


\end{document}